\documentclass[11pt]{amsart}

\usepackage{geometry}                
\geometry{letterpaper}                   
\usepackage{graphicx,
	amssymb,
	amsmath,
	amsthm,
	bm,
	dsfont,
	epstopdf,
	mathrsfs,
	mathabx,
	multirow,
	mathtools,
	dsfont,
	xcolor,
	tabu,
	subfig,
	hyperref}
\hypersetup{
    colorlinks=true,
    allcolors=blue
}
\definecolor{redy}{rgb}{0.84, 0.48, 0.47}
\definecolor{greeny}{rgb}{0.45, 0.68, 0.49}
\definecolor{bluey}{rgb}{0.42, 0.6, 0.78}
\DeclareGraphicsRule{.tif}{png}{.png}{`convert #1 `dirname #1`/`basename #1 .tif`.png}

\usepackage{amsopn, amsthm, amsgen, amscd,amsmath, amssymb}

\usepackage[all]{xy}

\newtheorem{theorem}[equation]{Theorem}
\newtheorem{proposition}[equation]{Proposition}
\newtheorem{corollary}[equation]{Corollary}
\newtheorem{lemma}[equation]{Lemma} \theoremstyle{definition}
\newtheorem{definition}[equation]{Definition}\theoremstyle{remark}
\newtheorem{remark}[equation]{Remark}
\setcounter{section}{-1}
\numberwithin{equation}{section}

\title[Modular orbifold: Arithmetic and dynamics]{Arithmetic, geometry
and dynamics in the unit tangent bundle of the modular orbifold}
\author{Alberto Verjovsky}
\date{}

\newcommand{\Address}{{
\begin{center}
 \textsc{Instituto de Matem\'aticas, Unidad Cuernavaca\\
Universidad Nacional Aut\'onoma de M\'exico}\\
\end{center}
}}

\begin{document}
\maketitle

\Address

\vspace{12pt}

\noindent \textcolor{orange}{Updated version of the paper of the same title that appeared in:
\noindent Verjovsky, A.  \emph{Arithmetic geometry and dynamics in the unit tangent bundle of the modular orbifold.} Dynamical systems (Santiago, 1990), 263--298, Pitman Res. Notes Math. Ser., 285, Longman Sci. Tech., Harlow, 1993.}\\

\section*{Introduction}
The modular group $\text{PSL}(2,\mathbb{Z})$ and its action on the upper
half-plane $\mathbb{H}$ together with its quotient, the modular orbifold,
are fascinating mathematical objects. The study of modular forms has
been one of the classical and fruitful objects of study. If one
considers the group $\text{PSL}(2,\mathbb{R})$ and we take the quotient
$M=\text{PSL}(2,\mathbb{R})/\text{PSL}(2,\mathbb{Z})$ one obtains a three
dimensional manifold that carries an enormous amount of arithmetic
information. Of course this is not surprising as the elements of
$\text{PSL}(2,\mathbb{Z})$ are M\"obius transformations given by matrices
with columns consisting of lattice points with relatively prime
integer coefficients and therefore $M$ contains information about the
prime numbers. On the other hand from the dynamical systems viewpoint,
$M$ is also very interesting---it has a flow which is an Anosov
flow. The flow as well as its stable and unstable foliations
correspond to three one-parameter subgroups: the geodesic flow, and
the stable and unstable horocycle flows (see the next section). All
these flows preserve normalized Haar measure $\overline{m}$, and are
ergodic with respect to this measure.  By a theorem of Dani \cite{Da}
the horocycle flows have a curve $D(y),\;(y>0)$, of ergodic
probability measures. These ergodic measures are supported in closed
orbits of period $y^{-1}$ of the corresponding horocycle flows. If we
denote by $m_{y}$ the measure corresponding to $y>0$, then $m_{y}$
converges to $\overline{m}$ as $y\to 0$ (\cite{Da} \cite{Za}). We will
also give a proof of this. However what is most interesting for me as a person working on
a dynamical systems is the remarkable connection found by Don Zagier
between the rate of approach of $m_{y}$ to $\overline{m}$ an the
Riemann Hypothesis. Zagier found that the Riemann Hypothesis holds if
and only if for every smooth function $f$ whith compact support on $M$
one has $m_{y}(f)=\overline{m}(f)+o(y^{3/4-\epsilon})$ for all
$\epsilon>0$. We also proved that
$m_{y}(f)=\overline{m}(f)+o(y^{1/2})$. In the Appendix we will review
Zagier's approach as well as the extension given by Sarnak
\cite{Sa}. 

The purpose of this paper is to analyze the dynamics and
geometry of the horocycle flow to show that the exponent $1/2$ is
optimal for certain characteristic functions of sets called
``boxes''. Of course this is very far from disproving the Riemann
Hypothesis since a characteristic function is not even continuous. 

Our result puts in evidence the fact that the Riemann hypothesis is also a regularity problem.
At the end it is the lack of the Riemann-Lebesgue lemma for functions that are not in $L_1$. 
We remark that in Zagier paper \cite{Za} this point is not clarified.

By geometric means we reduce the analysis of the convergence of $m_{y}$
to $\overline{m}$ to a lattice point counting. Thus the fact that the
exponent cannot be made better than $1/2$ is similar to the circle
problem in which we count the number of lattice points with relatively
prime coordinates inside a circle. If instead we take a smooth
function $f$ with compact support and take the sum $\Sigma(y)$, of the
values of the function over all lattice points $(yn,ym)$ with $y>0$
and $(n,m)\in\mathbb{Z}^{2}$ then $y^{2}\Sigma(y)$ will converge to the
integral of $f$ over the plane as $y\to 0$ and the error term will be
$o(y^{\alpha})$, as $y\to 0$, for all $\alpha>0$. This follows by the
Poisson summation formula using the fact that the Fourier transform of
$f$ decays very rapidly at infinity.

We will use many properties of $ \text{SL}(2,\mathbb{R})$, its quotients by
discrete subgroups and the two locally free actions of the proper real
affine group on these quotients. Good references for each subject are
\cite{A}, \cite{G}, \cite{L}, \cite{Ma} and \cite{Ra}.

\section{Preliminares}
Let $\widetilde{G}:=\text{SL}(2,\mathbb{R})$ denote the Lie group of $2\times 2$
matrices of determinant one, with real coefficients. The Lie algebra
of $\widetilde{G}$, $\mathfrak{g}:=\mathfrak{sl(2,\mathbb{R})}$, consists of real $2\times 2$
matrices of trace zero. This Lie algebra has the standard basis:
\[A=\begin{bmatrix}
1/2 & 0 \\ 0 & -1/2
\end{bmatrix},\quad B=\begin{bmatrix}
0 & 1  \\
0 & 0
\end{bmatrix},\quad C=\begin{bmatrix}
0 & 0  \\
1 & 0
\end{bmatrix}.\]
To $A,\; B$ and $C$ correspond the left-invariant vector fields $X,\;
Y$, and $Z$ respectively in $\text{SL}(2,\mathbb{R})$. These vector fields
induce, respectively, the nonsingular flows:
\[g_{t}:\widetilde{G}\to\widetilde{G},\;\; h_{t}^{+}:\widetilde{G}\to\widetilde{G},\;\;h_{t}^{-}:\widetilde{G}
\to\widetilde{G},\;\; t\in \mathbb{R}.\]
Explicitly:
\begin{eqnarray}
\nonumber g_{t}\left(\begin{bmatrix} a & b \\ c & d
\end{bmatrix}\right)&=&\begin{bmatrix}
a & b \\ c & d
\end{bmatrix}\begin{bmatrix}
e^{\frac{t}{t}} & 0 \\ 0 & e^{-\frac{t}{2}}
\end{bmatrix},\\
\label{0.1}h_{t}^{+}\left(\begin{bmatrix}
a & b \\ c & d
\end{bmatrix}\right)&=&\begin{bmatrix}
a & b \\ c & d
\end{bmatrix}\begin{bmatrix}
1 & t \\ 0 & 1
\end{bmatrix},\\
\nonumber h_{t}^{-}\left(\begin{bmatrix} a & b \\ c & d
\end{bmatrix}\right)&=&\begin{bmatrix}
a & b  \\
c & d
\end{bmatrix}\begin{bmatrix}
1 & 0  \\
t & 0
\end{bmatrix},\quad t\in\mathbb{R}.
\end{eqnarray}
To simplify notation, let us write:
$g:=\{g_{t}\}_{t\in\mathbb{R}}$, $h^{+}:=\{h_{t}^{+}\}_{t\in\mathbb{R}},$ and
$h^{-}:=\{h_{t}^{-}\}_{t\in\mathbb{R}}$.

Consider the upper half-plane, $\mathbb{H}=\{z=(x,y):=x+iz\;|\;y>0\}
\subset\mathbb{C}$ equipped with the metric
$ds^{2}=(1/y^{2})(dx^{2}+dy^{2})$.  With this metric $\mathbb{H}$ is the
hyperbolic plane with constant negative curvature minus one.

$\widetilde{G}$ acts by isometries on $\mathbb{H}$ as follows:
\[ \begin{bmatrix}
a & b \\ c & d
\end{bmatrix}\begin{bmatrix}
x \\ y
\end{bmatrix}=\frac{az+b}{cz+d}\]
where $z=x+iy$, $y>0$.

The action is not effective and the kernel is the subgroup of order
two $\{I,-I\}$, consisting of the identity and its negative. 

Let
$G=\widetilde{G}/\{I,-I\}:=\text{PSL}(2,\mathbb{R})$. Note that $G$ is the group of
M\"obius transformations that preserve $\mathbb{H}$ and is in fact its full
group of orientation-preserving isometries.

The action of $G$ on $\mathbb{H}$ can be extended via the differential to
the unit tangent bundle which we shall denote henceforth by
$T_{1}\mathbb{H}$.  If $\gamma\in G$ and $\gamma'$ denotes its differential
acting on unit vectors, we have:
\[
\xymatrix{ T_{1}\mathbb{H} \ar[d]_{P_{1}} \ar[r]^{\gamma'} & T_{1}\mathbb{H}
  \ar[d]^{P_{1}}\\ \mathbb{H} \ar[r]_{\gamma} & \mathbb{H} }
\]
where $P_{1}$ is the canonical projection. Naturally, ``unit vector''
refers to the hyperbolic metric.

By euclidean translation to the origin and clockwise rotation of 90
degrees we have a trivialization $\psi:T_{1}\mathbb{H}\to\mathbb{H}\times S
^{1}$ given by
\begin{equation}\label{0.2}
 \psi(z,v)=(z,-iv/|v|),\quad (i^{2}=-1),
\end{equation}
where $v$ is a hyperbolic unit tangent vector anchored at $z\in
T_{1}\mathbb{H}$. For example, using (\ref{0.2}), $\gamma\mapsto
\gamma'(i,1)$ gives an explicit identification and it will be the one
we will use here.

If $\gamma=
\begin{bmatrix}
  a & b \\ c & d
\end{bmatrix}\in \widetilde{G}$,
we will let $\overline{\gamma}(z)= \frac{az+b}{cz+d}$ denote the
corresponding element in $G$. Using trivialization (\ref{0.2}), we
have
\begin{equation}\label{0.3}
 \gamma'(z,\theta)=(\overline{\gamma}(z),\theta-2\arg(cz+d)).
\end{equation}
In this notation $\theta$ is to be taken modulo $2\pi$, where the
angle of a unit vector is \textit{measured from the vertical
  counter-clockwise}.

The three basic vector fields $X$, $Y$ and $Z$ descend to $G\simeq
T_{1} \mathbb{H}$ and the flows induced by them correspond to the
\textit{geodesic flow, unstable horocycle flow and stable horocycle
  flow}, respectively.

Geometrically these flows can be described as follows. Let $z\in\mathbb{H}$
an let $v_{z}\in T_{1}\mathbb{H}$ be a unit vector based at $z$. This
vector determines a unique oriented geodesic $\gamma$, as well as two
oriented horocycles $C^{+}$ and $C^{-}$ which pass through $z$ are
orthogonal to $\gamma$ and tangent to the real axis. Then
$v':=g_{t}(v_{z})$ is the unit vector tangent to $\gamma$, following
the same orientation as $\gamma$ at the point at distance $t$ from
$z$. The vectors $w^{+}=h_{u}^{+}(v_{z})$ and $w^{-}=h_{v}(v_{z})$ are
obtained by taking unit vectors tangent to $C^{+}$ and $C^{-}$,
respectively, at distances $u$ and $v$ respectively, and according to
their orientations  (see Figure \ref{Figure1}).

\begin{figure}[h]
\begin{center}
  \includegraphics[width=12cm]{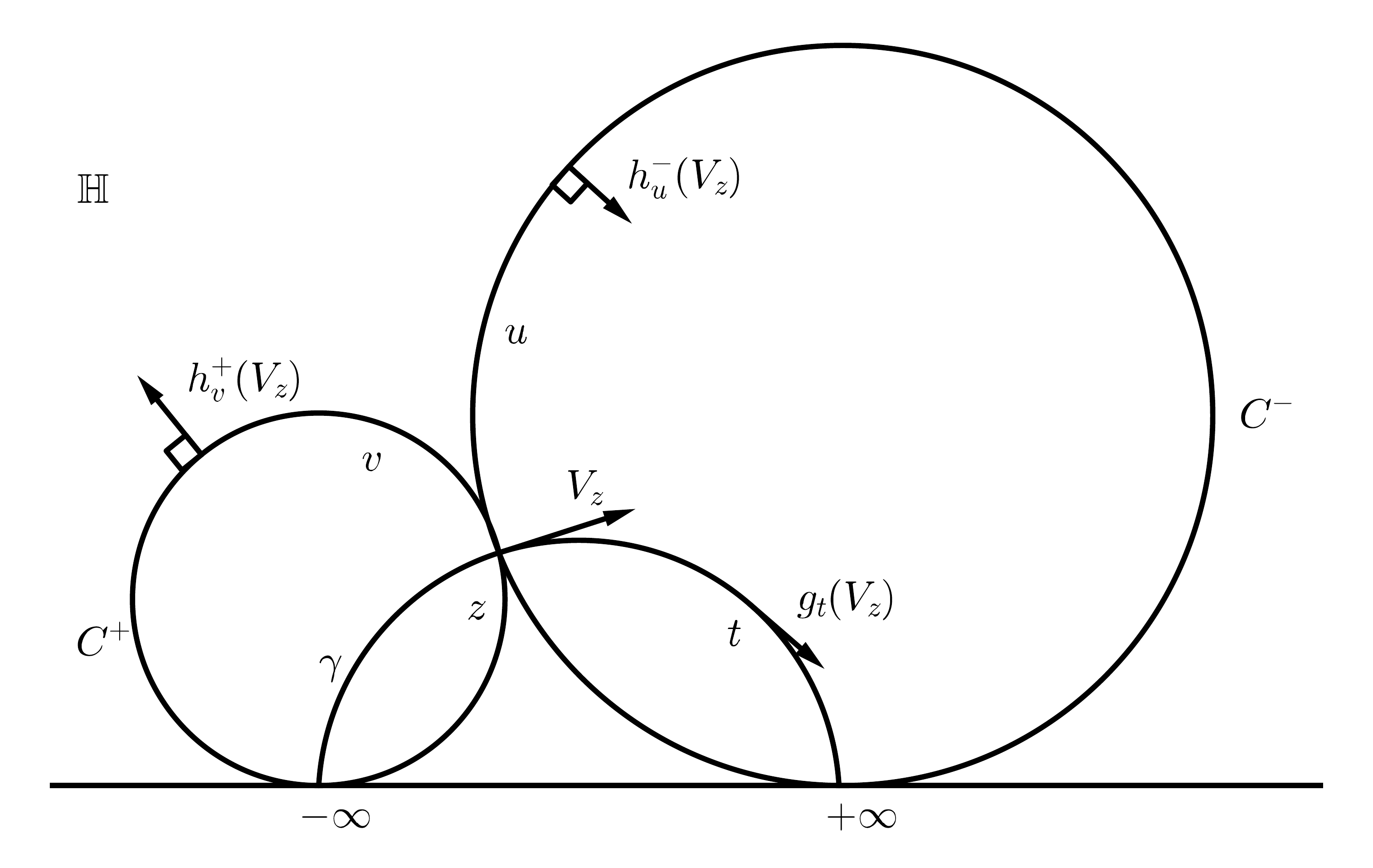}
\caption{}
\label{Figure1}
\end{center}
\end{figure} 

It is because of this geometric interpretation that the flows $g$,
$h^{+}$ and $h^{-}$, defined originally in $\widetilde{G}$ are called \emph{geodesic}
and \emph{horocycle} flows respectively. Formulae (\ref{0.1}) tell us that the
orbits of the  respective flows are obtained by left-translations of the
one-parameter subgroups
\[t\mapsto \begin{bmatrix}
e^{t/2} & 0 \\ 0 & e^{-t/2}
\end{bmatrix},\quad u\mapsto \begin{bmatrix}
1 & u  \\
0 & 1
\end{bmatrix},\quad v\mapsto \begin{bmatrix}
1 & 0  \\
v & 1
\end{bmatrix}.\]
Every noncompact one-parameter subgroup is conjugate in $\widetilde{G}$ to one
of the above, and $h^{+}$ is conjugate to $h^{-}$.

Henceforth we will equip $\widetilde{G}$ with the left-invariant Riemannian
metric such that $\{X,Y,Z\}$ is an oriented orthonormal framing. We
will call this metric the \textit{standard metric}.

By the \textit{standard Riemannian measure} or \textit{Haar Measure},
we will mean the measure, $m$, induced by the volume form $\Omega$ which
takes the constant value one in the oriented framing $\{X,Y,Z\}$. Since
$\widetilde{G}$ is unimodular, the measure $m$ is bi-invariant.

Let $A_{2}(\mathbb{R})$ denote the \textit{proper affine group}:
\[A_{2}(\mathbb{R})=\{T:\mathbb{R}\to\mathbb{R}\;|\;T(r)=ar+b;\;a,b\in\mathbb{R},\;a>0\}.\]

Let us parametrize $A_{2}(\mathbb{R})$ by pairs $(a,b)$ with $a,b\in\mathbb{R}$,
$a>0$.

There are two monomorphisms, $A_{2}(\mathbb{R})\hookrightarrow\widetilde{G}$, given
by:
\begin{equation}\label{0.4}
 (a,b)\mapsto \begin{bmatrix} a & b \\ 0 & a^{-1}
\end{bmatrix},\quad  (a,b)\mapsto \begin{bmatrix}
a & 0 \\ b & a^{-1}
\end{bmatrix}.
\end{equation}
We see from these inclusions that the pairs $\{X,Y\}$ and $\{X,Z\}$
generate Lie algebras isomorphic to the Lie algebra of the affine
group:
\[[X,Y]=Y,\quad[X,Z]=-Z,\quad[Y,Z]=X.\]
As a consequence, $\widetilde{G}$ contains two real analytic foliations by
planes whose leaves are the orbits of the free actions of the affine
group in $\widetilde{G}$ (or simply, the leaves are obtained by
left-translations of the two copies of the affine group). These two
foliations, denote respectively by $\mathcal{F}^{+}$ and $\mathcal{F}^{-}$,
intersect transversely along the orbits of the geodesic flow. The
geodesic flow is an Anosov flow an $\mathcal{F}^{+}$ and $\mathcal{F}^{-}$ are its
\textit{unstable} and \textit{stable} foliations.

Each leaf of $\mathcal{F}^{+}$ and $\mathcal{F}^{-}$ inherits from  the standard
metric, a metric of constant negative curvature equal to minus one. The
geodesic flow permutes the orbits of  $h^{+}$ as well as the orbits of
$h^{-}$. If  dilates uniformly and exponentially the orbits of $h^{+}$
and contracts uniformly and exponentially the orbits of $h^{-}$.

The flows $h^{+}$ and $h^{-}$ in the unit tangent bundle of $\mathbb{H}$ are
conjugate by  the so-called ``flip map'' which sends a unit tangent
vector to its  negative. \textit{therefore, any dynamical or ergodic
property that holds for $h^{+}$ also holds for $h^{-}$}.

For any discrete subgroup $\Gamma\subset G$, the basic vector fields
$X,\;Y$ and $Z$ descend to the quotient
\[M(\Gamma):=\text{PSL}(2,\mathbb{R})/\Gamma,\] and they induce flows which we
will still denote by $g,\;h^{+}$ and $h^{-}$.

The standard metric, invariant form $\Omega$ and Haar measure $m$
descend to $M(\Gamma)$. All three flows $g,\;h^{+}$ and $h^{-}$
defined in $M(\Gamma)$ preserve $m$. Therefore $g$ is a
volume-preserving Anosov flow.

When $\Gamma$ is a discrete, co-compact subgroup, then $g,\;h^{+}$ and
$h^{-}$ are all $m$-ergodic and $g$ is a topologically transitive
Anosov flow. In this case $M(\Gamma)$ is a Seifert bundle over a
compact two dimensional orbifold. The exceptional fibres are due to
the elliptic elements of $\Gamma$. Both foliations $\mathcal{F}^{+}$ and
$\mathcal{F}^{-}$ are transverse to the fibres and every leaf is dense.

When $\Gamma$ is co-compact both $h^{+}$ and $h^{-}$ are minimal flows
on $M(\Gamma)$. In particular, the horocycle flows do not contain
periodic orbits. This was proved by Hedlund \cite{He}. It is also a
consequence of a result of Plante \cite{Pl}. Suppose for instance that
one orbit of $h^{+}$ is not dense. Then the closure of this orbit
contains a non-trivial minimal set $\Sigma$ and Plante showed that
$\Sigma$ must be both a 2-torus and a global cross section for the
geodesic flow, implying that $M(\Gamma)$ would be a torus bundle over
$S^{1}$. But this is impossible since under the hypothesis, $\Gamma$
cannot be solvable.

It was shown by Furstenberg \cite{Fu} that for $\Gamma$ co-compact
both $h^{+}$ and $h^{-}$ are strictly ergodic flows with $m$ as their
unique invariant measure (this also implies minimality of $h^{+}$ and
$h^{-}$).  Since the geodesic flow is transitive, it contains a set of
the second Baire category of dense orbits and it also contains a
countable number of periodic orbits whose union is dense in
$M(\Gamma)$. The number of such periodic orbits as a function of their
periods grows exponentially.  Margulis has a formula relating the
topological entropy of $g$ and the rate of growth of periodic
orbits. This formula was also latter obtained by Bowen. These facts
are interesting because the horocycle flow is the limit of conjugates
of the geodesic flow as can easily be seen by considering the vector
fields $X_{\epsilon}=\epsilon X+Y$ as $\epsilon$ tends to zero.

When $\Gamma$ is a nonuniform lattice (i.e. when $\Gamma$ is discrete,
$M(\Gamma)$ is not compact, and $m(M(\Gamma))<\infty$) the $g,\;h^{+}$
and $h^{-}$ are still $m$-ergodic. However, for nonuniform lattices
both $h^{+}$ and $h^{-}$ are not minimal.

If $\Gamma\subset G$ is any discrete subgroup, then
$\Gamma\backslash\mathbb{H}:=S(\Gamma)$ is a complete hyperbolic orbifold.
If $\Gamma$ is co-compact this means that $S(\Gamma)$ is a compact
surface provided with a special metric and a finite number of
distinguished points or \textit{conical points} labelled by rational
integers. The complement of the conical points is isometric a surface
of curvature minus one (in general incomplete), and of finite area.
Each distinguished point has a neighborhood which is isometric to the
metric space obtained by identifying, isometrically, the two equal
sides of an isosceles hyperbolic triangle. These two equal sides have
to meet at an angle $2\pi p/q$, where $p/q$ is the rational number
attached to the distinguished point.  Of course, the distinguished
points correspond to the equivalence of fixes points of elliptic
elements of $\Gamma$.  $S(\Gamma)$ is obtained by identifying sides of
a fundamental domain which is a finite hyperbolic polygon in $\mathbb{H}$
by elements of $\Gamma$. 

A classical theorem of Selberg (which also
holds for $\text{PSL}(2,\mathbb{C})$) asserts that $\Gamma$ contains a
subgroup $\widetilde{\Gamma}$ of finite index and without elliptic
elements. Then $S(\widetilde{\Gamma})$ is a branched covering of $S(\Gamma)$
and it is a complete hyperbolic surface without conical points and
finite area. In the unit tangent bundle
$T_{1}(S(\widetilde{\Gamma}))=G/\widetilde{\Gamma}$ we have the geodesic and
horocycle flows and there is a finite covering map from
$T_{1}(S(\widetilde{\Gamma}))$ onto $G/\Gamma=M(\Gamma)$ whose deck
transformations commute with the three flows. In this way, for any
discrete $\Gamma$, we can now speak of the horocycle and geodesic
flows on an orbifold. $M(\Gamma)$ plays the role of the unit tangent
bundle of $S(\Gamma)$ when $\Gamma$ has elliptic elements.

If $\Gamma$ is a nonuniform lattice theh $S(\Gamma)$ is a noncompact,
complete, hyperbolic orbifold of finite area. The orbifold is obtained
by identifying isometrically, pairs of sides of an \textit{ideal}
hyperbolic polygon of finite area. It then follows that the
fundamental polygon has a finite number of sides and there must be at
least one vertex at infinity. Each vertex at infinity is the common
end point of two asymptotic geodesics which are identified by a
parabolic element of $\Gamma$ (see Figure \ref{Figure2}).

\begin{figure}[h]
\begin{center}
  \includegraphics[width=10cm]{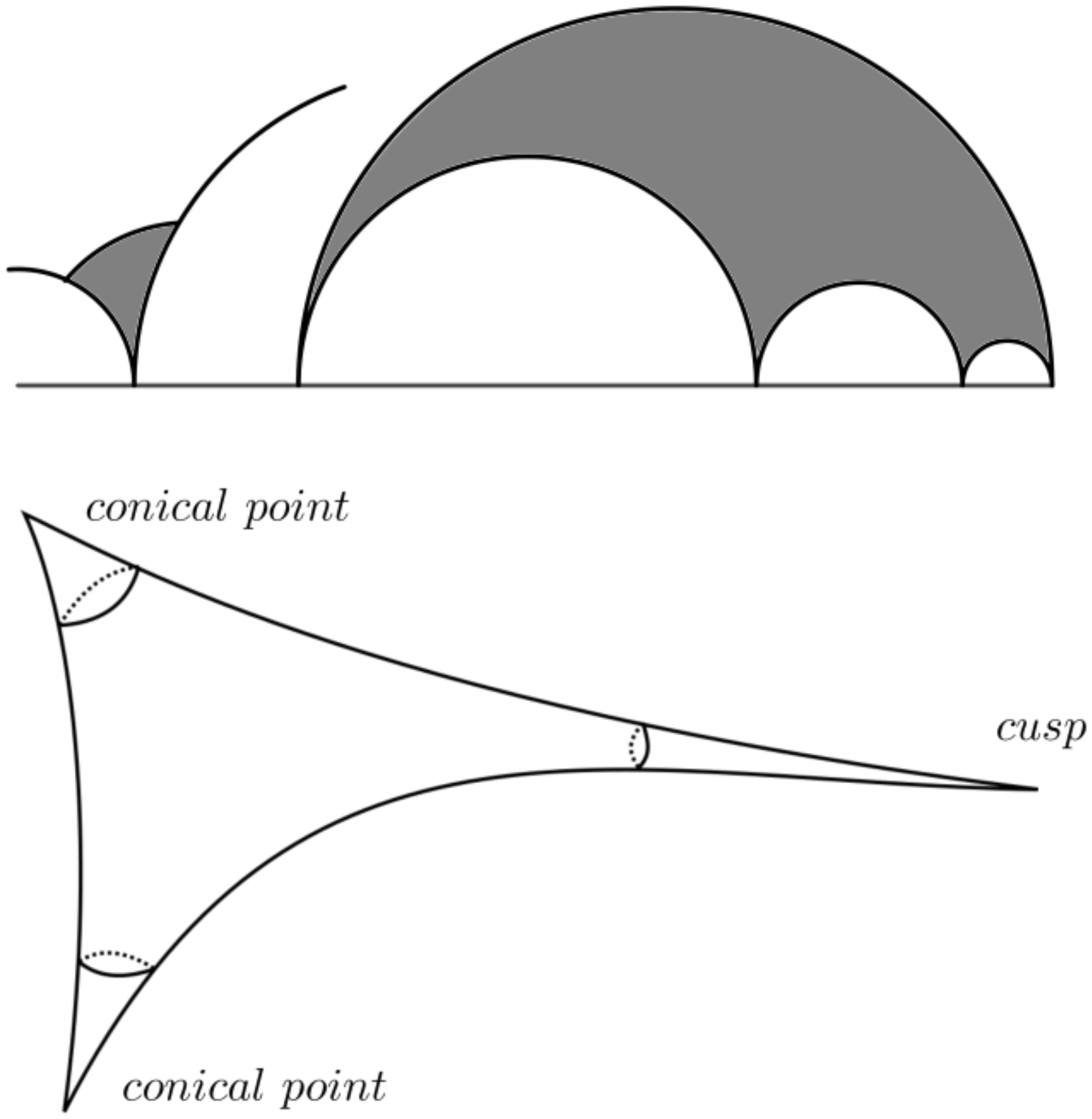}
\caption{}
\label{Figure2}
\end{center}
\end{figure}

In this way we obtain a ``cusp" for each equivalence class under
$\Gamma$ of the set of vertices at infinity. Also we obtain a
one-parameter family of closed horocycles in $S(\Gamma)$.

The orbifold $S(\Gamma)$ has a finite number of cusps and conical
points. If we compactify $S(\Gamma)$ by adding one point at infinity
for each cusp we obtain a compact surface $S(\Gamma)$. It is natural
to label these new points by the symbol $\infty$ (or by $\infty_{c}$
if we want to specify the cusp $c$) even though the ``angle'' at a
cusp is zero. 

The genus, area, number of cusps and conical points are
related by a Gauss-Bonnet type of formula.

In this paper the most important orbifold will be the \textit{ modular
  orbifold} which correspond to $\Gamma=\text{PSL}(2,\mathbb{Z})$. Its area
is, by Gauss-Bonnet formula, equal to $\pi/3$. An important number for
us will be $\pi^{2}/3$ which is the volume of $\text{PSL}(2,\mathbb{R})/
\text{PSL}(2,Z)$.\\

\noindent\textbf{Notation.} $M:=M(\text{SL}(2,\mathbb{Z}))=\text{PSL}(2,\mathbb{R})/
\text{PSL}(2,\mathbb{Z})$. \\

The modular orbifold is obtained from the standard modular fundamental
domain by identifying sides as shown in figure \ref{Figure3}.\\

\begin{figure}[h]
\begin{center}
  \includegraphics[width=12cm]{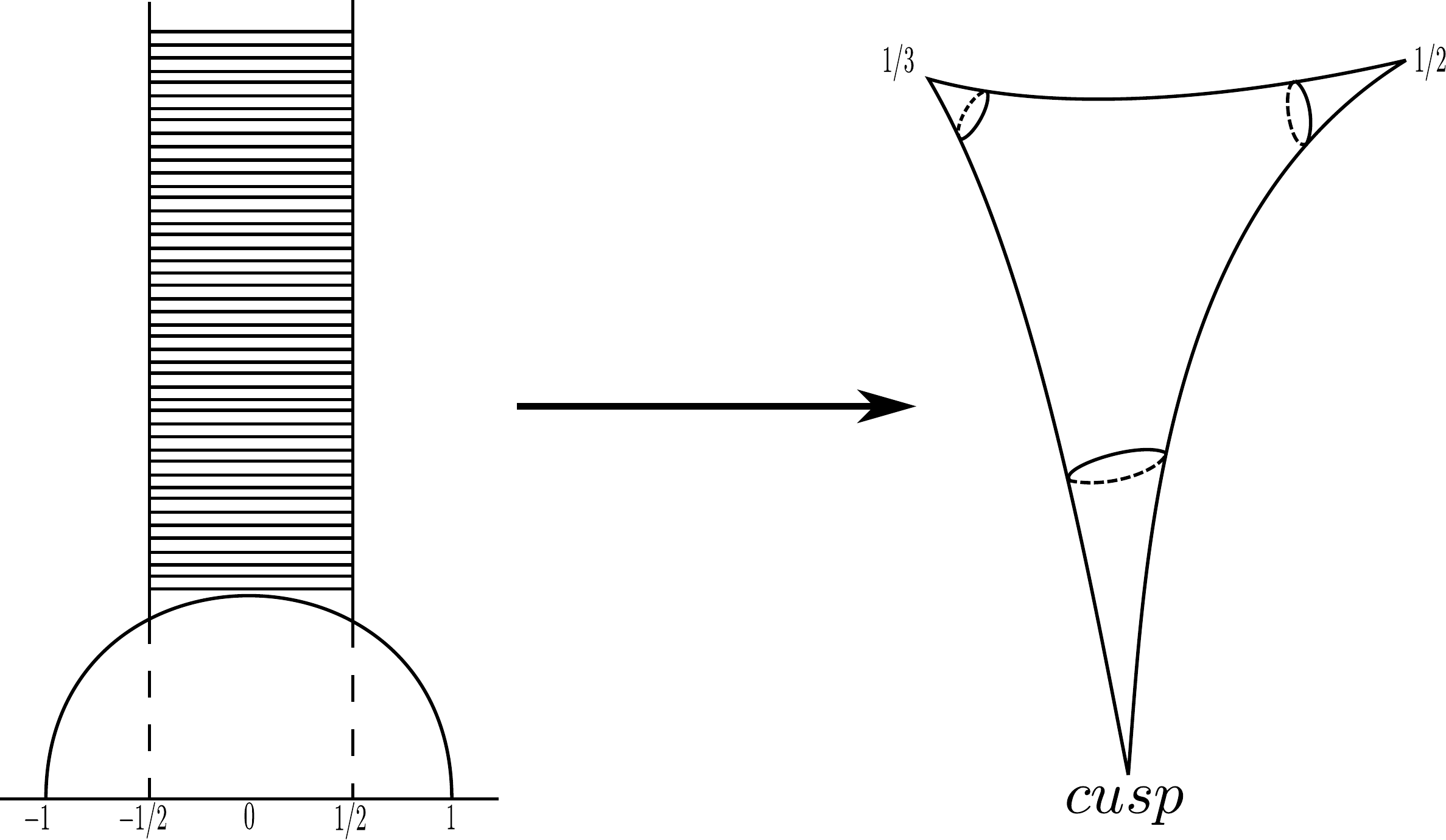}
\caption{}
\label{Figure3}
\end{center}
\end{figure}

The modular orbifold has just one cusp and two conical points with
labels $\tfrac{1}{2}$ and $\tfrac{1}{3}$. $M$ is a Seifert bundle over
the modular orbifold with two exceptional orbits corresponding to the
conical points. Both foliations $\mathcal{F}^{+}$ and $\mathcal{F}^{-}$ have dense
leaves. Let $\Gamma\subset G$ be any nonuniform lattice. Assume for
simplicity that $\Gamma$ does not have elliptic elements. Consider all
asymptotic geodesics corresponding to a given cusp. The union of all
these geodesic covers the orbifold $S(\Gamma)$. In fact, given any
points of $S(\Gamma)$ there exists a countable dense set of unit
tangent vectors at this point so that if we take a semi-geodesic
starting at the point and tangent to the one of these vectors, then it
will converge to the cusp. In the case of the modular group the angle
between any two of these vectors at any point where two such
semi-geodesics meet is a rational multiple of $2\pi$. \textit{ This is
  one of the reasons why the modular orbifold carries so much
  arithmetical information}. The horocycles corresponding to the cusp
are regularly immersed closed curves in $S(\Gamma)$. ``Near'' the cusp
all such horocycles are embedded. A necessary condition for such a
closed horocycle to be embedded is that its hyperbolic length is
sufficiently small. However when the hyperbolic lengths of these
closed horocycles are big, they are no longer embedded and they
self-intersect. 

\noindent The number of self-intersections grows as their
lengths grow and they tend to ``fill up'' all to $S(\Gamma)$. See figure \ref{Figure4}).\\

\begin{figure}[h]
\begin{center}
  \includegraphics[width=8cm]{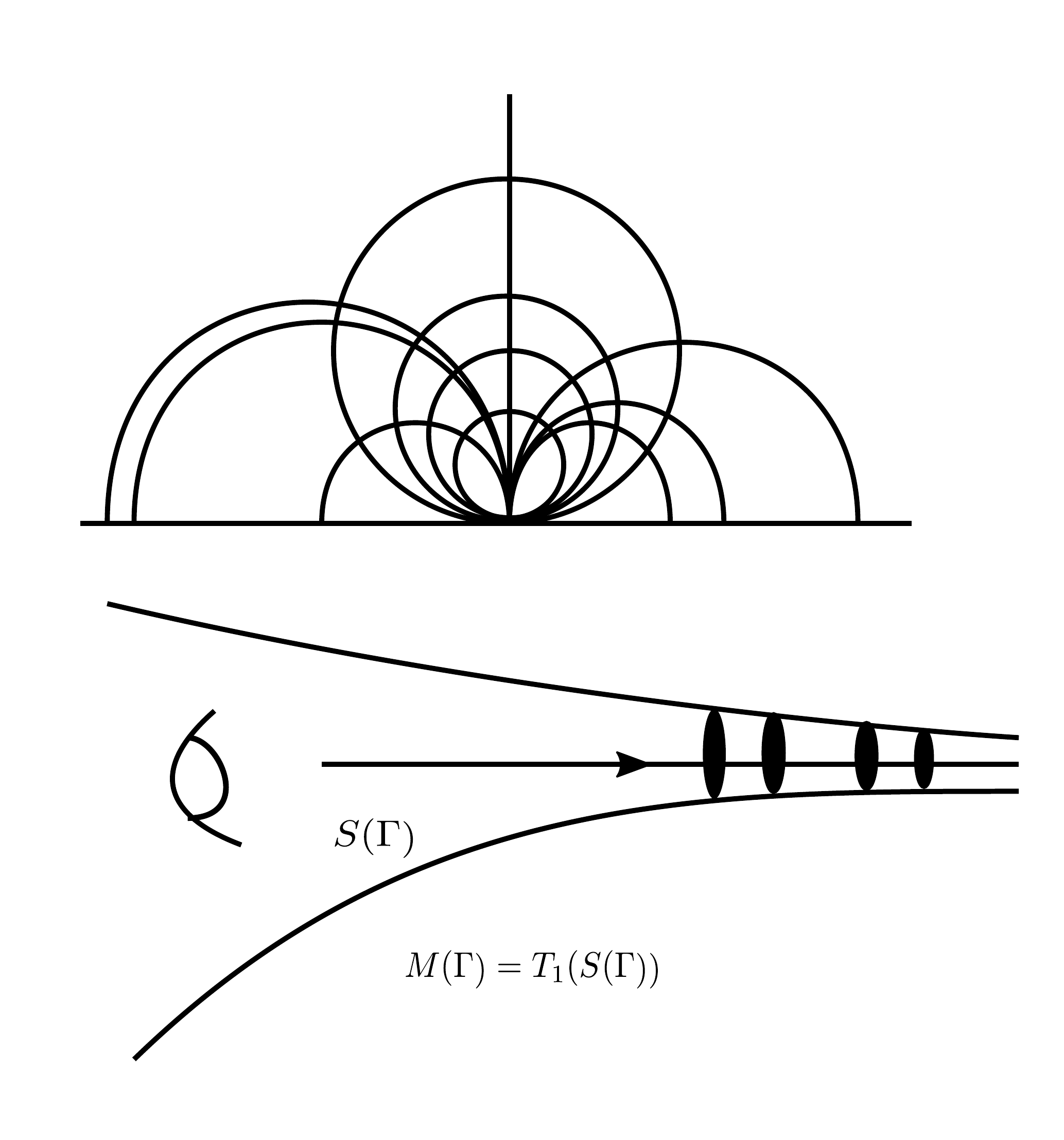}
\caption{}
\label{Figure4}
\end{center}
\end{figure} 

We can assume by conjugation in $G$, that the cusp is the point at
infinity in the upper half-plane so that the family of asymptotic
geodesics associated to the cusp is the family of parallel vertical
rays oriented in the upward direction. By conjugation we may also
assume that the parabolic subgroup associated to the cusp in the
following subgroup: \[\Gamma_{\infty}= \left.\left\{\pm\begin{bmatrix}
1 & n \\ 0 & 1 \end{bmatrix}\right|\ n\in\mathbb{Z}\right\}\] or
equivalently, the group of translations: $T_{n}(z)=z+n, \; n\in\mathbb{Z}$.
When the above happens we say that the point at infinity is the
\textit{standard cusp}. The modular group $\text{PSL}(2,\mathbb{Z})$ has the
standard cusp at infinity.

The horocycles corresponding to the standard cusp at infinity are the
horizontal lines in the upper half-plane. By formula (\ref{0.3}) we
see that the points $((x,y),\pi)$ and $((x+1,y),\pi)$ are identified
by the differential of elements of $\Gamma_{\infty}$. Therefore,
associated to the standard cusp there exists a one-parameter family of
periodic orbits of $h^{+}$ in the unit tangent bundle of $S(\Gamma)$:
\[\gamma_{y}=\{((x,y),\pi)\in T_{1}(S(\Gamma))\;|\;0\leq x\leq
1)\}\quad (y>0).\] The periodic orbit $\gamma_{y}$ has period (or
length) $1/y$.

In general since any cusp can be taken to the standard cusp at
infinity, we see that associated to the each cusp, there exists a
one-parameter family of closed orbits for the horocycle flow where
the natural parameter is the minimum positive period. Conversely,
given any periodic orbit $\gamma$ of $h^{+}$, then $\gamma$ is
included in such one-parameter family. This is so since the diameter
of $g_{-t}(\{\gamma\})$ tends to zero as $t$ tends to infinity and
thus it tends towards the point at infinity to some cusp. Exactly the
same reasoning plus the fact the $h^{+}$ is ergodic with respect to
$m$ implies that the only minimal subsets of $h^{+}$ are $M(\Gamma)$
and horocycle periodic orbits. Using the ``flip'' map we see that
everything we just said is also true for $h^{-}$.

As a consequence of the above remarks, we obtain that when $\Gamma$ is
a nonuniform lattice, there exist injective and densely immersed
cylinders $C_{i}$, $i=1,\dots,r$, where $r$ is the number of cusp.
These cylinders are mutually disjoint and their union comprises the
totality of periodic orbits of $h^{+}$. Naturally, these cylinders are
distinct leaves of the unstable foliation of the geodesic flow.
Everything that we have discussed so far follows by analyzing the
locally free actions of the affine group on $\text{SL}(2,\mathbb{R})$ and on
its quotients by discrete subgroups.

For nonuniform lattices $\Gamma$, the horocycle flows $h^{+}$ and
$h^{-}$ are not uniquely ergodic in $M(\Gamma)$ since both contain
periodic orbits. For each periodic orbit $\gamma$ of $h^{+}$ let
$m(\gamma)$ denote the Borel probability measure which is supported in
$\gamma$ and which is uniformly distributed with respect to its
arc-length i.e., if $f:M(\Gamma)\to\mathbb{R}$ is a continuous function, then
\[\langle m(\gamma),f\rangle:=m_{\gamma}(f)=
\frac{1}{T}\int_{0}^{T}f(h_{t}^{+}(x))dt,\] where $x\in\gamma$ is any
point in $\gamma$ and $T$ is the period of $\gamma$. It is evident that
$m(\gamma)$ is $h^{+}$-invariant and it is ergodic for $h^{+}$.

A theorem of Dani \cite{Da} asserts that the normalized standard
Riemannian measure and measures of type $m(\gamma)$ are the only
ergodic probability invariant measure for $h^{+}$. It also follows
from Dani's work that for a nonuniform lattice, an orbit of $h^{+}$
is either dense or else it is a periodic orbit.

Suppose that $\Gamma$ has only one cusp and that this cusp is the
standard cusp at infinity. Let $\widehat{M}(\Gamma)$ denote the
one-point compactification of $M(\Gamma)$. Let us extend $h^{+}$ by
keeping the point at infinity fixed. Let $m_{y}$ denote the ergodic
measure concentrated in the unique periodic orbit of period $1/y$. Let
$\delta_{\infty}$ denote the Dirac measure at the point at infinity.
Another result of Dani is that $m_{y}$ converges weakly to the
point-mass at infinity as $y\to\infty$ and $m_{y}$ converges weakly to
the normalized Haar measure $\overline{m}$, as $y\to 0$. The geometric
significance of this result is clear: as the period decreases, the
periodic orbit becomes smaller and tends to the cusp. On the other
hand, as the period increases, the horocycle orbit gets longer and
wraps around $M(\Gamma)$: it is almost dense. The latter case means
that as the period grows the horocycle orbits tend to be uniformly
distributed with respect to the normalized Haar measure.

Let $C^{0}(\widehat{M}(\Gamma))$ denote the Banach space of continuous
real-valued functions of $\widehat{M}(\Gamma)$, with the sup norm.
Let $C^{*}=[C^{0}(\widehat{M}(\Gamma))]^{*}$ denote its topological
dual with the $\text{weak}^{*}$-topology. Let
\[D:\mathbb{R}_{\bullet}^{+}\to C^{*};\quad D(y)=m_{y},\quad (y>0)\] where
$\mathbb{R}_{\bullet}^{+}$ denotes the multiplicative group of positive
real numbers.\\

\noindent\textbf{Dani's Theorem.} \cite{Da}. \textit{The measure
  $m_{y}$ converges in the weak$^{*}$-topology to the normalized Haar
  measure $\overline{m}$ as $y\to 0$. The only ergodic measures of
  $h^{+}$ are $D(y)$ for $y>0$, and $\overline{m}$}.\\

The $\text{weakly}^{*}$-compact convex envelope of the image of the
curve $D$ is the set of all invariant probability measures of
$h^{+}$.

The rate of approach of $m_{y}$ to the Haar measure $\overline{m}$
(as $y\to 0$) is intimately related to the Riemann Hypothesis. Don
Zagier \cite{Za} found a remarkable connection between the Riemann
hypothesis and the horocycles in the orbifold
$S(\text{PSL}(2,\mathbb{Z}))=\text{PSL}(2,\mathbb{Z})\backslash\mathbb{H}$ (i.e., in the
modular orbifold). Here we will use a particular case of Sarnak's
result \cite{Sa} (which generalizes the previously mentioned theorem
of Zagier) for the case $M=\text{PSL}(2,\mathbb{R})/\text{PSL}(2,\mathbb{Z})$:\\

\noindent\textbf{Theorem (Zagier).} \textit{Let $f$ any $C^{\infty}$
  function defined on $M$ and with compact support. Then:
\begin{equation} \label{0.5}
 m_{y}(f)=\overline{m}(f)+o(y^{1/2}),\quad (\text{as}\; y\to 0).
\end{equation}
Furthermore, the above error term can be made to be
$o(y^{3/4-\epsilon})$ for even $\epsilon>0$ if and only if the Riemann
Hypothesis is true}.\\

P. Sarnak \cite{Sa} developed Zagier's result to other discrete
subgroups of $\text{PSL}(2,\mathbb{R})$, namely nonuniform lattices. It will
be clear that many of the ideas contained in the present paper (which
incidentally, are reasonably simple and geometric) could be applied
to these more general cases, at least when the nonuniform lattices
are arithmetic (for example, for congruence subgroups).

In the present paper we will show that the exponent $1/2$ in the error
term of (\ref{0.5}) is optimal for certain characteristic functions
$\chi_{U}$. Namely, we will prove the following (see Theorem
\ref{Theorem 3.22}): \\

\noindent\textbf{Theorem.} \textit{There exists an open set $U\subset
M$ and a positive constant
$K>0$ depending only on $U$ such that:
\begin{equation}\label{0.6}
|m_{y}(\chi_{U})-\overline{m}(\chi_{U})|\leq Ky^{1/2}|\log y|,\quad
\text{for}\;\; 0<y\leq 1/2.
\end{equation}
Furthermore, if $\alpha>1/2$, then}\\
\noindent \text{(0.6')}\[\hspace{1.31 cm}\varlimsup_{y\to
0}[|m_{y}(\chi_{U})-\overline{m}(\chi_{U})|y^{-\alpha}]=\infty.\]\\

\noindent\textbf{Corollary.} \textit{The measure $m_{y}$ becomes
  uniformly distributed with respect to Haar measure as $y\to 0$:
\[\lim_{y\to 0}m_{y}(f)=\overline{m}(f)\quad \forall f\in
C^{0}(\widehat{M}(\Gamma)).\]
(See also \cite{Da} and \cite{Za}.)}\\

\noindent\textit{Remark.} Equality $(0.6')$ does not imply that the
Riemann Hypothesis is false since a characteristic function is not
even continuous.

\section{Geodesic and horocycle flows}
Let $U\subset M$ be a nonempty open set. Let $\gamma_{y}$ be the
horocycle orbit of period $1/y$. Then $\gamma_{y}\cap U$ is a
(possibly infinite) union of open intervals in $\gamma_{y}$ and
$m_{y}(U)$ is the sum of the lengths of these intervals divided by the
length of $\gamma_{y}$. For arbitrary $U$, to estimate how this grows
as $y\to 0$ seems an impossible task. However, if we take special open
sets this is indeed possible.

By a \textit{standard box} (or simply a \textit{box}) we will mean an
open set in $M$ which consists of the union of segments of orbits of
$h^{+}$ of equal length and whose middle point is contained in a open
``square'' $B$ (called the \textit{base} of the box). The base $B$ is
contained in a stable leaf $L^{-}\in\mathcal{F}^{-}$, of the stable
foliation of the geodesic flow. The boundary of $B$ consists of two
segments of geodesic orbits and two segments of stable horocycle
orbits. See \ref{Figure5}).\\

\begin{figure}[h]
\begin{center}
  \includegraphics[width=8cm]{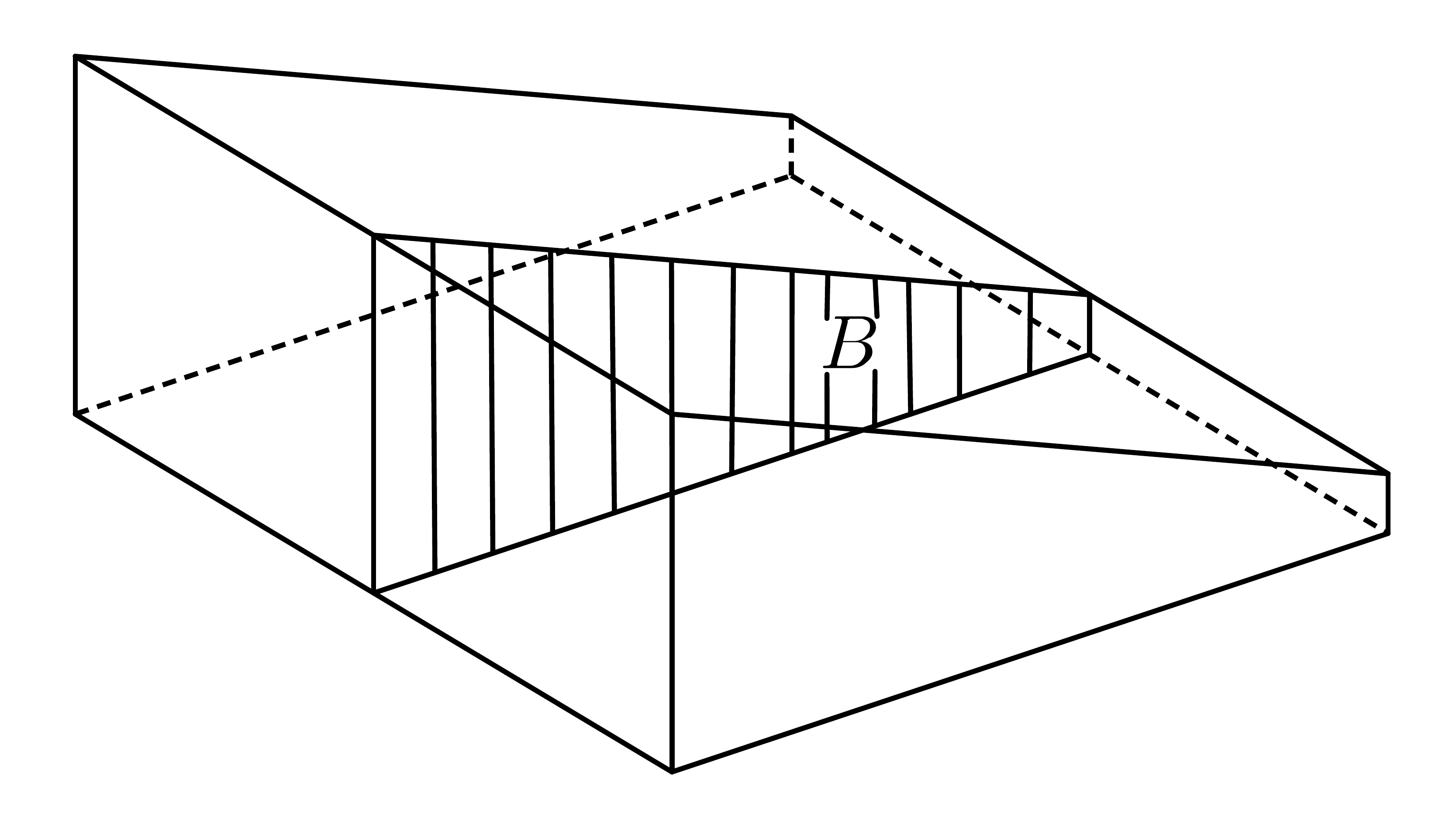}
\caption{}
\label{Figure5}
\end{center}
\end{figure}

The base and the common length can be chosen small enough so as to
have an embedded cube. In order to compute the volume (with respect to
the normalized Haar measure) of a standard box it is better to take a
lift of the box in the covering $\text{SL}(2,\mathbb{R})$, compute the volume
there, and then normalize. Let $C\subset\text{SL}(2,\mathbb{R})$ be a box,
let $\ell$ be the common length of the unstable segments of the box
(to be called the ``height'' of the box). Denote by $A$ the hyperbolic
area of the base $B$. Then, as we will show
latter: \[m(C)=A\ell=(\text{area of base})\times\text{``height''}.\] A
formula reminiscent of our elementary school days!

Therefore, if $C$ is a box in $M$ we obtain by normalization:
\[\overline{m}(C)=\frac{3}{\pi^{2}}(A\ell).\] The set of all boxes in
$M$ is a basis of the topology of $M$ and generates the
$\sigma$-algebra of its Borel sets. Also, the image under the geodesic
flow of a standard box is another standard box of equal volume (notice
that a standard box is a foliated chart for the unstable foliation but
not for the stable foliation).

Let $\gamma_{0}$ be the \textit{basis unstable horocycle periodic
orbit}, namely, the unique closed orbit of \[h_{t}^{+}:M\to M\] which
has period one. Then $\gamma_{t}:=g_{t}(\gamma_{0})$, as $t$ runs over
the real numbers, is the set of all closed orbits of $h^{+}$. Cleary,
$\gamma_{t}$ has length and period $e^{t}$. Therefore, we have the
change of parameter $y=e^{-t}$.

As $t$ grows, $\gamma_{t}$ becomes longer and starts ``filling up''
$M$, and it tends to be uniformly distributed with respect to the Haar
measure. Let $C\subset M$ be a box with base $B$ and let $A$ and
$\ell$ denote the area of the base and the height of $C$ respectively.
By the ergodic theorem no matter how small $C$ is, at some time $T>0$,
$\gamma_{t}$ will intersect $C$ for all $t>T$.

We have the following formula:
\begin{equation}\label{1.1}
 m_{y}(C)=n(y,C)y\ell;\quad y=e^{-t}.
\end{equation}
For all real $t$, $\gamma_{t}$ intersects $B$ transversally in a
finite number of points $n(y,C)$--the number that appears in the above
formula. This formula is evident: $\gamma_{t}\cap C$ is a finite
disjoint union if intervals of equal length $\ell$. The number of
these intervals is precisely $n(y,C)$ and we must divide by the length
of $\gamma_{t}$ which is $y^{-1}=e^{t}$ (see Figure \ref{Figure6}). \\

\begin{figure}[h]
\begin{center}
  \includegraphics[width=8cm]{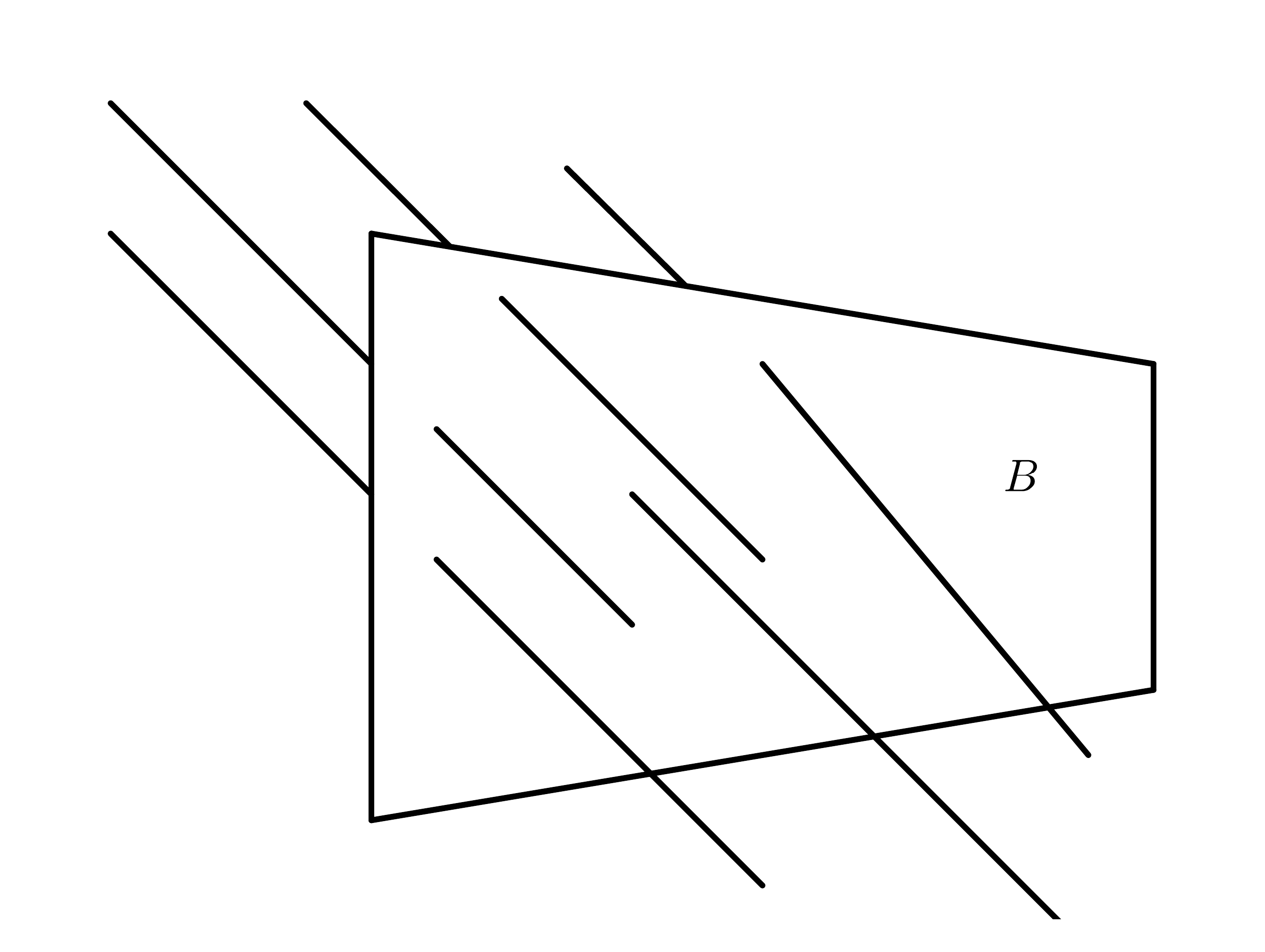}
\caption{}
\label{Figure6}
\end{center}
\end{figure}

Figure 6 suggests Amp\`ere's Law of electromagnetism. Let us imagine a
steady unit current flowing around the horocycle orbit $\gamma_{t}$
then the normalized integral of the magnetic field induced in the
boundary of the square is $n(y,C)$.

There is another way in which we can compute $m_{y}(C)$. Let
$C_{t}=g_{-t}(C)$, $(t>0)$, be the image of a standard box by the
geodesic flow reversing time. Then, since the geodesic flow preserves
$\overline{m}$, we have:
\begin{equation}\nonumber
\begin{cases}
g_{t}(C_{t}\cap\gamma_{1})=C\cap\gamma_{t}\\ \overline{m}(C_{t})=\overline{m}(C);\quad
t\in\mathbb{R}.
\end{cases}
\end{equation}

Let $B(t)$ be the base of $C_{t}$ and let $A(t)$ be its area. Let
$\ell(t)$ be the height of $C_{t}$. Then:
\[A(t)=e^{t}A(0)=e^{t}A,\]
where $A$ is the area of the base of $C$. We also have:
\[\ell(t)=e^{-t}\ell(0)=e^{-t}\ell,\] where $\ell$ is the height of $C$.

So as $t$ goes to $\infty$, the box $C_{t}$ becomes very thin and the
area of its base grows exponentially with $t$. There exists $T>0$ such
that $C_{t}$ intersects the basic horocycle orbit $\gamma_{0}$ for
all $t>T$. When this happens, $C_{t}\cap\gamma_{0}$ is a finite union
of intervals in $\gamma_{0}$ of equal length $\ell e^{-t}$. The number
of such intervals is $n(y,C)$ (see Figure \ref{Figure7}).\\

\begin{figure}[h]
\begin{center}
  \includegraphics[width=8cm]{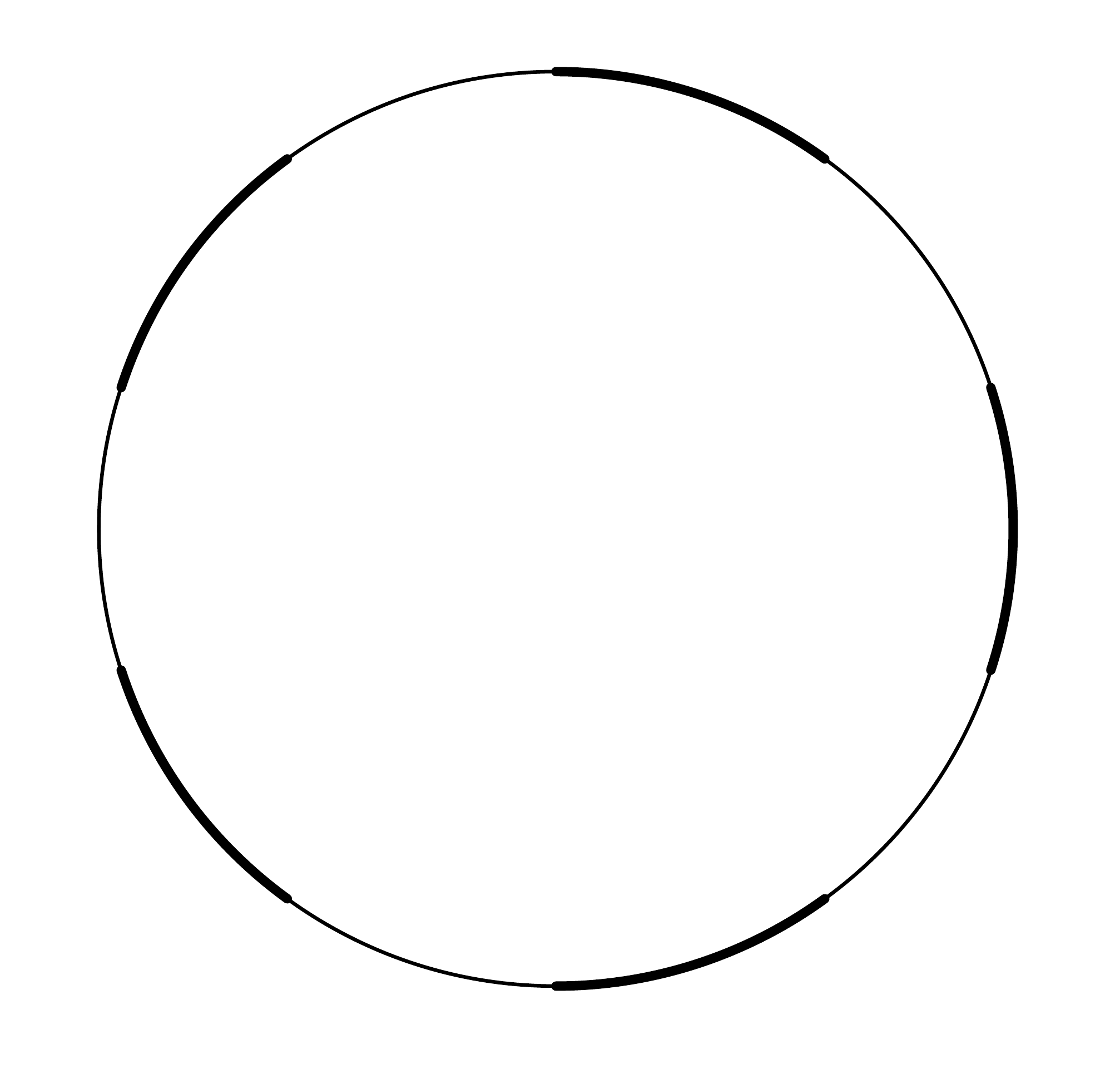}
\caption{}
\label{Figure7}
\end{center}
\end{figure}

Let $J(y)=\{J_{1}(y),J_{2}(y),\dots,J_{n(y,C)}(y)\}$ be this
collection of open intervals of equal length. The distribution of
$J(y)$ in the basic horocycle orbit looks seemingly random. This is
not exactly the case: the midpoints of the intervals are like a
circular Farey sequence of order $y^{-1}$. The connection between the
growth and distribution of circular Farey sequences and the Riemann
Hypothesis are well-known theorems of Franel \cite{Fr} and Landau
\cite{Lan} (see also Edwards \cite[p. 263]{E}). The problem of
understanding the distribution of $J(y)$ can be viewed as a problem of
equidistribution in the sense of Hermann Weyl.

We shall prove later that $J(y)$ has the pattern just described. In
fact, we will show:
\begin{equation}\label{1.2}
 n(y,C)=\frac{3}{\pi^{2}}y^{-1}A+O(y^{1/2}\log y)\quad (\text{as}\;
 y\to 0).
\end{equation}
Therefore, we see from (\ref{1.2}) that:
\[\lim_{y\to 0}[yn(y,C)]=\frac{3}{\pi^{2}}A.\]
Using (\ref{1.1}) we obtain:
\begin{equation}\label{1.3}
 \lim_{y\to 0}m_{y}(C)=\frac{3}{\pi^{2}}A\ell=\overline{m}(C).
\end{equation}
Formula (\ref{1.3}) implies that $m_{y}$ converges vaguely to
$\overline{m}$ as $y\to 0$.

However, what is important for us is that the exponent $1/2$ in the
error term in (\ref{1.2}) is optimal. This will follow from the fact
that $n(y,C)$ grows as a classical arithmetic function: the summatory
of Euler's $\varphi$ function:
\[\Phi(N)=\sum_{i=1}^{N}\varphi(i);\quad \Phi(r)=
\sum_{n\leq r}\varphi(n)\quad\text{if}\;\; r\in\mathbb{R},\; r\geq 1.\]
A classical theorem of Mertens (1874) shows that:
\[\Phi(r)=\frac{3}{\pi^{2}}r^{2}+0(r\log r),\quad\text{as}\;
r\to\infty.\] We will apply Merten's theorem to prove:

\begin{theorem}\label{T1.4} (Compare with Theorem \ref{Theorem 3.18})
  With the above notation we have,
  \[n(y,C)\sim\Phi(y^{-1/2}),\]
  where $\sim$ denotes asymptotic equivalence as $y\to 0$.
\end{theorem}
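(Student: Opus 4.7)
The plan is to reduce $n(y,C)$ to a count of coprime lattice points in a disc of radius $y^{-1/2}$, after which Mertens' theorem $\Phi(N)=(3/\pi^{2})N^{2}+O(N\log N)$ yields the asymptotic.

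Recall from the discussion preceding the theorem that $n(y,C)$ equals the number of components of $C_t\cap\gamma_0$ in $M$, where $\gamma_0$ is the basic periodic horocycle (at height $1$ in $\mathbb{H}$, of period $1$) and $C_t=g_{-t}(C)$ has base of hyperbolic area $A/y$ expanded along a stable horocycle and unstable thickness $\ell y$. Lift to the universal cover $T_1\mathbb{H}\simeq G$ and fix a lift $L_\infty$ of $\gamma_0$, namely the horocycle $\{\mathrm{Im}(z)=1\}$ with upward unit vectors. The $\mathrm{PSL}(2,\mathbb{Z})$-orbit of $L_\infty$ is precisely the family of Ford horocycles: for each coprime pair $(p,q)\in\mathbb{Z}^{2}$, one obtains $L_{p/q}$, the horocycle tangent to the real axis at $p/q$ with Euclidean diameter $1/q^{2}$. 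Dually, $n(y,C)$ equals the number of Ford horocycles that meet a fixed lift $\widetilde{C}_t$.

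The key geometric fact is that $L_{p/q}$ attains its maximum Euclidean height at exactly $1/q^{2}$. Since $\widetilde{C}_t$ lies at heights of order $y$, only those $L_{p/q}$ with $1/q^{2}\geq y$ can meet it --- equivalently $q\leq y^{-1/2}$. Conversely, for each such $q$ and each coprime residue $p\pmod{q}$, an elementary calculation of the intersection equation --- equating the matrix form of a point of $\widetilde{C}_t$ with an element of $\mathrm{PSL}(2,\mathbb{Z})$ whose first column encodes the pair $(p,q)$ --- shows that as the stable parameter sweeps $\widetilde{B}_t$ over an arclength of order $y^{-1}$, it passes through each admissible coprime residue exactly once, producing precisely one transverse intersection. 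Summing over $q\leq y^{-1/2}$ yields
\[
n(y,C)=\sum_{1\leq q\leq y^{-1/2}}\varphi(q)+E(y)=\Phi(y^{-1/2})+E(y),
\]
where $E(y)=O(y^{-1/2}\log y^{-1})$ absorbs boundary contributions from Ford horocycles with $q$ close to $y^{-1/2}$ (for which only part of a residue class falls inside $\widetilde{C}_t$) and from the thin unstable direction. Mertens' theorem gives $\Phi(y^{-1/2})\sim(3/\pi^{2})y^{-1}$, dominating $E(y)$, and establishing $n(y,C)\sim\Phi(y^{-1/2})$.

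The main technical obstacle is the geometric-arithmetic bookkeeping just sketched: verifying that each admissible coprime class $(p,q)$ with $q<y^{-1/2}$ produces exactly one transverse intersection in $\widetilde{C}_t$, and that the finite unstable thickness $\ell y$ neither destroys genuine intersections nor generates spurious ones. Once this is in place, the asymptotic equivalence is immediate from Mertens' estimate.
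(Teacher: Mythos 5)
Your strategy --- lift to the universal cover, identify $n(y,C)$ with the number of Ford horocycles whose highest points land in the shrunk base, reduce this to a count of coprime pairs $(p,q)$, and then invoke Mertens --- is the same reduction the paper makes. The paper carries out the lattice-point bookkeeping via the map $\Psi(u,v)=u/v+v^{-2}i$ of formula (\ref{equation10}), which sends the coprime pair $(a,c)$ precisely to the highest point $a/c+c^{-2}i$ of the Ford circle tangent at $a/c$, converting the constraint ``highest point lies in the base rectangle $[\alpha_1,\alpha_2]\times[\beta_1 y,\beta_2 y]$'' into a coprime-lattice count over a euclidean trapezium $\Delta(y^{-1/2})$; Theorem \ref{mertens} and Corollary \ref{corollary3.5} (Mertens with explicit error constants, adapted to trapezia) then give the asymptotic. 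Your bookkeeping is looser on two points that do not affect the order of growth but do affect the constant. First, the height condition on the Ford circle is two-sided, $1/q^{2}\in[\beta_1 y,\beta_2 y]$, so the admissible $q$ range is the band $(\beta_2 y)^{-1/2}\leq q\leq(\beta_1 y)^{-1/2}$ rather than all $q\leq y^{-1/2}$: Ford circles whose tops lie above the base do not contribute. Second, the tangency point $p/q$ must lie in $[\alpha_1,\alpha_2]$, so for each $q$ only those $p$ in an interval of length $(\alpha_2-\alpha_1)q$ contribute, not all $\varphi(q)$ of them; you slide from ``each admissible coprime residue'' to the full totient without justification. Carrying both constraints through (which is exactly what the trapezium encodes) yields $n(y,C)=\tfrac{3}{\pi^{2}}A\,y^{-1}+O\!\left(y^{-1/2}\log(1/y)\right)$ where $A$ is the hyperbolic area of the base, so in fact $n(y,C)/\Phi(y^{-1/2})\to A$; your claimed identity $n(y,C)=\Phi(y^{-1/2})+E(y)$ is literally correct only when $A=1$. (The paper's statement of Theorem \ref{T1.4} shares this looseness --- read the $\sim$ there as ``comparable in order of growth,'' which is what formula (\ref{1.2}) actually establishes, or normalize the base.)
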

Therefore, Merten's theorem will show the validity of (\ref{1.2}).
Incidentally, the number
$\tfrac{3}{\pi^{2}}=\tfrac{1}{2}[\zeta(2)]^{-1}$ which appears in
Mertens'  formula is equal to the volume of $M$. \textit{Hence, weak
convergence $m_{y}\to\overline{m}$ (as $y\to 0$) is equivalent to
Mertens' theorem}. This theorem appears in any standard text book in
number theory, for instance, Apostol \cite{Ap}, Hardy and Wright
\cite{HW}  and Chandrasekharan \cite{Ch}. See also \cite{Da}.

\section{Some lemmas}
\begin{definition}
  Let $\Omega$ be the volume form in $\widetilde{G}$ such that:
\begin{equation*}
  \Omega_p(X_p, Y_p,Zp) = 1, \forall p \in \widetilde{G};
\end{equation*}
\noindent where $X_p$, $Y_p$, $Z_p \in T_p\widetilde{G}$ are tangent
vectors at $p$. Let $\langle \cdot, \cdot \rangle$ be the Riemannian
metric such that $\{X, Y, Z \}$ is an oriented orthonormal
framing. Let $\| \cdot \|$ be the norm induced by this metric.
\end{definition}

Throughout this paper we will use this metric. The measure $m$
determined by $\Omega$ is the \emph{standard Riemannian measure or
  Haar measure}.  \par If $\Gamma \subset \text{SL}(2,\mathbb{R})$ is any
discrete subgroup then $X$, $Y$, $Z$, $\Omega$, $m$ and $\langle
\cdot, \cdot \rangle$ descend to $\widetilde{M}(\Gamma)\coloneqq
\text{SL}(2,\mathbb{R})/\Gamma$, and they also descend to any quotient
$\text{PSL}(2, \mathbb{R})/\Gamma$.

\begin{definition}
Let $M = \text{PSL}(2, \mathbb{R})/\text{PSL}(2, \mathbb{Z})$. We will let
$\overline{m}$ denote the normalized Haar measure: $\overline{m} =
(3/\pi^2)m$.
\end{definition}
Using the monomorphisms of the affine group into $\widetilde{G}$ given
by formulae (\ref{0.4}), we obtain the commutation rule:
\begin{equation}\label{equation1}
\begin{bmatrix}
a & 0 \\
0 & a^{-1}
 \end{bmatrix}
\begin{bmatrix}
1 & b \\
0 & 1
\end{bmatrix}
=
\begin{bmatrix}
1 & a^{2}b \\
0 & 1
\end{bmatrix}
\begin{bmatrix}
a & 0 \\
0 & a^{-1}
\end{bmatrix}.
\end{equation}
The modular function of the affine group is:
\begin{equation*}
\alpha(a,b) = a^2.
\end{equation*}
\par As a manifold $\widetilde{G}$ is the product $S^1 \times
\mathbb{R}^2$. The group $\widetilde{G}$ can be decomposed as a product in
two ways using Iwasawa's decompositions: $\widetilde{G} =
\mathcal{N}\mathcal{A}\mathcal{K}$ and $\tilde{G} = \mathcal{A}\mathcal{N}\mathcal{K}$ where $\mathcal{N}$ is
the nilpotent group of matrices:
\begin{equation*}
\mathcal{N}= \Bigg\{
\begin{bmatrix}
1 & x\\ 0 & 1
\end{bmatrix} \Bigg | x \in \mathbb{R} \Bigg\},
\end{equation*}

$\mathcal{A}$ is the diagonal group:
\begin{equation*}
\mathcal{A}= \Bigg\{
\begin{bmatrix}
a & 0\\ 0 & a^{-1}
\end{bmatrix} \Bigg | a > 0 \Bigg\},
\end{equation*}

and $\mathcal{K}$ is the compact circle group:
\begin{equation*}
\mathcal{K} =
 \Bigg\{
r(\theta) \coloneqq
\begin{bmatrix}
\cos\theta & \sin\theta\\
-\sin\theta & \cos\theta
\end{bmatrix} \Bigg | \theta \in \mathbb{R} \Bigg\}.
\end{equation*}

Therefore any
$\begin{bmatrix}
a & b\\
c & d
\end{bmatrix} $
$\in \widetilde{G}$ can be written in a unique way as follows:
\begin{equation*}
\begin{bmatrix}
a & b\\
c & d
\end{bmatrix}
=
\begin{bmatrix}
1 & x\\
0 & 1
\end{bmatrix}
\begin{bmatrix}
y & 0\\
0 & y^{-1}
\end{bmatrix}
\begin{bmatrix}
\cos\theta & \sin\theta\\
-\sin\theta & \cos\theta
\end{bmatrix}
\end{equation*}
where $x, \theta \in \mathbb{R} $ and $y>0$.
\par Also it can be written in a unique way (using(\ref{equation1})) as
follows:
\begin{equation*}
\begin{bmatrix}
a & b\\
c & d
\end{bmatrix}
=
\begin{bmatrix}
y & 0\\
0 & y^{-1}
\end{bmatrix}
\begin{bmatrix}
1 & y^{-2}x\\
0 & 1
\end{bmatrix}
\begin{bmatrix}
\cos\theta & \sin\theta\\
-\sin\theta & \cos\theta
\end{bmatrix}.
\end{equation*}
\par These two parametrizations of $\widetilde{G}$ give different
expressions for the Haar measure written in terms of $dx$, $dy$ and
$d\theta$.
\par If $\begin{bmatrix}
a & b\\
c & d
\end{bmatrix}$ $\in \widetilde{G}$, then:
\begin{equation*}
\begin{bmatrix}
a & b\\
c & d
\end{bmatrix}
=
\begin{bmatrix}
(c^{2} + d^{2})^{-1/2} & 0\\
0 & (c^2 + d^2)^{1/2}
\end{bmatrix}
\begin{bmatrix}
1 & u\\
0 & 1
\end{bmatrix}
\begin{bmatrix}
\cos\theta & \sin\theta\\
-\sin\theta & \cos\theta
\end{bmatrix}
\end{equation*}
where $\cos\theta = d(c^2 + d^2)^{-\frac{1}{2}}$, $\sin\theta= c(c^2 +
d^2)^{-\frac{1}{2}}$ and $u = \frac{1}{d}[b(c^2 + d^2) + c]$ if $d
\neq 0$. If $d = 0$ then $bc = -1$, and if $c>0$ we have
\begin{equation*}
\begin{bmatrix}
a & b\\
c & 0
\end{bmatrix}
=
\begin{bmatrix}
c^{-1} & 0\\
0 & c
\end{bmatrix}
\begin{bmatrix}
1 & ca\\
0 & 1
\end{bmatrix}
\begin{bmatrix}
0 & -1\\
1 & 0
\end{bmatrix}.
\end{equation*}
This give the explicit $\mathcal{ANK}$ decomposition.
\par We also have, if $d\neq 0$,
\begin{equation*}
\begin{bmatrix}
a & b\\
c & d
\end{bmatrix}
=
\begin{bmatrix}
1 & \frac{1}{d}[b+c(c^2 + d^2)^{-1}]\\
0 & 1
\end{bmatrix}
\begin{bmatrix}
(c^2 + d^2)^{-\frac{1}{2}} & 0\\
0 & (c^2 + d^2)^{\frac{1}{2}}
\end{bmatrix}
\begin{bmatrix}
\cos\theta & \sin\theta\\
-\sin\theta & \cos\theta
\end{bmatrix}
\end{equation*}
if $d= 0$,
\begin{equation*}
\begin{bmatrix}
a & b\\
c & 0
\end{bmatrix}
=
\begin{bmatrix}
1 & ac^{-1}\\
0 & 1
\end{bmatrix}
\begin{bmatrix}
c^{-1} & 0\\
0 & c
\end{bmatrix}
\begin{bmatrix}
0 & -1\\
1 & 0
\end{bmatrix}.
\end{equation*}
Let $A = \begin{bmatrix}
a & b\\
c & d
\end{bmatrix}$ $\in \widetilde{G}$, and let us write the unique
decomposition
\begin{equation*}
A
=
\begin{bmatrix}
1 & x\\
0 & 1
\end{bmatrix}
\begin{bmatrix}
y & 0\\
0 & y^{-1}
\end{bmatrix}
\begin{bmatrix}
\cos\theta & \sin\theta\\
-\sin\theta & \cos\theta
\end{bmatrix};
\quad y>0.
\end{equation*}
\par Then the map $\psi \colon \text{SL}(2, \mathbb{R}) \to T_1\mathbb{H}$, which
identifies $\text{SL}(2,\mathbb{R})$ as a double covering of $\text{PSL}(2,\mathbb{R})
\coloneqq T_1\mathbb{H}$ is given by
\begin{equation*}
\psi(A) = (x + iy^2, \frac{\pi}{2} - 2\text{arg}(ci + d)).
\end{equation*}
\par Therefore the measure induced in $T_1\mathbb{H}$ by this identification
is given by the volume form
\begin{equation*}
d\text{V} = \frac{1}{2} \frac{dx\; dy\; d\theta}{y^2}.
\end{equation*}
Hence, if $U \subset T_1\mathbb{H}$ is any open set then by Fubini's theorem
we have:
\begin{equation}\label{equation2}
m(U) = \int_{U}d\text{V} = \frac{1}{2}\int_{0}^{2\pi}\text{A}_h(\text{U}
(\theta))d\theta,
\end{equation}
where $\text{U}(\theta)= \{z \in \mathbb{H}|\quad (z,\theta)\in \text{U} \}$ is
the ``slice" of $U$ corresponding to $\theta$ and
\begin{equation*}
\text{A}_h(U(\theta))= \iint_{U(\theta)}\frac{dx \; dy}{y^2}
\end{equation*}
is its hyperbolic area.
\par From Formula (\ref{equation2}) we obtain that if $U \subset
T_1\mathbb{H}$ is $S^{1}$-saturated (i.e. it is a union of circle fibers),
$U= D\times S^{1}$, then
\begin{equation*}
m(U) = \pi\text{A}_h(D).
\end{equation*}
In particular if we take the fundamental domain of the action of
$\text{PSL}(2,\mathbb{Z})$ in $T_1\mathbb{H}$ consisting of all unit tangent vectors
based in the modular fundamental domain in $\mathbb{H}$ we have:
\begin{equation*}
m(M)= 3/\pi^2.
\end{equation*}
Now let $\text{A}=\text{A}(u,t,v)\in \widetilde{G}$ be as follows:

\begin{equation}\label{equation3}
\text{A}
=
\begin{bmatrix}
1 & u\\
0 & 1
\end{bmatrix}
\begin{bmatrix}
e^{t/2} & 0\\
0 & e^{-t/2}
\end{bmatrix}
\begin{bmatrix}
1 & 0\\
v & 1
\end{bmatrix}
=
\begin{bmatrix}
e^{t/2} + uve^{-t/2} & ue^{-t/2}\\
ve^{-t/2} & e^{-t/2}
\end{bmatrix}.
\end{equation}
then $\alpha(\text{A})= e^{t/2}(v^2 + 1)^{-1/2}$, so we have:
\begin{eqnarray}\label{equation4}
\text{A} & = &
\begin{bmatrix}
1 & u + e^{t}(v^2 + 1)^{-1}v\\
0 & 1
\end{bmatrix}
\begin{bmatrix}
e^{t/2}(v^2 + 1)^{-1/2} & 0\\
0 & e^{-t/2}(v^2 + 1)^{1/2}
\end{bmatrix}\\
& &\cdot
\begin{bmatrix}
(v^2 + 1)^{-1/2} & -v(v^2 + 1)^{-1/2} \\
v(v^2 + 1)^{-1/2} & (v^2 + 1)^{-1/2}
\end{bmatrix}\nonumber
\end{eqnarray}
Then (\ref{equation4}) is the $\mathcal{NAK}$ decomposition of $A$ and we have:
\begin{equation*}
\cos\theta = (v^2 + 1)^{-1/2}, \quad \sin\theta = -v(v^2 + 1)^{-1/2},
\quad \text{ and } v= -\tan\theta.
\end{equation*}
Let $\widetilde{U} \subset \text{SL}(2, \mathbb{R})$ be the closed set:
\begin{equation*}
\widetilde{U} = \{\text{A}(u,t,v)|u_0 \leq u \leq u_1, v_0 \leq v \leq v_1,
t_0\leq t \leq t_1 \}
\end{equation*}
where $\text{A}(u,t,v)$ is a defined in (\ref{equation3}).
\par Let $U \subset \text{PSL}(2,\mathbb{R})$ be the projection of $U$. Then:
\begin{equation*}
m(U)= \frac{1}{2}\int_{\theta_1}^{\theta_0} k(\theta)d\theta
\end{equation*}
where $\theta_0$, $\theta_1 \in (-\pi/2, \pi/2)$ are the unique numbers
such that $v_0 = -\tan\theta_0$ and $v_1 = -\tan\theta_1$, and where
$k(\theta) = \text{A}_h(U(\theta))$.
\par Using formula (\ref{equation4}) we obtain for $\theta_1 \leq \theta
\leq \theta_0$:
\begin{equation*}
U(\theta)= \{(-(1/2)e^t\sin(2\theta)+u)+(e^t\cos^2\theta)i \; | \; u_0
\leq u \leq u_1, \; t_0 \leq t \leq t_1 \} \subset \mathbb{H}.
\end{equation*}
We have that $U(\theta)$ has the same hyperbolic area as:
\begin{equation*}
V(\theta) = \{u\sec^2\theta + e^ti \; | \; t_0 \leq t \leq t_1, \; u_0
\leq u \leq u_1 \}.
\end{equation*}
This is so since $V(\theta)$ is obtained from $U(\theta)$ by the
hyperbolic isometry:
\begin{equation*}
T_{\theta}(z)= (\sec^2\theta)z + \frac{1}{2}e^t(\sec^2\theta)
(\sin 2\theta); \quad z \in \mathbb{H}, \; \theta_1 \leq \theta \leq \theta_0.
\end{equation*}
We thus obtain:
\begin{proposition}
\begin{equation*}
\text{A}_h(U(\theta)) = \text{A}_h(V(\theta))=
\Bigg[\int_{t_0}^{t_1}\int_{u_0}^{u_1} \frac{dx \; dy}{y^2}
\Bigg]\sec^2\theta = (u_1 -u_0)(e^{-t_0} - e^{-t_1}) \sec^2\theta
\end{equation*}
Therefore:
\begin{eqnarray*}
m(U) &=& \frac{1}{2}(u_1 -u_0)(e^{-t_0} -
e^{-t_1})\int_{\theta_0}^{\theta_1}\sec^2 \theta \; d\theta \\
&=& \frac{1}{2}(u_1 -u_0)(e^{-t_0} -
e^{-t_1})\int_{\theta_0}^{\theta_1}d(\tan\theta)\\
&=& \frac{1}{2}(u_1 -u_0)(e^{-t_0} -
e^{-t_1})\int_{\theta_0}^{\theta_1} dv \\
&=& \frac{1}{2}(u_1 -u_0)(e^{-t_0} - e^{-t_1})(v_1 -v_0)
\end{eqnarray*}
i.e.
\begin{equation}\label{equation5}
m(U) = \frac{1}{2}(u_1 -u_0)(v_1 - v_0)(e^{-t_0} - e^{-t_1})
\end{equation}
\end{proposition}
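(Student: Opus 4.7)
The plan is to reduce everything to explicit integrations in the upper half-plane, using the $\mathcal{NAK}$ decomposition (\ref{equation4}) together with the general slice formula (\ref{equation2}). The heart of the proof is a change-of-variables computation; all the geometry has already been set up by the two parametrizations of $\widetilde{G}$ discussed just before the proposition.

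First I would fix $\theta \in (\theta_1,\theta_0)$ and describe the slice $U(\theta)$ concretely. By (\ref{equation4}), an element $A(u,t,v) \in \widetilde{U}$ has $y^2 = e^t(v^2+1)^{-1} = e^t\cos^2\theta$ and real part $x = u + y^2 v = u - \tfrac12 e^t \sin 2\theta$ in the $\mathcal{N}\mathcal{A}\mathcal{K}$ coordinates, so via $\psi$ the slice $U(\theta)$ is precisely the set claimed in the proof, namely the image of the rectangle $[u_0,u_1]\times[t_0,t_1]$ under the map $(u,t)\mapsto (u-\tfrac12 e^t\sin 2\theta, e^t\cos^2\theta)$.

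Next I would compute $A_h(U(\theta))$ directly, rather than relying on the ad hoc isometry $T_\theta$ (whose formula as written depends on $t$). The Jacobian of the above parametrization is
\[
\left|\det\frac{\partial(x,y)}{\partial(u,t)}\right| = e^t \cos^2\theta = y,
\]
so the hyperbolic area element becomes $dx\,dy/y^2 = du\,dt/y = e^{-t}\sec^2\theta \, du\, dt$, and integrating over $[u_0,u_1]\times[t_0,t_1]$ gives $(u_1-u_0)(e^{-t_0}-e^{-t_1})\sec^2\theta$. The set $V(\theta)$ is a genuine coordinate rectangle in $\mathbb{H}$, and integrating $dx\,dy/y^2$ over it yields the same value, confirming $A_h(U(\theta))=A_h(V(\theta))$ (and justifying the alleged isometry a posteriori).

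Finally I would substitute into (\ref{equation2}) and perform the substitution $v=-\tan\theta$, $dv = -\sec^2\theta\,d\theta$, which converts $\int_{\theta_1}^{\theta_0}\sec^2\theta\,d\theta$ into $\tan\theta_0-\tan\theta_1 = v_1-v_0$, producing (\ref{equation5}). The only real obstacle is bookkeeping: one must keep straight that the ``$y$'' in the $\mathcal{A}$-factor of the $\mathcal{N}\mathcal{A}\mathcal{K}$ decomposition is the square root of the imaginary coordinate under $\psi$, match the ordering of $\theta_0 \ge \theta_1$ to $v_0 \le v_1$, and keep the sign of $dv$ under control so that the factor $(v_1-v_0)$ appears positively in the final answer.
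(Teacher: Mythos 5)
Your computation is correct and follows the same overall structure as the paper: both use the $\mathcal{NAK}$ decomposition to identify the slice $U(\theta)$, reduce $m(U)$ to the Fubini-type formula (\ref{equation2}), and finish with the substitution $v=-\tan\theta$. Where you depart is in how the slice-area identity is established, and your route is actually cleaner. The paper argues that $U(\theta)$ and $V(\theta)$ have equal hyperbolic area because $V(\theta)=T_\theta(U(\theta))$ for a ``hyperbolic isometry'' $T_\theta(z)=(\sec^2\theta)z+\tfrac12 e^t(\sec^2\theta)\sin 2\theta$; but as written the translation constant depends on $t$, which is not a fixed parameter on the slice $U(\theta)$ --- $t$ ranges over $[t_0,t_1]$ there. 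To make the paper's argument precise one must read $T_\theta$ as a one-parameter family of isometries $T_{\theta,t}$, one per horizontal layer, and then invoke Fubini, or else note that the single map $(x,y)\mapsto(\sec^2\theta\,x+\tan\theta\sec^2\theta\,y,\;\sec^2\theta\,y)$ is area-preserving on $\mathbb{H}$ without being a M\"obius isometry. Your direct Jacobian computation --- showing $\bigl|\det\partial(x,y)/\partial(u,t)\bigr|=e^t\cos^2\theta=y$ so that $dx\,dy/y^2=e^{-t}\sec^2\theta\,du\,dt$ --- bypasses this entirely and yields $A_h(U(\theta))=(u_1-u_0)(e^{-t_0}-e^{-t_1})\sec^2\theta$ in one stroke, with the equality $A_h(U(\theta))=A_h(V(\theta))$ emerging as a byproduct rather than an input. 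The final change of variable and the sign bookkeeping ($\theta_0\ge\theta_1$ versus $v_0\le v_1$, which the paper's displayed integral limits have slightly garbled) are handled correctly. This is a modest but genuine improvement in rigor over the argument in the text.
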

Another way to compute $m(\widetilde{U})$ is the following. Let $I
\subset \mathbb{R}^3$ be the cube:
\begin{equation*}
I = \{(u,t,v) \in \mathbb{R}^3 |u_0 \leq u \leq u_1\quad v_0 \leq v \leq
v_1\quad t_0 \leq t \leq t_1 \}.
\end{equation*}
Let $\varphi \colon I \to \widetilde{U}$ be the parametrization of
$\widetilde{U}$ given by $\varphi(u,t,v) \coloneqq \text{A}(u,t,v)$, where
$\text{A}(u,t,v)$ is given by (\ref{equation3}). Let
$\partial_u = \frac{\partial}{\partial u}$, $\partial_t =
\frac{\partial}{\partial t}$,
$\partial_v=\frac{\partial}{\partial v}$ be the standard vector fields in
$I$. Then, $(\varphi_*(\partial_u)_{(u,t,v)},
\varphi_*(\partial_t)_{(u,t,v)}, \varphi_*(\partial_v)_{(u,t,v)})$
is a basis for the tangent space $T_{\text{A}(u,t,v)}\tilde{G}$. Comparing
this basis with the basis
$(X_{\text{A}(u,t,v)}, Y_{\text{A}(u,t,v)}, Z_{\text{A}(u,t,v)})$ given by the
basic vector fields at $\text{A}(u,t,v)$, we see that the change of bases
is given by the matrix:
\begin{equation*}
\begin{bmatrix}
1 & 0 & 0 \\
-v & 1 & 0 \\
-v^{2}e^{-t} & ve^{-t} & e^{-t}
\end{bmatrix}
\end{equation*}
Since the determinant of this matrix is $e^{-t}$ we have:
\begin{eqnarray}\label{equation6}
\frac{1}{2}m(\widetilde{U}) &=& \frac{1}{2}\int_{u_0}^{u_1}
\int_{t_0}^{t_1} \int_{v_0}^{v_1} e^{-t} du \; dt \; dv \\
                        &=& \frac{1}{2}(u_1 - u_0)(v_1 -v_0)(e^{-t_0} -
                        e^{-t_1}). \nonumber
\end{eqnarray}
Let us consider now the geodesic flow $g_t \colon \widetilde{G} \to
\widetilde{G}$. We have for $t \in \mathbb{R}$ and $p \in \widetilde{G}$:
\begin{equation*}
g^*_t(Y_p) = e^tY_{g_{t}(p)} \quad g^*_t(Z_p) = e^{-t}Z_{g_{t}(p)},
\end{equation*}
where $g^*_t$ is the differential. Then:
\begin{equation*}
\|g^*_t(Y_p)\| = e^t\|Y_p\| \quad \|g^*_t(Z_p)\|= e^{-t}\|Z_p\|.
\end{equation*}
Hence, $\{g_t\}$ is an Anosov flow leaving invariant the splitting:
\begin{equation*}
T\widetilde{G} = E^{+} \oplus E^{-} \oplus E
\end{equation*}
where in this Whitney sum $E^{+}$, $E^{-}$ and $E$ are the line bundles
 spanned by $Y$, $Z$ and $X$ respectively. The fact that $g$ is Anosov
 implies  that it is structurally stable and its periodic orbits and
 dense.  This accounts for its very rich dynamics.
\par The differential of the geodesic flow acts on the canonical
framing as follows:
\begin{equation*}
g^*_t(X_p,Y_p,Z_p) = (X_{g_{t}(p)}, e^{t}Y_{g_{t}(p)},
e^{-t}Z_{g_{t}(p)})
\end{equation*}
Therefore, the Jacobian of $g_t$ is identically equal to one and the
geodesic flow preserves $\Omega$. A similar calculation shows that
$\Omega$ is also preserved by $h^{+}$ and $h^{-}$.  \par The circle
group $\mathcal{K}$ is the one-paremeter subgroup corresponding to
$\begin{bmatrix} 0 & 1\\ -1 & 0
\end{bmatrix} \in \mathfrak{sl}(2, \mathbb{R})$. Therefore the vector field
$W = Y -Z$ induces a free action of the circle on $\widetilde{G}$. The
foliations $\mathcal{F}^+$ and $\mathcal{F}^-$ tangent to $E^+ \oplus E$ and $E^-
\oplus E$ are respectively the unstable and stable foliations of the
geodesic flow and are also obtained by left translations of the two
copies of the affine group in $\widetilde{G}$.  Both foliations are
transverse to $W$ and every leaf of these foliations intersects each
circular orbit of $W$ in exactly one point. \emph{This is the
  geometric interpretation of the two Iwasawa decompositions}.  Given
any measure (or more generally, any Schwartz distribution) one can
disintegrate the given measure with respect to each of the foliations.
If $\Gamma \subset \widetilde{G}$ is any discrete subgroup, then the
vector field $W$ descends to $\widetilde{G}/\Gamma = M(\Gamma)$ and
induces a periodic flow which gives $M(\Gamma)$ the structure of a
Seifert fibration over a hyperbolic orbifold. The foliations $\mathcal{F}^+$
and $\mathcal{F}^-$ descend to $M(\Gamma)$ and their leaves are transverse
to the fibres of the Seifert fibration. If $m(M(\Gamma)) < \infty$
then every leaf is dense. With respect to the induced metric, each
leaf in any of the two foliations is isometric to $\mathbb{H}$. Formula
(\ref{equation5}) can be obtained directly by disintegration.\\

\noindent\textbf{Definition.} Let $x \in \widetilde{G}$. Let $a$, $b$
and $c$ be positive reals.  Then a \emph{standard box} or simply a
\emph{closed box}, denoted by $C(x; a,b,c)$, or simply by $C$ if the
parameters are understood, is the subset of $\widetilde{G}$ defined as
follows:
\begin{equation*}
C(x;a,b,c) = \bigg\{h^{-}_v(g_t(h^+_u(x))) \bigg| v \in [-\frac{a}{2},
  \frac{a}{2}], t \in[-\frac{b}{2}, \frac{b}{2}], u \in [-\frac{c}{2},
  \frac{c}{2}] \bigg\}.
\end{equation*}
We call $x$ the \emph{center of the box}.\\

\begin{remark}[]~
\begin{itemize}
\item [(i)] The image under the geodesic flow of a closed box is
  another closed box:
\begin{equation*}
g_t(C(x;a,b,c))= C(g_t(x); e^{-t}a,b,e^tc); \quad t\in \mathbb{R}.
\end{equation*}
\item[(ii)] Any left-translation of a box is also a box:
\begin{equation*}
\alpha C(x; a, b, c) = C(\alpha x; a,b,c); \quad \alpha \in \widetilde{G}.
\end{equation*}
\item[(iii)] Since the center and the parameters of the box can be
  chosen arbitrarily, it follows that the interiors of the boxes (i.e.
  the \emph{open boxes}) generate the topology of $\widetilde{G}$ and
  the $\sigma$-algebra of its Borel subsets.
\end{itemize}
\end{remark}
\par Let us compute the Haar measure of the box $C \coloneqq C(x;
a,b,c)$.  Using a left translation by $x^{-1}$ and the fact that
$\widetilde{G}$ is unimodular, it is enough to compute the volume with
parameters $(e; a,b,c)$ where $e$ is the identity element. But in this
case using formula (\ref{equation6}) we have:
\begin{lemma}
 $m(C(x;a,b,c))= ac(e^{b/2}- e^{-b/2}) = 2ac[\sinh(\frac{b}{2})]$.
\end{lemma}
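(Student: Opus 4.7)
The plan is to reduce to the case where the center is the identity element, and then read off the volume directly from the Jacobian computation already performed in formula (\ref{equation6}).

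First I would exploit unimodularity. By part (ii) of the remark preceding the lemma, $C(x;a,b,c) = x \cdot C(e;a,b,c)$, where $e$ denotes the identity of $\widetilde{G}$. Since $\widetilde{G}$ is unimodular, left-translation by $x$ preserves $m$, and therefore $m(C(x;a,b,c)) = m(C(e;a,b,c))$. This eliminates the $x$-dependence and reduces the claim to computing the volume of the box centered at the identity.

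Next I would parametrize $C(e;a,b,c)$ by the map $\varphi\colon (u,t,v) \mapsto h_v^-(g_t(h_u^+(e)))$. Using the explicit formulas (\ref{0.1}), this product equals the matrix $A(u,t,v)$ appearing in (\ref{equation3}), so
\begin{equation*}
C(e;a,b,c) = \varphi\bigl([-\tfrac{c}{2}, \tfrac{c}{2}] \times [-\tfrac{b}{2}, \tfrac{b}{2}] \times [-\tfrac{a}{2}, \tfrac{a}{2}]\bigr).
\end{equation*}
Substituting $u_0 = -c/2$, $u_1 = c/2$, $t_0 = -b/2$, $t_1 = b/2$, $v_0 = -a/2$, $v_1 = a/2$ into formula (\ref{equation6}) then gives
\begin{equation*}
m(C(e;a,b,c)) = (u_1-u_0)(v_1-v_0)(e^{-t_0}-e^{-t_1}) = ac\bigl(e^{b/2}-e^{-b/2}\bigr) = 2ac\sinh(b/2),
\end{equation*}
which is exactly the claim.

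The only real content lies in formula (\ref{equation6}), which rests on the triangular change-of-basis matrix from $(\varphi_*\partial_u, \varphi_*\partial_t, \varphi_*\partial_v)$ to the oriented orthonormal framing $(X,Y,Z)$, having determinant $e^{-t}$. Geometrically this Jacobian reflects how the geodesic flow conjugates the stable and unstable horocycle flows, as in the commutation identity (\ref{equation1}); once that Jacobian is in hand, the remainder of the proof is a one-variable integration and presents no further obstacle.
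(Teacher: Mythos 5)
Your proof is correct and follows exactly the paper's own argument: reduce to the identity-centered box via left-translation and unimodularity (the paper says precisely this, invoking $x^{-1}$), then observe that the resulting box is parametrized by $A(u,t,v)$ from formula (\ref{equation3}) and read off the volume from formula (\ref{equation6}). You have simply made explicit the endpoint substitutions $u_0=-c/2$, $u_1=c/2$, $t_0=-b/2$, $t_1=b/2$, $v_0=-a/2$, $v_1=a/2$ that the paper leaves implicit.
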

\par Let us return now $\mathcal{F}^{+}$ and $\mathcal{F^{-}}$. If $x \in
\widetilde{G}$, let us denote by $L^+(x)$ and $L^-(x)$ the leaves of
$\mathcal{F}^+ $ and $\mathcal{F}^{-}$ which contain the point $x$. Explicitly:
\begin{eqnarray*}
L^+(x) &=& \{h_u^+(g_{t}(x))|u,t \in \mathbb{R}\} \\
L^-(x) &=& \{h_v^-(g_{t}(x))|v,t \in \mathbb{R}\}. \\
\end{eqnarray*}
\noindent\textbf{Definition.} Let $C = C(x;a,b,c)$ be a closed box.
Then the base of $C$ is:
\begin{equation*}
\beta (C) \coloneqq \Big\{ g_t(h_u^-(x)) |\; -\frac{c}{2} \leq u \leq
\frac{c}{2}, \; -\frac{b}{2}\leq t\leq \frac{b}{2} \Big\}
\end{equation*}
Clearly, we have $\beta(C) \subset L_{-}(x)$ and $A_h(\beta(C)) =
2c\sinh(\frac{b}{2})$.  \par From now on we will work in $G =
\text{PSL}(2, \mathbb{R})$ and in $M = \text{PSL}(2, \mathbb{R})/\text{PSL}(2,
\mathbb{Z})$.  In these two manifolds we have the unstable and stable
foliations $\mathcal{F}^+$ and $\mathcal{F}^-$.  \par We define standard boxes in
$M$ exactly the same way. In fact, they are the projections of the
boxes in $\widetilde{G}$. However, we will only considerer
\emph{embedded} boxes in $M$.  If $C = C(x;a,b,c) \subset G$ is a box,
then:
\begin{equation*}
m(C)= ac(\sinh(b/2)).
\end{equation*}
If $C= C(x;a,b,c) \subset M$, then its Haar measure is given by
\begin{equation*}
\overline{m} = \frac{3}{\pi^2} ac(\sinh(b/2)).
\end{equation*}
Let us recall the identification $\psi \colon \text{PSL}(2, \mathbb{R}) \to
T_1 \mathbb{H}$, which assigns to each M\"obius transformation $\sigma(z) =
\frac{az +b}{cz + d}$, the point in $T_1\mathbb{H}$ by the formula:
\begin{equation*}
\psi(\sigma) = \left(\sigma(i), -\frac{\sigma
'(i)i}{|\sigma'(i)|}\right).
\end{equation*}
The action of $\text{PSL}(2,\mathbb{R})$ on $T_1 \mathbb{H} = \{(z, \theta)| z \in
\mathbb{H}, \theta(\text{mod} 2\pi) \}$ is given by:
\begin{equation*}
\sigma(z, \theta) = (\sigma(z), \theta-2\text{arg}(cz + d)), \quad
\sigma(z) = \frac{az + b}{cz + d}.
\end{equation*}
(Recall that the angles are measured counter-clockwise from the
vertical.)  \par Let $L^+(e)$ and $L^{-}(e)$ be the unstable and
stable leaves through the identity $e \in G$.\\

\noindent\textbf{Definition.}  $\psi(L^+(e)) \coloneqq L^+ $ and
$\psi(L^-(e)) \coloneqq L^-$ are the \emph{basic unstable and basic
  stable leaves}, respectively:
\begin{eqnarray*}
L^+ &=& \{(x + iy, \pi) \in T_1\mathbb{H}\} \\
L^- &=& \{ (x + iy, 0) \in T_1\mathbb{H} \}.
\end{eqnarray*}\\

\noindent\textbf{Definition.}
An \emph{adapted box} $C \subset G$ is a standard box such that
$\beta(C) \subset L^-.$ \\

If $\alpha \in G $ and $C(\alpha; a,b,c)$ is an adapted box centered
at $\alpha$, then $\alpha = (x_0 + iy_0, 0)$ and
\begin{equation*}
\beta(C) = \{(x + iy, 0)|y_0e^{-\frac{b}{2}} \leq y \leq
y_0e^{\frac{b}{2}}, |x - x_0| \leq ay_0/2 \}
\end{equation*}
(See Figure \ref{Figure8}).

\begin{figure}[h]
\begin{center}
  \includegraphics[width=12cm]{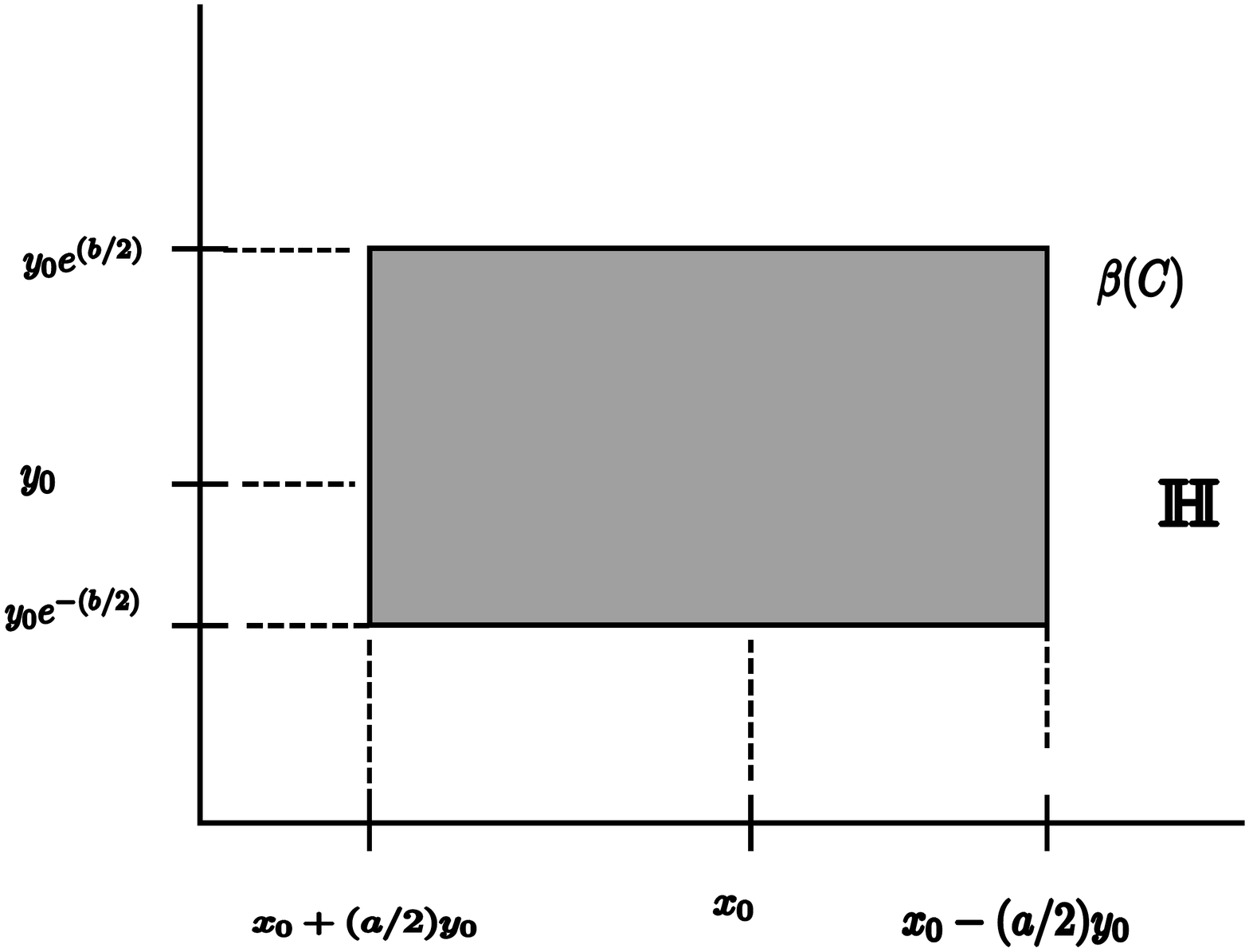}
\caption{}
\label{Figure8}
\end{center}
\end{figure}

\par We see that bases of adapted boxes can be identified with
rectangles in $\mathbb{H}$ whose sides are parallel to the coordinate axis.
We will not distinguish the base and the corresponding rectangle in
$\mathbb{H}$.  \par Let $p \colon G \to M $ be the covering projection and
let $\overline{P} \colon M \to S(\text{(PSL)}(2, \mathbb{Z}))$ be the Seifert
fibration onto the modular orbifold. Then $P(L^+)$ and $P(L^-)$ are
called the \emph{basic unstable and stable leaves } of the
corresponding geodesic flow.  $P(L^+)$ and $P(L^-)$ are the cylinders
mentioned before which contain all periodic orbits of $h^+$ and $h^-$
respectively.  \par Both $P(L^+)$ and $P(L^-)$ are dense in $M$. If $z
\in S(\text{PSL}(2,\mathbb{Z}))$ and $S^1(z) = \overline{P}^{-1}(z)$ denotes
the circular fibre over $z$, then
\begin{eqnarray*}
  L^+(\mathbb{Q}) &\coloneqq& S^1(z)\cap P(L^+)\\
  L^-(\mathbb{Q}) &\coloneqq&
  S^1 (z)\cap P(L^-)
\end{eqnarray*}
have the property that if $\alpha, \beta \in L^+(\mathbb{Q})$ (or
$L^-(\mathbb{Q})$) then there exists $\theta_0 \in \mathbb{Q}$ such that
\begin{equation*}
r_{\pi \theta_0}(\alpha) = \beta
\end{equation*}
where $r_{\theta} \colon M \to M $, $\theta \in \mathbb{R}$ is the periodic
flow induced by $W = Y - Z$.  \par This simple fact happens to be very
important for number theory.  The reason is clear (apart from the fact
that the rationals are involved): any invariant measure for $h^+$
corresponds to a Choquet measure on the image of the curve $D \colon
[0, \infty] \to C^*$. If this measure has compact support (or even if
the density of a Choquet measure at $D(0)$ and $D(\infty)$ decays very
rapidly as $y \to 0$ or $y \to \infty$) then the corresponding
probability invariant measure for $h^+$ is concentrated in $P(L^+)$.
\par Let $C$ denoted the set of all boxes in $M$ which are adapted,
i.e., $C \in \mathcal{C} \iff \beta(C) \subset P(L^-)$. Then $\mathcal{C}$ is a
basis for the topology of $M$. Hence, to see how the measure $m_y$,
approach the normalized Haar measure $\overline{m}$, it is enough to
estimate with precision $m_y(C)$ for all $C \in \mathcal{C}$.  \par For each
$t \in \mathbb{R}$, let $\Lambda_t$ be the horizontal line which is
parametrized by:
\begin{equation*}
\lambda_t(s) = s + e^{-t}i; \quad s \in \mathbb{R}.
\end{equation*}
Let $\hat{\lambda}_t \colon \mathbb{R} \to T_1\mathbb{H}$ be defined by
$\hat{\lambda}_t(s) = (\lambda_t(s),\pi)$. Thus $\lambda_t$
parametrizes the horocycle with equation $y= e^{-t}$ and $P \circ
\hat{\lambda}_t$ parametrizes with arcl-ength as parameter the unstable
horocycle orbit of period $y= e^{-t}$, i.e. it parametrizes:
\begin{equation*}
  \gamma_t = g_t(\gamma_0).
\end{equation*}
Let $A = \begin{bmatrix} a & b \\ c & d
\end{bmatrix} \in \text{SL}(2,\mathbb{Z})$, $c \neq 0 $.
Let $\bar{A}(z) = \frac{az + b}{cz + d}$ denote the corresponding
modular M\"obius transformation and let $A' \colon T_1 \mathbb{H} \to
T_1\mathbb{H}$ be the induced map in the unit tangent bundle. The image of
$\Lambda _t$ under $\bar{A}$ is the horocycle which is the circle
tangent to the real axis at the point $\frac{a}{c}$ and whose highest
point is:
\begin{equation}\label{equation7}
  z = \frac{a}{c} + e^t c^{-2}i; \quad (c\neq 0).
\end{equation}

This fact follows immediately since the point with biggest ordinate in
$\bar{A}(\Lambda_t)$ corresponds to the unique real number $s_0$ for
which $\frac{d}{ds}(\bar{A} \circ \lambda_t)\big|_{s=s_{0}}$ is real.
Hence $s_0 = -\frac{d}{c}$. We also obtain that $A'(\gamma_t)$
intersects $L^-$ only at the point:
\begin{equation*}
\left(\frac{a}{c} + e^t c^{-2}i,0\right) \in L^{-}, \quad (c \neq 0).
\end{equation*}
\par When $c = 0$, then $\bar{A}$ is a horizontal translation by an
integer and $\Lambda_t$ and $\gamma_t$ are kept invariant by $\bar{A}$
and $A'$ respectively.\\

\noindent\textbf{Definition.}
 For $t=0$ we have the basic horocycle:
\begin{equation*}
           H_1 = \{(x,y) \in \mathbb{H}| x \in \mathbb{R} \} = \Lambda_0.
    \end{equation*}
\par The \emph{basic horoball} is the boundary of $H_1$ in the extended
hyperbolic plane (the closure of $\mathbb{H}$ in the Riemann sphere):
\begin{equation*}
B_1 = \{(x,y) \in \mathbb{H}| \; \infty \geq y \geq 1 \}.
\end{equation*}
\par The images by elements of $\text{PSL}(2,\mathbb{Z})$ of the basic
horocycle are called the \emph{Ford circles } and the images of the
basic horoball are called the \emph{Ford discs}.  \par Two Ford discs
either coincide or else they are tangent at a point in $\mathbb{H}$ and
have disjoint interiors. The hyperbolic area of a Ford disc is one.
\par It follows from formula (\ref{equation7}) that the Ford discs are
tangent to the real axis at rational points (except for the basic
horoball which is tangent at the point at infinity), and every
rational point is a point of tangency. all these facts are important
for number theory (read the very last paragraph in Rademacher's
classic book in complex functions). (See Figure \ref{Figure9}).

\begin{figure}[h]
\begin{center}
  \includegraphics[width=8cm]{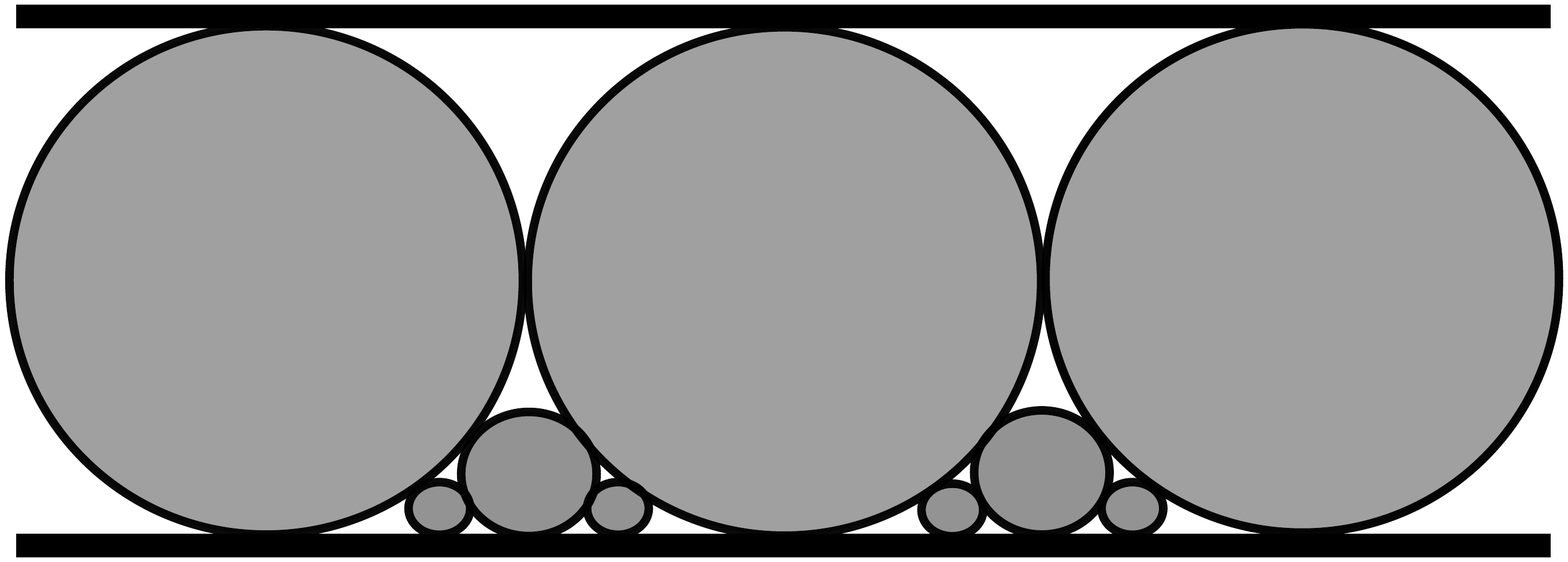}
\caption{}
\label{Figure9}
\end{center}
\end{figure}

\par Let $\mathcal{F}$ be the set of all Ford discs that intersect the strip
$0 \leq x \leq 1, \; y > 0$. For $r > 0$ let $F_r$ denote the subset
of Ford discs in $F$ that intersect the half-plane $y \geq
r^{-1}$. Then the circles in $F_r$ are tangent to the real axis at
exactly the rational points in $[0,1]$ which belong to the Farey
sequence of order $r^{1/2}$.  Therefore, its cardinality $|F_r|$ is
given by
\begin{equation*}
|F_r| = \sum_{n \leq r^{1/2}} \varphi(n) \coloneqq \Phi(r^{1/2}),
\end{equation*}
where $\varphi$ denotes Euler's \emph{totient function}. Everything
follows just by looking at formula (\ref{equation7}).
\par Let $Q$ be a rectangle which corresponds to the base of an adapted
box $C$. For each $t \in \mathbb{R}$ let $T_1 \colon \mathbb{H} \to \mathbb{H}$ be
defined by
\begin{equation}\label{equation8}
T_t(x,y) = (x, e^{-t}y)
\end{equation}
and let $Q_t = T_t(Q)$. Then $Q_t$ is the base of the adapted box
$g_{-t}(C)$.
\par As $t \to \infty $, $Q_t$ starts intersecting more and more Ford
discs. Let $n(t)$ denote the number of Ford discs whose highest point
is contained in $Q_t$, then $n(t)$ has the same growth type as that of
Farey  sequences contained in a fixed interval.
\par At this point it is clear the connection between Farey sequences,
the ergodic measures of the horocycle flow in $M$ and the Riemann
hypothesis. This is the link between Zagier's result \cite{Za} and the
following theorems of Franel \cite{Fr} and Landau \cite{Lan}:\\

\noindent\textbf{Teorema (Franel-Landau).}  \textit{Let $F_N = \{f_1,
  f_2, ..., f_{\Phi(N)} \}$ be the Farey sequence of order $N$
  consisting of reduced fractions $\frac{a}{b} \in (0,1]$, arranged in
order of magnitude.  Let $\delta_n = f_n - \frac{n}{\Phi(N)}$, $(n =
1, ..., \Phi(N))$ be the amount of discrepancy between $f_n$ and the
corresponding fraction obtained by equi-dividing the interval $[0,1]$
into $\Phi(N)$ equal parts.  Then, a necessary and sufficient
condition for the Riemann hypothesis is that:
\begin{equation*}
\sum_{i = 1}^{\Phi(N)} \delta_i^{2} = O\left(\frac{1}{N^{1-\epsilon}}
\right) \quad \text{ for all } \epsilon >0, \; (Franel).
\end{equation*}
 An alternative necessary and sufficient condition is that:}
\begin{equation*}
\sum_{i=1}^{\Phi(N)}|\delta_i| = O(N^{\frac{1}{2}+ \epsilon}) \quad
\text{ for all } \epsilon> 0, \; (Landau).
\end{equation*}\\

The subgroup of integer translations of the modular group identifies
$(x,y) \in \mathbb{H}$ with $(x + n, y) \in \mathbb{H}$, and $((x,y),0)$ with
$((x + n, y), 0)$; $n \in \mathbb{Z}$. Since we want adapted boxes in $M$
which are embedded, it is enough to consider standard boxes in $M$
which are projections of standard boxes in $T_1 \mathbb{H}$ whose bases lie
in the half-open strip $0 < x \leq 1$ (See Figure \ref{Figure10}).

\begin{figure}[h]
\begin{center}
  \includegraphics[width=8cm]{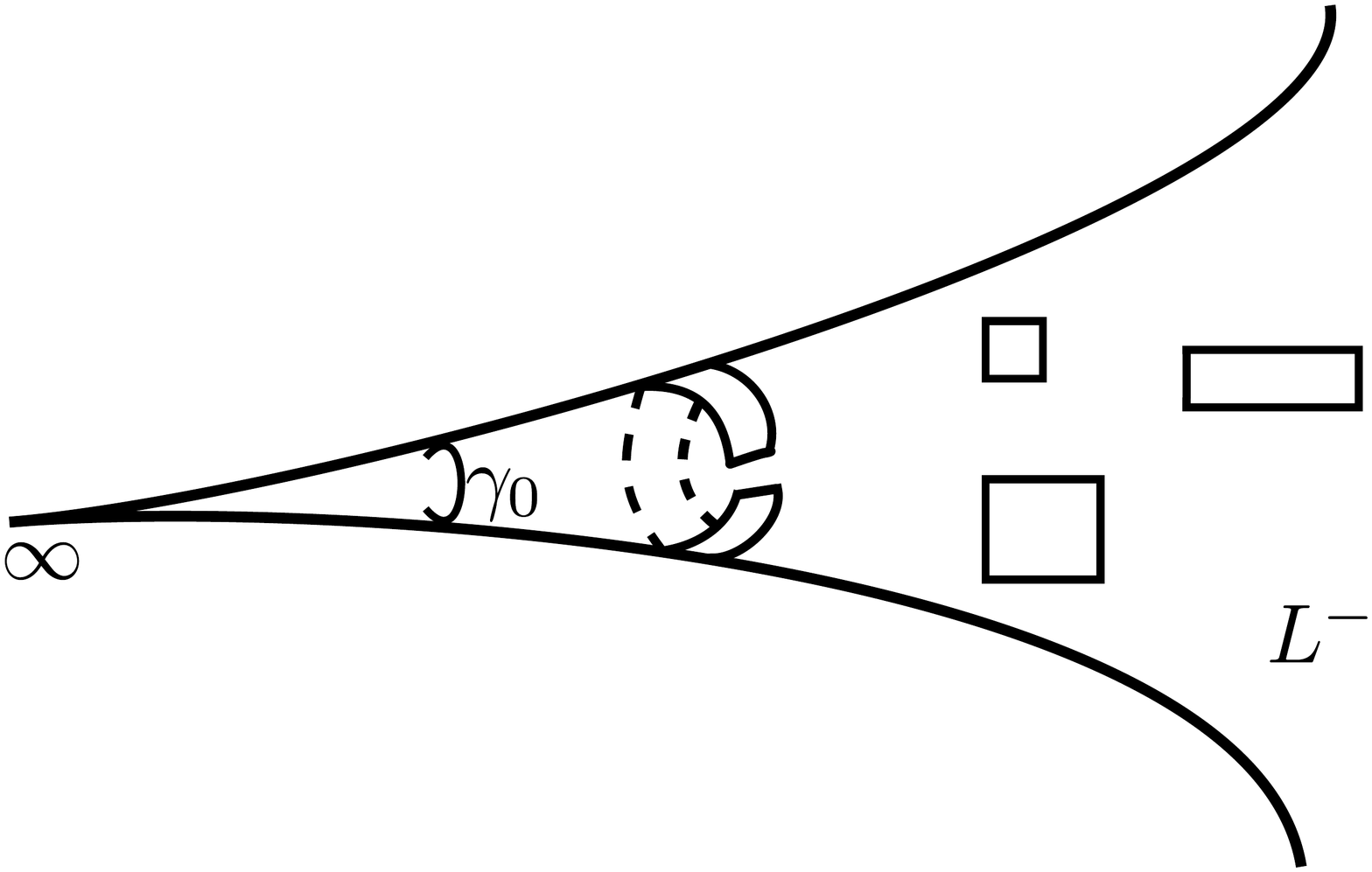}
\caption{}
\label{Figure10}
\end{center}
\end{figure}

\par Let $C \subset M$ be a box such that $\beta(C)$ is the rectangle
$Q \coloneqq Q(\alpha_1, \alpha_2; \beta_1, \beta_2)$, where $0 <
\alpha_1 < \alpha_2 \leq 1, 0< \beta_1 < \beta_2$, defined by:
\begin{equation*}
Q = \{(x,y) \in \mathbb{H}| \alpha_ \leq x \leq \alpha_2, \beta_1 \leq y
\leq \beta_2 \}.
\end{equation*}
\par we have that $\beta(g_{-t}(C)) = T_t(\beta(C))$ where $T_t$ is
given by formula (\ref{equation8}). Therefore:
\begin{eqnarray*}
n(t) &\coloneqq& \#\{g_t(\gamma_0) \cap \beta(C) \} = \#\{\gamma_0
\cap g_t(C)\} \\ &=& \# \{ \bar{A} \in \text{PSL}(2, \mathbb{Z}) \; \big| \;
A'(\gamma_t) \cap \{((x,y),0) \in T_1 \mathbb{H} \; \big| \; (x,y) \in Q \}
\neq \emptyset \} \\ &=& \# \bigg\{ A =
\begin{bmatrix}
	a & b \\
	c & d
\end{bmatrix}  \in \text{SL}(2, \mathbb{Z}), c \neq 0 \; \big| \; A'(\gamma_t)
\cap \{((x,y),0) \in T_1\mathbb{H} \; \big| \; (x,y) \in Q \} \neq
\emptyset \bigg\} \\ &=& \# \{ (a,c) \in \mathbb{Z}^+ \times \mathbb{Z}^+ \; |
\; c \neq 0, \{a,c \} = 1, (a/c + e^tc^{-2}i) \in Q \} \\ &=& \# \{
(a,c) \in \mathbb{Z}^+ \times \mathbb{Z}^+ \; | \; c \neq 0, \{ a,c\} = 1,
\alpha_1 \leq a/c \leq \alpha_2, e^{\frac{t}{2}}\beta_2^{-\frac{1}{2}}
\leq c \leq e^{\frac{t}{2}}\beta_1^{-\frac{1}{2}} \}.
\end{eqnarray*}
We thus have the following:\\

\noindent\textbf{Proposition.}  \textit{The number of points in which
  the horocycle orbit $g_t(\gamma_0)$ intersects the base of the box
  $\beta(C)$ as a function of $t$ is given by}
\begin{equation}\label{equation9}
n(t) = \# \{ (a,c) \in \mathbb{Z}^+ \times \mathbb{Z}^+ \; | \; c \neq 0, \{
a,c\} = 1, \alpha_1 \leq a/c \leq \alpha_2, e^{\frac{t}{2}}
\beta_2^{-\frac{1}{2}} \leq c \leq e^{\frac{1}{2}}
\beta_1^{-\frac{1}{2}} \}.
\end{equation}

\par Let $\mathbb{R}_+^2 = \{ (u,v) \in \mathbb{R} \; | \; v> 0 \}$ denote the
open upper half-plane with the Euclidean metric.  \par Let $\Psi
\colon \mathbb{R}_+^2 \to \mathbb{H}$, be the function:
 \begin{equation}\label{equation10}
 \Psi(u,v) = \frac{u}{v} + v^{-2}i, \quad (v > 0).
 \end{equation}
This simple function has the following remarkable properties:\\

\noindent\textbf{Proposition.}  \textit{$\Psi$ is an
  orientation-reversing diffeomorphism from the upper euclidean
  half-plane onto the hyperbolic plane. It sends rays emanating from
  the origin onto the family of geodesics which are vertical lines and
  the family of horizontal lines onto the family of horizontal
  horocycles.  The absolute value of the Jacobian of $\Psi$, with
  respect to the Euclidean and hyperbolic metric is 2. Therefore,
  $A_e(U) = \frac{1}{2}A_h(\Psi(U))$ where $U \subset \mathbb{R}_+^2$ is
  any open set and $A_e$, $A_h$ denote the Euclidean and hyperbolic
  areas, respectively. $\Psi$ sends the integer lattice points with
  relatively prime coordinates in the euclidean upper half-plane onto
  the points of tangency of the Ford circles in the hyperbolic
  plane.}\\

\par The usefulnes of this proposition is that \emph{it reduces the
  problem of counting the number of intervals in which a closed
  horocycle intersects a box to a euclidean lattice point counting}
\par Let $\Delta$ be any trapezium in $\mathbb{R}_+^2$ whose boundary
consists of two horizontal lines and two segments which are collinear
to the origin. Then $\Psi(\Delta)$ is a rectangle in $\mathbb{H}$ whose
sides are parallel to the coordinates axis(See Figure \ref{Figure11}).

\begin{figure}[h]
\begin{center}
  \includegraphics[width=10cm]{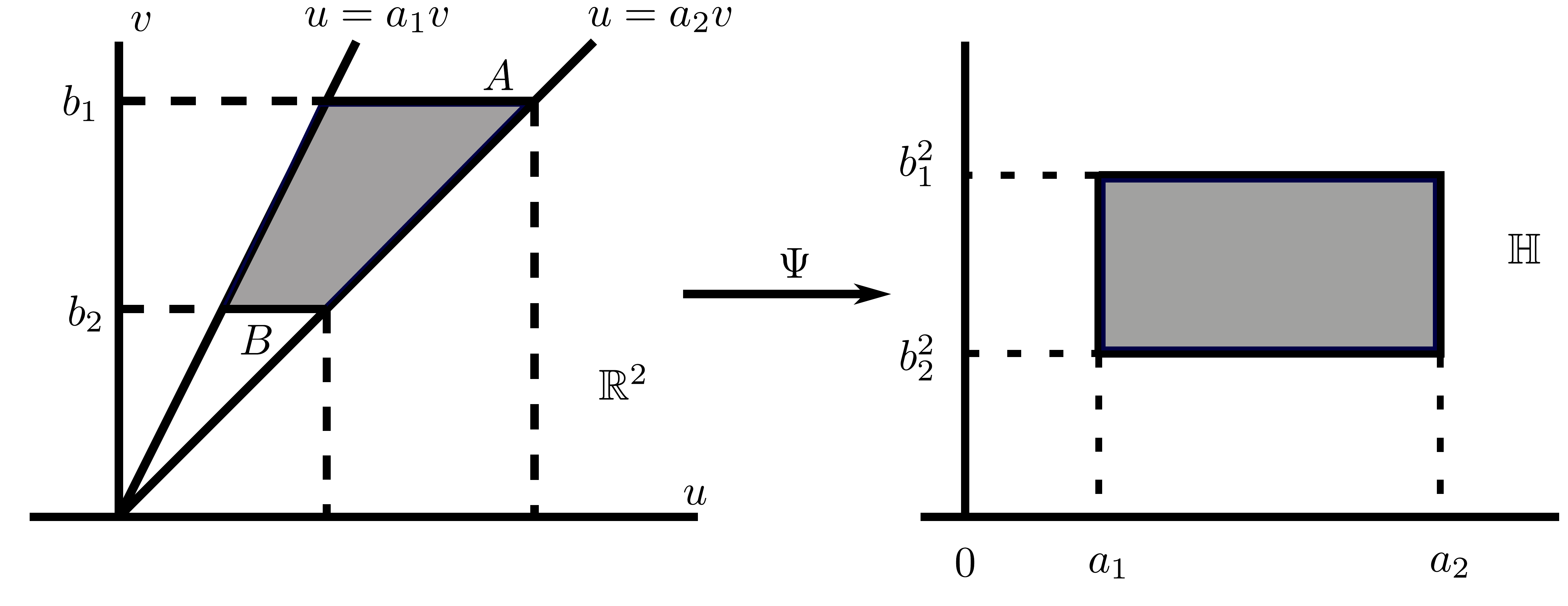}
\caption{}
\label{Figure11}
\end{center}
\end{figure}

\par If $\Delta$ is bounded by the lines $u=a_1v$, $u= a_2v$ and the
lines $v= b_1$, $v= b_2$, then $\Psi(\Delta) = \{(x,y) \in \mathbb{H}| a_1
\leq x \leq a_2, b_1^{-2} \leq y \leq b_2^{-2} \}$. then,
\begin{equation*}
A_e(\Delta) = \frac{(a_2 - a_1)(b_1^2 - b_2^2)}{2}
\end{equation*}
and
\begin{equation*}
-\frac{1}{2}A_h(\Psi(\Delta)) = \int_{b_1^{-2}}^{b_2^{-2}}
\int_{a_1}^{a_2} \frac{dx dy}{y^2} = \frac{a_2 - a_1}{2}
\int_{b_1^{-2}}^{b_2^{-2}} \frac{dy}{y^2} = \frac{1}{2}(a_2 -a_1)
(b_1^2 - b_2^2)
\end{equation*}
Therefore
\begin{equation}\label{equation11}
A_e(\Delta) = \frac{1}{2} A_h(\Psi(\Delta))
\end{equation}
Of course we knew (\ref{equation11}) since the Jacobian of $\Psi$ is
$-2$, but we wanted this fact explicit (in fact the formula
(\ref{equation11}) for all trapezia implies that the Jacobian is
$\pm2$).  \par For each $t \in \mathbb{R}$, let $\mu_{t} \colon
\mathbb{R}_{+}^{2} \to \mathbb{R}_{+}^{2}$ be the homothetic transformation:
\begin{equation}\label{equation12}
\mu_t(u,v) = (e^{t/2}u, e^{t/2}v).
\end{equation}
\par Let $\Delta(t) = \mu_t(\Delta(0))$, for $t \in \mathbb{R}$, where
$\Delta(0) = \Delta$ is the trapezium of the previous paragraph. Let
$T_t$ be the transformation defined by formula (\ref{equation8}), the
we have :
\begin{equation}\label{equation13}
\Psi \circ \mu_t = T_t \circ \Psi.
\end{equation}
\par Hence, by formula (\ref{equation9}) we have:
\begin{equation}\label{equation14}
n(t) = \# \{ (a,b) \in  \mathbb{Z}^+ \times \mathbb{Z}^+ |\; \{ a,c\} = 1, (a,b)
\in \Delta(t)  \}.
\end{equation}

\section{Main results}
\par Now, all that it is left is to estimate $n(t)$. We will need the
following:
\begin{theorem}[Mertens]\label{mertens}
Let $0 < \alpha_1 < \alpha_2 \leq 1$. For each $\ell > 2$, let
$\tau(\ell)$ be the triangle in $\mathbb{R}_+^2$ defined by
\begin{equation*}
\tau(\ell) = \{(u,v) \in \mathbb{R}_+^2 | v \leq \ell, \alpha_1 \leq
\frac{u}{v} \leq \alpha_2 \}.
\end{equation*}
Let $N(\ell)$ be the number of lattice points with relatively prime
integral coordinates contained in $\tau(\ell)$. The
\begin{equation*}
N(\ell) = \frac{6}{\pi^2} A_e(\tau(\ell)) + \eta(\ell)\ell\log \ell,
\end{equation*}
where $|\eta(\ell)|$ is bounded by $(2 + 2\sqrt{2}) (1 + 1/ \log2)<24$
for all $\ell \geq 2$ .
\end{theorem}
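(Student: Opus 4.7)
The plan is to reduce the coprime lattice count to a standard lattice count via M\"obius inversion and then apply an elementary row-by-row estimate. Let $M(L)$ denote the total number of pairs $(a,b)\in\mathbb{Z}_+^2$ (with no coprimality condition) lying in $\tau(L)$. Every such pair can be written uniquely as $(a,b)=(da',db')$ with $d=\gcd(a,b)$ and $\gcd(a',b')=1$, and the condition $(a,b)\in\tau(L)$ is equivalent to $(a',b')\in\tau(L/d)$ because $\tau$ is a cone-truncation that scales linearly in $L$. Hence $M(L)=\sum_{d\ge 1}N(L/d)$, and M\"obius inversion yields
\begin{equation*}
N(L)=\sum_{d\ge 1}\mu(d)\,M(L/d),
\end{equation*}
where the sum is finite since $M(L/d)=0$ as soon as $L/d<1$.

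Next I would estimate $M(L)$ directly. For each integer $v$ with $1\le v\le L$, the integers $u$ with $(u,v)\in\tau(L)$ are precisely those with $\alpha_1 v\le u\le\alpha_2 v$, and there are $\lfloor\alpha_2 v\rfloor-\lceil\alpha_1 v\rceil+1=(\alpha_2-\alpha_1)v+\epsilon_v$ of them with $|\epsilon_v|\le 1$. Summing,
\begin{equation*}
M(L)=\sum_{v=1}^{\lfloor L\rfloor}\bigl[(\alpha_2-\alpha_1)v+\epsilon_v\bigr]=\tfrac{1}{2}(\alpha_2-\alpha_1)L^{2}+R(L)=A_e(\tau(L))+R(L),
\end{equation*}
where a careful count of the contributions from the three boundary segments of $\tau(L)$ (whose total Euclidean length is at most $(1+2\sqrt{2})L$) gives $|R(L)|\le(1+\sqrt{2})L$, the factor $1+\sqrt{2}$ arising from the two slanted sides of slopes $\le 1$ and the horizontal top.

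Substituting into the M\"obius-inverted formula and using $A_e(\tau(L/d))=A_e(\tau(L))/d^2$ I obtain
\begin{equation*}
N(L)=A_e(\tau(L))\sum_{d\le L}\frac{\mu(d)}{d^{2}}+\sum_{d\le L}\mu(d)\,R(L/d).
\end{equation*}
The main term is controlled by $\sum_{d\le L}\mu(d)/d^{2}=6/\pi^{2}+O(1/L)$, contributing an error of order $A_e(\tau(L))/L=O(L)$. The sum of the remainder terms is bounded by $(1+\sqrt{2})L\sum_{d\le L}1/d$, and the harmonic inequality $\sum_{d\le L}1/d\le 1+\log L\le(1+1/\log 2)\log L$ for $L\ge 2$ packages everything into $|\eta(L)|\le(2+2\sqrt{2})(1+1/\log 2)$.

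The routine part is M\"obius inversion and the row-by-row count, both of which are standard. The main obstacle, and the part that requires genuine care, is tracking the explicit constant $(2+2\sqrt{2})(1+1/\log 2)$: one must combine the boundary contribution to $R(L)$ (whose sharp form demands handling the two slanted sides, each of length $\le\sqrt{2}\,L$, together with the horizontal top of length $\le L$), the $O(1/L)$ tail of $\sum\mu(d)/d^{2}$, and the harmonic sum bound in a single inequality that holds uniformly for all $L\ge 2$ and all admissible $\alpha_1,\alpha_2$. It is in this accounting, rather than in the analytic core, that the stated bound $|\eta(\ell)|<24$ is finally secured.
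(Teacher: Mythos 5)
Your proof is correct and follows essentially the same route as the paper: both reduce the coprime count to the unrestricted count $\overline N(\ell)=\sum_{d\le\ell}N(\ell/d)$, invert via the M\"obius function, and then use an elementary Gauss-type lattice estimate for $\overline N$ together with the partial sums of $\sum\mu(d)/d^2$ and $\sum 1/d$. The only cosmetic difference is that the paper packages the lattice estimate as an area-plus-perimeter bound $|\overline N(\ell)-A(\ell)|\le\sqrt{2}\,p(\ell)$ while you carry out the equivalent row-by-row count; either way one arrives at the same constant $(2+2\sqrt{2})(1+1/\log 2)$.
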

\begin{proof}
This result is classical and the method of proof starts with Gauss.
However, I will give a proof here since I will need the method for the
following lemmas. I will adapt the proof given in Chandrasekharan
[Ch. p. 59].  \par $A(\ell) = A(1) \ell^2 = (\alpha_2 -
\alpha_1)\ell^2/2$, and $p(\ell)= p(1)\ell$ be the Euclidean area and
perimeter of $\tau(\ell)$, respectively. Let
\begin{equation*}
K(\ell) = \{ (a,b) \in \mathbb{Z}^+ \times \mathbb{Z}^+ | (a,b) \in \tau(\ell)
\},
\end{equation*}
and let $\overline{N}(\ell) = |K(\ell)|$ be its cardinality. Then
\begin{equation*}
A(\ell) - \sqrt{2} p(\ell) \leq \overline{N}(\ell) \leq A(\ell) +
\sqrt{2}p(\ell), \text{ for all } \ell \geq 2.
\end{equation*}
Hence
\begin{equation}\label{equation15}
\overline{N}(\ell)= A(\ell) + \alpha(\ell) p(\ell)\quad \text{ where }
\; | \alpha(\ell)| \leq \sqrt{2} \text{ for all } \ell \geq 2.
\end{equation}
For $\ell \geq 2$:
\begin{equation*}
\overline{N}(\ell) = \sum_{(m,n) \in K(\ell)}1
\end{equation*}
(an empty sum will be by definition equal to zero).  \par Therefore
\begin{equation*}
\overline{N}(\ell) = \sum_{1 \leq d \leq \ell} \sum_{\substack{(m,n)
    \in K(\ell) \\ \{m,n\}=d}}1 \quad (\ell \geq 2).
\end{equation*}
\par Then, since $\{n,m \}= d \Leftrightarrow \{\frac{n}{d},
\frac{m}{d} \}=1$, it follows that there exists a bijective
correspondence between the sets
\begin{eqnarray*}
B_1(d) &=& \{(n,m) \in \tau(\ell) | \{(n,m)=d \} \text{ and }\\ B_2(d)
&=& \{(n',m') | (n'm')\in \tau(\ell/d), \{n',m' \}=1 \}
\end{eqnarray*}
where $(1 \leq d \leq \ell)$.
\par By definition, the number of
elements of $B_2(d)$ is equal to $N(\ell/d)$. Hence,
\begin{equation*}
\overline{N}(\ell) = \sum_{1\leq d \leq
\ell}N\left(\frac{\ell}{d}\right), \quad
(\ell \geq 2).
\end{equation*}
Applying the M\"obius inversion formula, we obtain
\begin{equation*}
N(\ell) = \sum_{1\leq d \leq \ell}\mu(d)\overline{N}
\left(\frac{\ell}{d}\right), \quad (\ell \geq 2).
\end{equation*}
where $\mu$ is the M\"obius function.  \par By (\ref{equation15}), we
have
\begin{equation*}
N(\ell) = \sum_{1\leq d \leq \ell} \mu (d) \bigg[ \frac{A(\ell)}{d^2}
  + \ell p(1) \alpha \bigg( \frac{\ell}{d} \bigg) \bigg(
  \frac{1}{d}\bigg) \bigg].
\end{equation*}
\par Since $\alpha(\cdot)$ is bounded for all $\ell >2$, and
$|\mu(d)|\leq 1$, we have
\begin{equation*}
-\sqrt{2} \sum_{1\leq d \leq \ell}\frac{1}{d} \leq \sum_{1 \leq d \leq
  \ell}\frac{\alpha(\frac{\ell}{d})\mu(d)}{d} \leq \sqrt{2} \sum_{1
  \leq d \leq \ell} \frac{1}{d}
\end{equation*}
By Euler: $\gamma \leq \sum_{1}^{\ell}\frac{1}{d} - \log{\ell} \leq
1$, where $\gamma = 0.5770...$, is Euler's constant. Therefore
\begin{equation*}
\sum_{1 \leq d \leq \ell}\frac{\alpha(\frac{\ell}{d})\mu(d)}{d} =
\beta(\ell)\log{\ell}
\end{equation*}
where $\beta$ is a function such that $|\beta(\ell)| \leq \sqrt{2}
\bigg( 1 + \frac{1}{\log{2}}\bigg) < 6$ for all $\ell \geq 2$.  \par
On the other hand,
\begin{equation*}
\sum_{1 \leq d \leq \ell}\frac{\mu(d)}{d^2} = (\zeta(2))^{-1} -
\sum_{d = [\ell]+1}^{\infty}\frac{\mu(d)}{d^2} = \frac{6}{\pi^2} -
\sum_{d = [\ell]+1}^{\infty}\frac{\mu(d)}{d^2}
\end{equation*}
and
\begin{equation*}
\bigg| \sum_{d = [\ell]+1}^{\infty}\frac{\mu(d)}{d^2}\bigg| <
\int_{[\ell]}^{\infty}\frac{du}{u^2} = \frac{1}{[\ell]}
\end{equation*}
(where $[\cdot]$ is the greatest-integer function).  \par Finally,
since $A(\ell)= A(1)\ell^2 < \frac{1}{2}\ell^2$, and $p(1) < 2+
\sqrt{2}$ we have that
\begin{equation*}
N(\ell) = \frac{6}{\pi^2}A_e(\tau(\ell) + \eta (\ell)\ell\log{\ell})
\end{equation*}
where
\begin{equation*}
|\eta(\ell)| \leq (2 + 2\sqrt{2})\bigg( 1 + \frac{1}{\log{2}} \bigg) <
24.
\end{equation*}
\end{proof}
\begin{corollary}
The following estimates hold:
\begin{equation*}
\lim\limits_{\ell \to \infty} \frac{N(\ell)}{A_e(\tau(\ell))} =
\frac{6}{\pi^2}
\end{equation*}
and
\begin{equation*}
\lim\limits_{\ell \to \infty}\frac{N(\ell)}{\ell^2} =
\frac{6A(1)}{\pi^2}
\end{equation*}
\end{corollary}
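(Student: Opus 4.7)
The plan is to deduce both limits as direct consequences of Mertens' theorem (Theorem \ref{mertens}) proved immediately above, by dividing the main estimate by an appropriate quantity and controlling the error term.

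First I would recall from Mertens' theorem that
\[
N(\ell) = \frac{6}{\pi^{2}} A_{e}(\tau(\ell)) + \eta(\ell)\,\ell\log\ell,
\qquad |\eta(\ell)| < 24,
\]
and that $A_{e}(\tau(\ell)) = A(1)\,\ell^{2}$ with $A(1) = (\alpha_{2}-\alpha_{1})/2 > 0$ a fixed positive constant (depending only on the trapezium, not on $\ell$). Both limits will follow once we observe that the error term $\eta(\ell)\,\ell\log\ell$ is of lower order than $\ell^{2}$ as $\ell\to\infty$.

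For the first limit, I would divide the Mertens identity by $A_{e}(\tau(\ell)) = A(1)\ell^{2}$ to obtain
\[
\frac{N(\ell)}{A_{e}(\tau(\ell))} = \frac{6}{\pi^{2}} + \frac{\eta(\ell)\log\ell}{A(1)\,\ell}.
\]
Since $|\eta(\ell)| < 24$ and $\log\ell/\ell \to 0$ as $\ell \to \infty$, the second summand tends to zero, giving the first claim. For the second limit, I would instead divide by $\ell^{2}$, yielding
\[
\frac{N(\ell)}{\ell^{2}} = \frac{6}{\pi^{2}}\cdot\frac{A_{e}(\tau(\ell))}{\ell^{2}} + \frac{\eta(\ell)\log\ell}{\ell} = \frac{6\,A(1)}{\pi^{2}} + \frac{\eta(\ell)\log\ell}{\ell},
\]
and the same boundedness of $\eta$ together with $\log\ell/\ell\to 0$ yields the second claim.

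There is no real obstacle here: once Mertens' theorem is in hand, the corollary is a purely algebraic manipulation followed by the elementary limit $\log\ell/\ell\to 0$. The only small point worth emphasizing is that $A(1) > 0$ thanks to the hypothesis $\alpha_{1} < \alpha_{2}$, so dividing by $A(1)\ell^{2}$ is legitimate for all sufficiently large $\ell$.
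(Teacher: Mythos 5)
Your proof is correct and is exactly the argument the paper implicitly intends: the corollary is stated without proof as an immediate consequence of Theorem \ref{mertens}, and dividing the Mertens estimate by $A_{e}(\tau(\ell))=A(1)\ell^{2}$ (resp.\ by $\ell^{2}$) and using $\log\ell/\ell\to 0$ with $\eta$ bounded is precisely what makes it immediate. Nothing to add.
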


\noindent\textit{Remark.}  Let $ \breve{\tau}(\ell) \coloneqq \{(u,v)
\in \tau(\ell)| v < \ell \}$ denote $\tau(\ell)$ minus its base, and
let $\breve{N}(\ell)$ denote the number of lattice points with
relatively prime coordinates contained in $\breve{\tau}(\ell)$. Then
\begin{equation*}
\breve{N}(\ell) = N(\ell) + K(\ell)\ell, \quad where \; |K(\ell)| \leq
1.
\end{equation*}

\par Therefore
\begin{eqnarray}\label{equation16}
\breve{N}(\ell) &=& \frac{6}{\pi^2}A(\tau(\ell)) +
\breve{\eta}(\ell)\ell \log \ell, \quad \text{where}
\\ |\breve{\eta}(\ell)| & \leq & 2 + (2 + 2 \sqrt{2})(1 +
\frac{1}{\log 2}) \quad \text{ for all } \ell \geq 2 \nonumber
\end{eqnarray}
\par Let $\ell > 2$, $0 < \alpha_1 < \alpha_2 \leq 1$, $0 < \beta_1 <
\beta_2$. Let $\Delta(\ell)$ be the trapezium in $\mathbb{R}_+^2$ defined as
\begin{equation*}
\Delta(\ell) = \{(u,v) \in \mathbb{R}_+^2 | \alpha_1 \leq u/v \leq \alpha_2,
\beta_1\ell \leq v \leq \beta \}, i.e.,
\end{equation*}
\begin{equation*}
\Delta(\ell) = \tau(\beta_2l) - \breve{\tau}(\beta_1\ell).
\end{equation*}
\par Let $\widehat{N}(\ell)$ be the number of lattice points with
relatively prime coordinates contained in $\Delta(\ell)$. Then
\begin{equation*}
\widehat{N}(\ell) = N(\beta_2\ell) - \breve{N}(\beta_1 \ell).
\end{equation*}
Therefore, by Theorem (\ref{mertens}) and (\ref{equation16}) we have
\begin{corollary}\label{corollary3.5}
The following equality holds
\begin{equation*}
\widehat{N}(\ell) = \frac{6}{\pi^2}A_e(\Delta(\ell)) +
\hat{\eta}(\ell)\ell  \log \ell
\end{equation*}
where
\begin{equation*}
|\hat{\eta}(\ell)| \leq 40[\beta_2 - \beta_1 + \beta_2 \log \beta_2 -
  \beta_1 \log \beta_1 + 1]
\end{equation*}
for all $\ell \geq \text{max}\{2, 2\beta_1^{-1}, e^{\beta_{1}}\}$.
\end{corollary}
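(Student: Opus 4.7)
The plan is to apply Theorem \ref{mertens} to $\tau(\beta_2\ell)$ and the companion estimate \eqref{equation16} to $\breve{\tau}(\beta_1\ell)$, then subtract and repackage the error. First I would check that the hypothesis $\ell \geq \max\{2,2\beta_1^{-1},e^{\beta_1}\}$ forces both $\beta_1\ell \geq 2$ and (since $\beta_2 > \beta_1$) $\beta_2\ell \geq 2$, so that both formulas apply. The decomposition $\Delta(\ell) = \tau(\beta_2\ell)\setminus\breve\tau(\beta_1\ell)$ is precisely why $\breve\tau$ was introduced: by removing the base segment $\{v=\beta_1\ell\}$ from $\tau(\beta_1\ell)$, the sets $\Delta(\ell)$ and $\breve\tau(\beta_1\ell)$ are disjoint, so $\widehat{N}(\ell) = N(\beta_2\ell) - \breve{N}(\beta_1\ell)$, and additivity of the (Euclidean) area gives $A_e(\tau(\beta_2\ell))-A_e(\tau(\beta_1\ell)) = A_e(\Delta(\ell))$.

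Subtracting the two expansions produces the main term $\tfrac{6}{\pi^2}A_e(\Delta(\ell))$ plus the error
\[
E(\ell) = \eta(\beta_2\ell)\,\beta_2\ell\log(\beta_2\ell) - \breve\eta(\beta_1\ell)\,\beta_1\ell\log(\beta_1\ell),
\]
and the task reduces to writing $E(\ell) = \hat\eta(\ell)\,\ell\log\ell$ with $|\hat\eta|$ satisfying the stated bound. Expanding $\log(\beta_i\ell) = \log\ell + \log\beta_i$ and pulling $\ell\log\ell$ out yields
\[
\hat\eta(\ell) = \bigl[\eta(\beta_2\ell)\beta_2 - \breve\eta(\beta_1\ell)\beta_1\bigr] + \frac{\eta(\beta_2\ell)\beta_2\log\beta_2 - \breve\eta(\beta_1\ell)\beta_1\log\beta_1}{\log\ell}.
\]
The third hypothesis $\ell \geq e^{\beta_1}$ enters here: it guarantees $\log\ell$ is bounded below (in particular away from $0$), so that the second term does not blow up and can be controlled by $\beta_2\log\beta_2 - \beta_1\log\beta_1$ plus a constant slack.

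The remaining work is the bookkeeping needed to fit everything inside $40[\beta_2-\beta_1+\beta_2\log\beta_2-\beta_1\log\beta_1+1]$. I would bound each bracket using $|\eta| < 24$ from Theorem \ref{mertens} and $|\breve\eta| \leq 2 + (2+2\sqrt{2})(1+1/\log 2)$ from \eqref{equation16}, regrouping the combinations $|\eta\beta_2 - \breve\eta\beta_1|$ via the triangle inequality so that they decompose as a multiple of $\beta_2-\beta_1$ plus lower-order contributions that get absorbed into the constant $1$. The factor $40$ is tuned to swallow these universal constants together with the slack introduced by $\log\ell \geq 1$. The main obstacle is nothing conceptual but simply keeping the arithmetic tight enough that the final coefficient stays at $40$; no ingredient beyond the two counting estimates already in hand is needed.
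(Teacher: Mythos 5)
Your route is the same as the paper's: the paper gives no explicit proof of this corollary, simply invoking Theorem \ref{mertens} and (\ref{equation16}) on the decomposition $\widehat N(\ell) = N(\beta_2\ell) - \breve N(\beta_1\ell)$, which is exactly what you do. Your checks on the hypotheses (so that $\beta_1\ell,\beta_2\ell\geq 2$ and $\log\ell$ is bounded away from $0$) and the expansion $\log(\beta_i\ell)=\log\ell+\log\beta_i$ leading to
\[
\hat\eta(\ell)=\bigl[\eta(\beta_2\ell)\beta_2-\breve\eta(\beta_1\ell)\beta_1\bigr]+\frac{\eta(\beta_2\ell)\beta_2\log\beta_2-\breve\eta(\beta_1\ell)\beta_1\log\beta_1}{\log\ell}
\]
are correct and are the right reduction.

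The gap is the final "bookkeeping" step. You assert that $|\eta(\beta_2\ell)\beta_2-\breve\eta(\beta_1\ell)\beta_1|$ "decomposes as a multiple of $\beta_2-\beta_1$ plus lower-order contributions that get absorbed into the constant $1$," but $\eta(\beta_2\ell)$ and $\breve\eta(\beta_1\ell)$ are independent error quantities (each of magnitude up to roughly $12$ and $14$ respectively) with no mechanism for cancellation, so the triangle inequality yields only $|\eta(\beta_2\ell)\beta_2-\breve\eta(\beta_1\ell)\beta_1|\leq 12\,\beta_2+14\,\beta_1$ — a bound scaling with $\beta_1+\beta_2$, not $\beta_2-\beta_1$. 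Taking $\beta_1\approx\beta_2\approx K$ with $K$ large (and $\ell$ as large as the hypotheses demand) gives a contribution of order $26K$ while the stated bound $40[\beta_2-\beta_1+\beta_2\log\beta_2-\beta_1\log\beta_1+1]$ collapses to about $40$. So the specific numerical constant in the corollary cannot be reached by the route you describe; either it requires an additional idea neither you nor the paper supplies, or the paper's displayed constant is simply off. What your argument does establish — and all that is actually used downstream in Theorem \ref{Theorem 3.18} — is that $\hat\eta(\ell)$ is bounded uniformly in $\ell\geq\max\{2,2\beta_1^{-1},e^{\beta_1}\}$ by some constant depending only on $\beta_1,\beta_2$.
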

\begin{corollary}\label{corollary3.6}
The following formula holds:
\begin{equation*}
\lim\limits_{\ell \to \infty} \frac{\hat{N}(\ell)}{l^2} =
\frac{6}{\pi^2}A_e(\Delta(1)).
\end{equation*}
\end{corollary}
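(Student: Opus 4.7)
The statement follows essentially immediately from Corollary \ref{corollary3.5} combined with a simple scaling observation, so the plan is very short.

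First I would note that the trapezium $\Delta(\ell)$ is, by its defining inequalities $\alpha_1 \le u/v \le \alpha_2$ and $\beta_1\ell \le v \le \beta_2\ell$, the image of $\Delta(1)$ under the homothety $\mu_\ell(u,v)=(\ell u,\ell v)$ of Euclidean $\mathbb{R}_+^2$. (Equivalently, one reads this directly from the decomposition $\Delta(\ell)=\tau(\beta_2\ell)-\breve\tau(\beta_1\ell)$ together with the identity $\tau(\ell)=\mu_\ell(\tau(1))$ built into the definition of $\tau(\ell)$.) Since $\mu_\ell$ scales Euclidean area by $\ell^2$, this yields
\[
A_e(\Delta(\ell)) \;=\; \ell^{2}\, A_e(\Delta(1)).
\]

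Next, I would invoke Corollary \ref{corollary3.5}, which gives
\[
\widehat N(\ell)\;=\;\frac{6}{\pi^{2}}\,A_e(\Delta(\ell))\;+\;\hat\eta(\ell)\,\ell\log\ell,
\]
with $|\hat\eta(\ell)|$ bounded by a constant $K=K(\beta_1,\beta_2)$ depending only on $\beta_1,\beta_2$ (not on $\ell$) for all sufficiently large $\ell$. Dividing by $\ell^{2}$ and inserting the scaling relation above I obtain
\[
\frac{\widehat N(\ell)}{\ell^{2}}\;=\;\frac{6}{\pi^{2}}\,A_e(\Delta(1))\;+\;\hat\eta(\ell)\,\frac{\log\ell}{\ell}.
\]

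Finally, since $|\hat\eta(\ell)|\le K$ eventually and $\log\ell/\ell\to 0$ as $\ell\to\infty$, the error term tends to zero and the claimed limit follows. There is no real obstacle in this argument: everything nontrivial was absorbed into Mertens' theorem and the subsequent corollary; here one simply combines the asymptotic $\widehat N(\ell)\sim \frac{6}{\pi^2}A_e(\Delta(\ell))$ with the observation that the main term scales quadratically while the error is only $O(\ell\log\ell)$.
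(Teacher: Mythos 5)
Your argument is correct and is exactly the intended route: the paper states Corollary \ref{corollary3.6} as an immediate consequence of Corollary \ref{corollary3.5}, and the only ingredient needed is the scaling relation $A_e(\Delta(\ell))=\ell^2 A_e(\Delta(1))$ together with the observation that $\hat\eta(\ell)\,\ell\log\ell/\ell^2\to 0$. Nothing to add.
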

The following lemma depends only on two properties: the infinitude of
primes and the fact that $\varphi(p)= p-1$ if $p$ is a prime.
\begin{lemma}\label{lemma3.7}
Keeping the notation of Theorem (\ref{mertens}) we have:
\begin{equation*}
\overline{\lim\limits_{\ell \to \infty}}\bigg( \bigg|
\frac{N(\ell)}{A(\ell)} -
\frac{6}{\pi ^2} \bigg| \ell^{\frac{3}{2} -\epsilon}\bigg) = \infty,
\end{equation*}
for every $\epsilon$ such that $ -\infty < \epsilon < \frac{1}{2} $.
\end{lemma}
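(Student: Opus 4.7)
The plan is to exploit the discontinuities of $N(\ell)$ at integer values and contrast them with the smoothness of the main term $\tfrac{6}{\pi^2}A(\ell)$. Set
\[
E(\ell) := N(\ell) - \tfrac{6}{\pi^2}A(\ell),
\]
keeping the hypotheses $0 < \alpha_1 < \alpha_2 \le 1$ of Theorem \ref{mertens}. Because $A(\ell) = A(1)\ell^{2}$ is continuous while $N$ is constant on each half-open interval $[n-1,n)$ and right-continuous with a jump at each integer $n$, the function $E$ inherits exactly the same jumps as $N$.

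The jump of $N$ at an integer $n$ equals
\[
f(n) := \#\bigl\{a \in \mathbb{Z} : \alpha_1 n \le a \le \alpha_2 n,\ \gcd(a,n)=1\bigr\},
\]
i.e.\ the number of coprime-coordinate lattice points on the horizontal segment $\{v = n\} \cap \tau(n)$. For $n = p$ prime the condition $\gcd(a,p) = 1$ can fail only if $p \mid a$, and among the integers of $[\alpha_1 p,\alpha_2 p] \subset (0,p]$ the only possible multiple of $p$ is $p$ itself, which occurs only if $\alpha_2 = 1$. Since the number of integers in $[\alpha_1 p, \alpha_2 p]$ is $(\alpha_2 - \alpha_1)\,p + O(1)$, the identity $\varphi(p) = p - 1$ gives
\[
f(p) \;=\; (\alpha_2 - \alpha_1)\,p + O(1),
\]
and in particular $f(p) \ge \tfrac{1}{2}(\alpha_2 - \alpha_1)\,p$ for all sufficiently large primes $p$.

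Since $E(p) - E(p^-) = f(p)$, the triangle inequality forces
\[
\max\bigl(|E(p)|,\ |E(p^-)|\bigr) \;\ge\; \tfrac{1}{2}\,f(p) \;\ge\; \tfrac{1}{4}(\alpha_2-\alpha_1)\,p
\]
for every sufficiently large prime $p$. By the infinitude of primes one therefore obtains a sequence $\ell_k \to \infty$---with each $\ell_k$ equal to either $p_k$ or a value just below $p_k$, whichever realizes the larger side of the inequality above---satisfying $|E(\ell_k)| \ge c\,\ell_k$ with $c := \tfrac{1}{4}(\alpha_2 - \alpha_1) > 0$. For any $\epsilon < \tfrac{1}{2}$ one then computes
\[
\left|\frac{N(\ell_k)}{A(\ell_k)} - \frac{6}{\pi^{2}}\right|\ell_k^{\,3/2 - \epsilon} \;=\; \frac{|E(\ell_k)|}{A(1)\,\ell_k^{2}}\,\ell_k^{\,3/2-\epsilon} \;\ge\; \frac{c}{A(1)}\,\ell_k^{\,1/2 - \epsilon} \;\longrightarrow\; \infty,
\]
which is the required conclusion. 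I do not anticipate a real obstacle: once one spots that the prime-indexed jumps of $N$ are the source of irregularity, the rest is bookkeeping. The only delicate point is that $\ell_k$ may have to be chosen \emph{just below} $p_k$ rather than at $p_k$, depending on which of $E(p_k)$ or $E(p_k^-)$ is larger in absolute value.
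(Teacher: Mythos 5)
Your argument is correct, and it takes a genuinely different and more direct route than the paper's. The paper argues by contradiction: assuming boundedness of $\big(N(\ell)/A(\ell)-6/\pi^2\big)\ell^{3/2-\epsilon}$, it derives a one-step recursion for $H(\ell)=N(\ell)/A(\ell)$, introduces the strip count $\omega(\ell)$ and its approximation $\nu(\ell)$, and then isolates two expressions $L(\ell)$ and $R(\ell)$ that must be equal, with $R$ bounded but $L\to-\infty$ along the sequence of $\ell$ with $[\ell+1]$ prime. Your proof bypasses all of that apparatus: you observe that $E(\ell)=N(\ell)-\tfrac{6}{\pi^2}A(\ell)$ jumps by $f(p)\sim(\alpha_2-\alpha_1)p$ at each prime $p$, so at least one of the two one-sided values of $E$ near $p$ has magnitude $\gtrsim p$, and dividing by $A(1)\ell^2$ and multiplying by $\ell^{3/2-\epsilon}$ gives $\gtrsim\ell^{1/2-\epsilon}\to\infty$. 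Both proofs hinge on exactly the two facts the paper flags before the lemma---the infinitude of primes and $\varphi(p)=p-1$ (equivalently, that almost all of $(0,p]$ is coprime to $p$)---but yours gets a positive lower bound directly via the triangle inequality rather than extracting a contradiction from an assumed asymptotic, which makes it shorter and easier to audit. Two cosmetic points: the constant $c=\tfrac14(\alpha_2-\alpha_1)$ should be degraded slightly (say to any $c'<c$) when $\ell_k$ is taken strictly below $p_k$, since $|E(\ell_k)|$ only approaches $|E(p_k^-)|$ in the limit; and $N$ is constant on each $[n,n+1)$ rather than needing the phrasing ``$[n-1,n)$''---but neither affects the validity of the argument.
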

\begin{proof}
Suppose the lemma is not true for some $\epsilon \in (-\infty, 1/2)$.
Then there exists a bounded function $B_{\epsilon}(\ell)$, bounded for
all $\ell > 2$, such that
\begin{equation}\label{equation17}
\frac{N(\ell)}{A(\ell)} - \frac{6}{\pi^2} =
B_{\epsilon}(\ell)\ell^{-\frac{3}{2} + \epsilon}, \quad (\ell> 0).
\end{equation}
Define $H(\ell)$ by
\begin{equation*}
H(\ell) = \frac{N(\ell)}{A(\ell)},
\end{equation*}
then
\begin{equation}\label{equation18}
H(\ell + 1) = H(\ell) \frac{A(\ell)}{A(\ell + 1)} + \frac{\omega
(\ell)}{A(\ell + 1)} =
\bigg(\frac{\ell}{\ell + 1} \bigg)^2 H(\ell) +
\frac{\omega(\ell)}{A(1) (\ell + 1)^2},
\end{equation}
where $\omega(\ell)$ is the number of lattice points with relatively
prime coordinates contained in $\tau(\ell + 1) - \tau(\ell)$ i.e.
\begin{equation*}
\omega(\ell) = \{(a,b) \in \mathbb{Z}^+ \times \mathbb{Z}^+ | \{a,b \}=1, (a,b)
\in \tau (\ell + 1), b= [\ell + 1] \}.
\end{equation*}
If $[\ell + 1]$ is a prime, then $\omega(\ell) = \# \{a \in \mathbb{N}|\;
\alpha_1 \leq a /[\ell + 1] \leq \alpha_2, a < [\ell +1] \}$. \par
Now, $\tau(\ell + 1) - \tau(\ell)$ is a trapezium of unit height,
with its bottom side missing and whose non parallel sides have
positive slope greater than or equal to one (and whose parallel sides
are also parallel to the horizontal axis). Therefore it must contain
at least $\frac{(\alpha_2 - \alpha_1)(2\ell + 1)}{2} -2$ lattice
points and at most $\frac{(\alpha_2 - \alpha_1)(2\ell + 1)}{2} + 3$
such points.  \par Hence, if $[\ell + 1]$ is a prime , then
\begin{equation}\label{equation19}
\omega(\ell)= \frac{(\alpha_2 - \alpha_1)(2\ell + 1)}{2} +
\nu(\ell),
\quad \text{ where } |\nu(\ell)| \leq 3.
\end{equation}
From (\ref{equation17}) and (\ref{equation19}) we obtain
\begin{equation*} \bigg[\frac{6}{\pi^2} +
B_\epsilon(\ell)\ell^{-\frac{3}{2} + \epsilon} \bigg] \bigg[
\frac{\ell}{\ell + 1} \bigg]^2 + \frac{\omega(\ell)}{A(1)(\ell +
1)^2} = \frac{6}{\pi^2} + B_{\epsilon}(\ell +
1)(\ell+1)^{-\frac{3}{2} + \epsilon} \end{equation*}
Hence
\begin{eqnarray}\label{equation20}
L(\ell) &\coloneqq& \bigg[\frac{6}{\pi^2} \bigg[ 1- \bigg(
\frac{\ell}{\ell +
      1} \bigg)^2 \bigg] - \frac{\omega(\ell)}{A(1) (\ell + 1)^2}\bigg]
(\ell+1)^{3/2 - \epsilon} \\ &=& B_{\epsilon}(\ell)\bigg[ \frac{l}{
\ell +
    1}\bigg]^{\frac{1}{2} + \epsilon} - B_{\epsilon}(\ell + 1) \eqqcolon
R(\ell)\nonumber
\end{eqnarray}
\par Now considerer formula (\ref{equation20}) for all $\ell > 2$ such
that $[\ell + 1]$ is a prime . Then, using (\ref{equation19}) we have
\begin{equation*}
L(\ell) = \left[ \left(\frac{6}{\pi^2} -1 \right) \left( \frac{2\ell
+ 1}{(\ell
    + 1)^2} \right) - \frac{\nu(\ell)}{A(1) (\ell + 1)^2}\right]
(\ell+1)^{3/2 - \epsilon} \quad ([\ell + 1] \text{ a prime })
\end{equation*}
(recallig that $A(1) = (\alpha_2 - \alpha_1)/2$). But now we arrive to
a contradiction since under the hypothesis, $R(\ell)$ is bounded for all
$\ell > 2$ whereas $L(\ell)$ tends to infinity when $l$ tends to infinity by
a sequence $\{\ell_n\}$ such that $[\ell_n + 1] $ is prime.
\end{proof}
\begin{remark}
The fact that the set of all $\ell > 2$ for which $[\ell + 1]$ is a prime,
is infinite, is all that we used. Hence, it does not depend on the
triangle.
\end{remark}
\begin{remark} When $\epsilon$ above is very close to $1/2$, then
$\theta(\ell ) \coloneqq \left[ \frac{N(\ell)}{A(\ell)} - \frac{6}{\pi^2}
    \right] \ell^{3/2 - \epsilon}$ oscillates and becomes unbounded
  extremely slowly. Also it follows that $\theta_{\delta,
    c}(\ell)\coloneqq \left[\frac{N(\ell)}{A(\ell)} - c \right] \ell^{\delta +
    3/2}$ is $O(1)$ as $\ell \to \infty$ if and only if $c =
  \frac{6}{\pi^2}$ and $\delta < -\frac{1}{2}$. When $\alpha_1$ and
  $\alpha_2$ are rational and we let $\ell$ go to infinity through
  natural numbers then $N(\ell) / A(\ell)$ are (not very good) rational
  approximations of $\frac{6}{\pi^2}$.
\end{remark}
\par In the context  of Corollary \ref{corollary3.5}, let
$\stackrel{\circ}{\Delta}(\ell)$ denote the trapezium $\Delta(\ell)$
minus its open base:
\begin{equation*}
\stackrel{\circ}{\Delta}(\ell) = \{ (u,v) \in \mathbb{R}_+^2 | 0 < \alpha_1
\leq u/v \leq \alpha_2 \leq 1, \beta_1\ell < v \leq \beta_2\ell \} .
\end{equation*}
\par Then if $\stackrel{\bullet}{N}(\ell)$ denotes the number of lattice points with relatively
prime coordinates contained in $\stackrel{\circ}{\Delta}(\ell)$, we have
\begin{equation}\label{equation21}
\stackrel{\bullet}{N}(\ell) = \hat{N}(\ell) + \xi (l) \beta_1\ell, \text{ where
} -1 \leq \xi(\ell) \leq 0, \text{ for all } \ell > 2.
\end{equation}
\begin{corollary}\label{corollary3.15}
For every $\epsilon > 0$ such that $-\infty < \epsilon < 1/2$ we have
\begin{equation*}
\overline{\lim\limits_{\ell \to \infty}} \left( \left|
\frac{\hat{N}(\ell)}{A_e(\Delta (\ell))} - \frac{6}{\pi^2}
\right|\ell^{3/2 - \epsilon}  \right) = \infty.
\end{equation*}
\end{corollary}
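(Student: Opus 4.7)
The plan is to mimic the proof of Lemma~\ref{lemma3.7}, exploiting the infinitude of primes to force a large jump of $\hat N$ that contradicts the assumed smoothness. Suppose for contradiction that for some $\epsilon < 1/2$ the limsup is finite; then there exist constants $M, \ell_\ast > 0$ with
$$\left|\frac{\hat N(\ell)}{A_e(\Delta(\ell))} - \frac{6}{\pi^2}\right| \leq M\,\ell^{-3/2+\epsilon}, \quad \ell \geq \ell_\ast.$$
Because $A_e(\Delta(\ell)) = \tfrac{1}{2}(\alpha_2-\alpha_1)(\beta_2^2-\beta_1^2)\ell^2$, this is equivalent to an absolute bound
$$\bigl|\hat N(\ell) - (6/\pi^2)\,A_e(\Delta(\ell))\bigr| \leq K\,\ell^{1/2+\epsilon}$$
for a fixed constant $K>0$.

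The next step is to analyze the jump structure of the integer-valued step function $\ell\mapsto\hat N(\ell)$. It is locally constant and jumps only at the discrete set of $\ell$ at which $\beta_1\ell$ or $\beta_2\ell$ is a positive integer: as $\ell$ crosses $n/\beta_2$ the top edge $v=\beta_2\ell$ sweeps past the line $v=n$ and the coprime lattice points on that line are added to $\hat N$; as $\ell$ crosses $m/\beta_1$ the bottom edge similarly sweeps past $v=m$ and those points are subtracted. Given a prime $p$, set $\ell_0 = p/\beta_2$. Since $p$ is prime and $0<\alpha_1<\alpha_2\leq 1$, every integer $u\in[\alpha_1 p,\alpha_2 p]$ is automatically coprime to $p$ (with the single possible exception $u=p$ when $\alpha_2=1$), so the jump of $\hat N$ at $\ell_0$ is
$$\hat N(\ell_0) - \hat N(\ell_0^-) = (\alpha_2-\alpha_1)\,p + O(1) = (\alpha_2-\alpha_1)\,\beta_2\,\ell_0 + O(1).$$

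Finally I would apply the assumed error bound at $\ell_0$ and at a point $\ell_0-\delta$ with $\delta>0$ chosen small enough that no other jump of $\hat N$ lies in $(\ell_0-\delta,\ell_0)$ (possible because the jump points are locally finite). Letting $\delta\to 0^+$ and using continuity of $A_e(\Delta(\cdot))$ transfers the bound to the left-limit, giving $|\hat N(\ell_0^-) - (6/\pi^2)A_e(\Delta(\ell_0))| \leq K\,\ell_0^{1/2+\epsilon}$; subtracting from the analogous bound at $\ell_0$ yields
$$\hat N(\ell_0) - \hat N(\ell_0^-) = O\bigl(\ell_0^{1/2+\epsilon}\bigr),$$
in direct contradiction with the jump estimate $\Theta(\ell_0)$ since $\epsilon<1/2$. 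Letting $p$ range over the infinite set of primes drives $\ell_0\to\infty$, proving the stated divergence of the limsup. The only point in the argument that requires any care is the left-limit step, which reduces to the discreteness of the jump set and the continuity of $A_e(\Delta(\cdot))$.
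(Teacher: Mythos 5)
Your proof is correct, and it takes a genuinely different route from the paper's. The paper proves Corollary~\ref{corollary3.15} by a reduction to Lemma~\ref{lemma3.7}: it introduces the half-open trapezium $\stackrel{\circ}{\Delta}(\ell)$, sets $\delta=\beta_1/\beta_2$, and uses the telescoping tiling $\tau(\beta_2\ell)=\bigsqcup_{n\geq0}\stackrel{\circ}{\Delta}(\delta^n\ell)$ to show that a hypothetical $O(\ell^{1/2+\epsilon})$ bound for the trapezium count $\hat N$ would propagate (by summing the geometric tiling) to the same kind of bound for the triangle count $N$, contradicting Lemma~\ref{lemma3.7}. You instead run the prime-jump argument underlying Lemma~\ref{lemma3.7} directly on the trapezium: as $\ell$ crosses $p/\beta_2$ the top edge sweeps past $v=p$, and primality of $p$ together with $0<\alpha_1<\alpha_2\leq1$ makes essentially every lattice point on that line coprime, forcing a jump of size $(\alpha_2-\alpha_1)\beta_2\ell_0+O(1)=\Theta(\ell_0)$, incompatible with an $O(\ell_0^{1/2+\epsilon})$ error because the jump set is discrete and $A_e(\Delta(\cdot))$ is continuous. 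One small point worth making explicit (it does not break the argument): $\ell_0=p/\beta_2$ could in principle also be a bottom-edge crossing $m/\beta_1$, but since the constraint is $\beta_1\ell\leq v$, the point $v=m$ is still counted at $\ell=\ell_0$ and leaves only afterward, so the left-limit jump $\hat N(\ell_0)-\hat N(\ell_0^-)$ is exactly the top-edge contribution in any case. Compared to the paper, your argument is more self-contained and avoids the somewhat delicate bookkeeping of the telescoping sum (infinite series, the cutoff index $m$, boundary conventions between $\Delta$, $\stackrel{\circ}{\Delta}$, and $\breve\tau$); the paper's route, on the other hand, reuses Lemma~\ref{lemma3.7} as a black box and exposes the geometric fact that trapezia tile the triangle under the similarity $\mu_{2\log\delta}$. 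Both rest on the same arithmetic input: the infinitude of primes and $\varphi(p)=p-1$.
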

\begin{proof}
Suppose the corollary is false for same $\epsilon$, $\epsilon \in
(-\infty, 1/2)$. Then, just as in Lemma (\ref{lemma3.7}), there exists
a function $B_{\epsilon}(\ell)$ bounded for all $\ell > 2$ such that
\begin{equation*}
\frac{\hat{N}(\ell)}{A_e(\Delta(\ell))} -\frac{6}{\pi^2} =
B_{\epsilon}(\ell)\ell^{-3/2 + \epsilon}.
\end{equation*}
Then by (\ref{equation21}), we have
\begin{equation*}
-\ell^{3/2 - \epsilon}\left|
\frac{\hat{N}(\ell)}{A_e(\Delta(\ell))}
-\frac{6}{\pi^2} \right| \leq
\ell^{3/2 + \epsilon} \left|
\frac{\stackrel{\bullet}{N}(\ell)}{A_e(\Delta(\ell))}
-\frac{6}{\pi^2}  \right| \leq \ell^{3/2 - \epsilon}\left|
\frac{\hat{N}(\ell)}{A_e(\Delta(\ell))} -\frac{6}{\pi^2} \right|.
\end{equation*}
Hence, we also have
\begin{equation*}
\overline{\lim\limits_{\ell \to \infty}}\ell^{3/2 + \epsilon} \left|
\frac{\stackrel{\bullet}{N}(\ell)}{A_e(\Delta(\ell))}
-\frac{6}{\pi^2} \right| < \infty \quad (\ell > 2, -\infty < \epsilon <
1/2)
\end{equation*}
\par Therefore, under the hypothesis, we conclude the existence of a
function $\stackrel{\bullet}{B}_{\epsilon}(\ell)$ whose absolute
value  is bounded by some
positive constant for all $\ell > 2$, and such that
\begin{equation*}
\stackrel{\bullet}{N}(\ell) = \frac{6}{\pi^2} A_{e}(\Delta(\ell)) +
A_{e}(\Delta(\ell))\stackrel{\bullet}{B}(\ell)\ell^{-3/2 + \epsilon}
\quad (\ell >
2, -\infty < \epsilon < 1/2)
\end{equation*}
\begin{equation*}
\stackrel{\bullet}{B}_{\epsilon}(\ell) = -\frac{6}{\pi^2} \ell^{3/2 -
  \epsilon}\quad \text{ if } \ell < \beta_2^{-1}.
\end{equation*}
\par Now, let $\delta = \beta_1 \beta_2^{-1}$. Then
\begin{eqnarray*}
N(\beta_2\ell) &=&
\sum_{n=0}^{\infty}\stackrel{\bullet}{N}(\delta^n\ell)
\\ &=& \sum_{n=0}^{\infty}\frac{6}{\pi^2} A_e(\Delta(\delta^n\ell)) +
A_e(\Delta(\delta^n\ell))\stackrel{\bullet}{B}(\delta^n \ell)(\delta^n
\ell)^{-3/2 + \epsilon} \\ &=& \frac{6}{\pi^2} A_e(\tau(\beta_2\ell)) +
\left( \sum_{n=0}^{\infty}A_e(\Delta(\delta^n\ell))
\stackrel{\bullet}{B}(\delta^n \ell) (\delta^n \ell)^{-3/2 +
  \epsilon}\right)\\
  & & +
  \sum_{n=m}^{\infty}\left(-\frac{6}{\pi^2}\right)A_e(\Delta(\delta^n\ell)) 
\end{eqnarray*}
where $m = \text{min}\{n \in \mathbb{N}|\; \ell < \delta^{-n}\beta_2^{-1} \}$ and
where $N(\cdot)$, $\tau(\cdot)$ are exactly as in Theorem
\ref{mertens}. Therefore, we have
\begin{eqnarray}\label{equation22}
N(\beta_2\ell) & = & \frac{6}{\pi^2}A_e(\tau(\beta_2\ell)) + \left\{
\sum_{n=0}^{\infty}A_e(\Delta(\delta^n
l))\stackrel{\bullet}{B}(\delta^n\ell)(\delta^n\ell)^{-3/2 +
\epsilon} \right\} \\
& & - \frac{6}{\pi^2} A_e(\tau(\delta^{m+1}(\beta_2\ell))).\nonumber
\end{eqnarray}
But (\ref{equation22}) implies that,
\begin{equation*}
N(\ell) = \frac{6}{\pi^2} A_e(\tau(\ell)) +
A_e(\tau(\ell))\bar{B}_{\epsilon}(\ell)\ell^{-3/2 + \epsilon}, \quad
\infty < \epsilon < 1/2,
\end{equation*}
where $\overline{B}_{\epsilon}(\ell)$ is bounded for all $\ell > 2$.
But this contradicts Lemma (\ref{lemma3.7}), therefore, Corollary
\ref{corollary3.6} must be true.
\end{proof}
\par Let us recall the functions $T_t$, $\Psi$ and $\mu_t$ given by
formulas (\ref{equation8}), (\ref{equation10}) and (\ref{equation12})
and connected by formula (\ref{equation13}): $\Psi \circ \mu_t = T_t
\circ \Psi$.  \par Let $C \subset M$ by any adapted box whose base
$\beta(C)$ corresponds to the rectangle
\begin{equation*}
Q \subset \mathbb{H}, \quad Q \coloneqq \{(x,y)| \alpha_1 \leq x \leq \alpha_2
\quad \beta_1 \leq y \leq \beta_2 \}
\end{equation*}
where $0 < \alpha_1 < \alpha_2 \leq 1$ and $0 < \beta_1 < \beta_2$. Let
$\delta(1)$ be the trapezium in $\mathbb{R}_{+}^2$ such that $\Psi(\delta(1))
= Q$. Then
\begin{equation*}
\Psi(\Delta(e^{t/2})) = Q_t = T_t(\Psi(\Delta(1))), \quad (t \in \mathbb{R})
\end{equation*}
where $Q_t$ is the rectangle which corresponds to the adapted box
$g_{-t}(C)$. As before, let $n(t) = \# \{\gamma_0 \cap
g_{-t}(\beta(C)) \}$. Then, (by (\ref{1.1}), (\ref{equation9}),
(\ref{equation14})), we have
\begin{equation*}
n(t) = \hat{N}(e^{t/2}).
\end{equation*}
By Corollary \ref{corollary3.5} we have that the function $n(t)$ must
be of the form:
\begin{equation}\label{equation23}
n(t) = \frac{6}{\pi^2}A_e(\Delta(e^{\frac{t}{2}})) +
\hat{\eta}(e^{\frac{t}{2}})\frac{te^{\frac{1}{2}}}{2},
\end{equation}
where $\hat{\eta}$ is a function bounded as follows:
\begin{equation*}
\big|\eta(e^{\frac{t}{2}})\big| \leq 40 (\beta_1^{-1/2} + \beta_2^{-1/2} +
\frac{1}{2}(\beta_2^{-1/2}\log \beta_2- \beta_1^{-1/2}\log\beta_1 + 1 ) )
\end{equation*}
\par for all $t \geq 2\text{max}\{\log2, \log2 + \frac{1}{2}\log\beta_2,
\beta_2^{-\frac{1}{2}} \}$.  \par From (\ref{equation23}) we obtain
\begin{equation*}
n(t) = \frac{3}{\pi^2} A_h(g_{-t}(\beta(C))) +
\hat{\eta}(e^{\frac{t}{2}})\frac{te^{\frac{t}{2}}}{2}.
\end{equation*}
And hence we obtain:
\begin{equation*}
n(t) = \frac{3}{\pi^2} A_h((\beta(C)))e^t +
\hat{\eta}(e^{\frac{t}{2}})\frac{te^{\frac{t}{2}}}{2}.
\end{equation*}
From Corollary (\ref{corollary3.15}) we obtain
\begin{equation*}
\overline{\lim\limits_{t \to \infty}}\left[ \left(
  \frac{n(t)e^{-t}}{A_h(\beta(C))} - \frac{3}{\pi^2} \right) e^{\alpha
    t} \right] = + \infty
\end{equation*}
for every $\alpha > 1/2$.  \par Changing the parameter: $y = e^{-t}$
and denoting as usual by $m_y$ the horocycle measure concentrated in
the unstable horocycle periodic orbit as period $1 / y \quad (y > 0
)$ we obtain using formula (1.1)
\begin{theorem}
\label{Theorem 3.18} Let $C \subset M$ be any
adapted box. Then
\begin{equation*} m_y(C) = \overline{m}(C) +
K_{C}(y)y^{1/2}\log y
\end{equation*} where $|K_C(y)|$ is bounded by
some positive constant for all $0 < y \leq 1/2$. Furthermore, this
positive constant can be chosen to be the same for every adapted box
contained in $C$.
\end{theorem}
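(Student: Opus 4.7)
The plan is to combine the counting formula (\ref{1.1}) for $m_y(C)$ with the Mertens-type lattice-point estimate of Corollary \ref{corollary3.5}, using the identification between horocycle intersections and primitive lattice points already set up in Section 2. Fix the adapted box $C$ with base rectangle $Q\subset\mathbb{H}$ and height $\ell$, and let $y=e^{-t}$. By (\ref{1.1}), $m_y(C)=n(y,C)\,y\,\ell$, where $n(y,C)$ counts the transverse intersections of $\gamma_y$ with $\beta(C)$. The discussion culminating in (\ref{equation14}) identifies this count with $\hat{N}(e^{t/2})$, the number of primitive lattice points in the trapezium $\Delta(e^{t/2})\subset\mathbb{R}_+^2$ which satisfies $\Psi(\Delta(1))=Q$.

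Next, I would apply Corollary \ref{corollary3.5} at $\ell=e^{t/2}$:
\[
\hat{N}(e^{t/2}) \;=\; \frac{6}{\pi^2}\,A_e\bigl(\Delta(e^{t/2})\bigr) \;+\; \hat{\eta}(e^{t/2})\, e^{t/2}\,\tfrac{t}{2},
\]
with $|\hat{\eta}|$ bounded by a constant depending only on the $y$-extent $[\beta_1,\beta_2]$ of $Q$. The homothety $\mu_t$ of (\ref{equation12}) gives $A_e(\Delta(e^{t/2}))=e^{t}A_e(\Delta(1))$, and the Jacobian identity (\ref{equation11}) applied to $\Psi(\Delta(1))=Q$ yields $A_e(\Delta(1))=\tfrac12 A_h(Q)=\tfrac12 A$, where $A:=A_h(\beta(C))$. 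Substituting into (\ref{1.1}) and recognizing the main term $\tfrac{3}{\pi^2}A\ell=\overline{m}(C)$, I obtain
\[
m_y(C) \;=\; e^{-t}\ell\,\hat{N}(e^{t/2}) \;=\; \overline{m}(C) \;+\; \frac{\ell\, t\, e^{-t/2}}{2}\,\hat{\eta}(e^{t/2}).
\]
Changing back to the variable $y=e^{-t}$ (so $e^{-t/2}=y^{1/2}$ and $t=-\log y$) and setting $K_C(y):=-\tfrac{\ell}{2}\,\hat{\eta}(y^{-1/2})$ yields precisely $m_y(C)=\overline{m}(C)+K_C(y)\,y^{1/2}\log y$.

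All that remains is the boundedness of $K_C$ on $(0,1/2]$ together with the uniformity clause. The former follows from the explicit bound on $|\hat{\eta}|$ in Corollary \ref{corollary3.5}, which applies once $y^{-1/2}$ exceeds the threshold $\max\{2,2\beta_1^{-1},e^{\beta_1}\}$; on the compact complementary sub-interval of $y$-values one has $m_y(C),\overline{m}(C)\in[0,1]$, so the constant may be enlarged trivially. For the uniformity, any adapted sub-box $C'\subset C$ has parameters $\beta_1\leq\beta_1'<\beta_2'\leq\beta_2$ and $\ell'\leq\ell$; the expression $40[\beta_2'-\beta_1'+\beta_2'\log\beta_2'-\beta_1'\log\beta_1'+1]$ that bounds $|\hat\eta|$ is in turn dominated by a single constant on the compact parameter region of sub-boxes of $C$, so one $K=K(C)$ works uniformly. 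I expect no genuine obstacle here: the substantive arithmetic (the Mertens-type count of primitive lattice points with error $O(\ell\log\ell)$) has been discharged in Corollary \ref{corollary3.5}, and the present theorem is essentially that corollary translated through the $\Psi$-identification and the change of variable $y=e^{-t}$.
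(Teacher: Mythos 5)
Your proof is correct and follows essentially the same route as the paper: combine the intersection--count formula (\ref{1.1}), the identification $n(t)=\hat{N}(e^{t/2})$ via $\Psi$ and (\ref{equation14}), Corollary \ref{corollary3.5}, the Jacobian relation $A_e=\tfrac12 A_h\circ\Psi$, and the change of variable $y=e^{-t}$. Your explicit treatment of the threshold region and of the uniformity over sub-boxes merely spells out what the paper compresses into the remark that everything ``follows from (\ref{equation23})''.
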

(Compare with theorem \ref{T1.4})
\begin{corollary}
The following holds:
\begin{equation*}
\lim\limits_{y\to 0} m_y(C) = \overline{m}(C).
\end{equation*}
\end{corollary}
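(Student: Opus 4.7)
The plan is to deduce this corollary directly from Theorem \ref{Theorem 3.18}, which has already been established just above the statement. That theorem gives the quantitative expansion
\begin{equation*}
m_y(C) = \overline{m}(C) + K_C(y)\, y^{1/2}\log y,
\end{equation*}
with $|K_C(y)|$ bounded by a constant independent of $y$ on $(0,1/2]$. Thus the corollary reduces to checking that the error term vanishes in the limit $y \to 0^{+}$.

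First I would rearrange the identity from Theorem \ref{Theorem 3.18} to isolate the discrepancy,
\begin{equation*}
m_y(C) - \overline{m}(C) = K_C(y)\, y^{1/2}\log y,
\end{equation*}
and then estimate its absolute value using the uniform bound $|K_C(y)| \leq M$ valid on $(0,1/2]$, obtaining
\begin{equation*}
\bigl| m_y(C) - \overline{m}(C) \bigr| \leq M\, y^{1/2} |\log y|.
\end{equation*}
Next I would invoke the elementary calculus fact that $y^{1/2}|\log y| \to 0$ as $y \to 0^{+}$ (one-line L'Hôpital or direct: substitute $y = e^{-2s}$ to get $s e^{-s} \to 0$ as $s \to \infty$). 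Combining with the uniform bound gives $\bigl|m_y(C) - \overline{m}(C)\bigr| \to 0$, which is exactly the claim.

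There is no real obstacle here: all the work has been done in Theorem \ref{Theorem 3.18} and its chain of supporting results (Mertens' theorem \ref{mertens}, Corollary \ref{corollary3.5}, and the geometric reduction to the lattice-point count $\widehat{N}(\ell)$). The corollary is merely the qualitative extraction from the quantitative estimate, highlighting that vague convergence $m_y \to \overline{m}$ on the basis of adapted boxes holds. If one wished to state this as a genuine weak-$*$ convergence on $C^0(\widehat{M}(\Gamma))$, one would additionally note that the adapted boxes generate the Borel $\sigma$-algebra of $M$ (as recorded earlier in the paper) and invoke a standard density argument; but for the convergence $m_y(C) \to \overline{m}(C)$ for a single box $C$, the one-line limit argument above suffices.
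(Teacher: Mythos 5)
Your deduction is correct and is exactly how the paper intends the corollary to be read: it follows immediately from Theorem \ref{Theorem 3.18} together with the elementary limit $y^{1/2}|\log y| \to 0$ as $y \to 0^{+}$. The paper gives no separate proof beyond this one-line extraction, so your argument matches the intended route.
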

We have proved everything in Theorem (\ref{Theorem 3.18}). The fact
there exists a constant $k$ such that $|K_C(y)| \leq k$ for all $0< y
\leq 1/2$ and every box $C' \subset C$ follows from
(\ref{equation23}).
\begin{theorem}
Let $C$ be ay adapted box. Then
\begin{equation}\label{equation24} \overline{\lim\limits_{y \to
\infty}}[|m_y(C) - \overline{m}_y(C)|y^{\alpha}]= + \infty
\end{equation} for all $\alpha > 1/2$.
\end{theorem}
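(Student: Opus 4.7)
The printed statement appears to contain typographical errors when compared with $(0.6')$ and Theorem~\ref{Theorem 3.18}: the intended limit point is $y\to 0$, the symbol $\overline{m}_y(C)$ is the constant $\overline{m}(C)$, and the factor $y^{\alpha}$ is $y^{-\alpha}$. With this reading the theorem is the box version of $(0.6')$ and the natural complement to Theorem~\ref{Theorem 3.18}. The plan is to obtain it by pushing Corollary~\ref{corollary3.15} through the dictionary already built in the proof of Theorem~\ref{Theorem 3.18}, with no new arithmetic input beyond what was used there.

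Concretely, with $y=e^{-t}$ and $\ell=e^{t/2}=y^{-1/2}$, formulas (\ref{1.1}) and (\ref{equation14}) give $m_y(C)=h\,y\,\hat{N}(\ell)$, where $h$ denotes the height of the box $C$. Using $A_h(\beta(C))=2A_e(\Delta(1))$ (the Jacobian of $\Psi$ has absolute value $2$) together with the homothety $A_e(\Delta(\ell))=\ell^2A_e(\Delta(1))=y^{-1}A_e(\Delta(1))$, the constant $\overline{m}(C)=(3/\pi^2)A_h(\beta(C))\,h$ can be rewritten so that
\[
\bigl|m_y(C)-\overline{m}(C)\bigr|=h\,A_e(\Delta(1))\left|\frac{\hat{N}(\ell)}{A_e(\Delta(\ell))}-\frac{6}{\pi^2}\right|.
\]
Multiplying both sides by $y^{-\alpha}=\ell^{2\alpha}$ yields
\[
\bigl|m_y(C)-\overline{m}(C)\bigr|\,y^{-\alpha}=h\,A_e(\Delta(1))\left|\frac{\hat{N}(\ell)}{A_e(\Delta(\ell))}-\frac{6}{\pi^2}\right|\ell^{2\alpha}.
\]

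For $\alpha>1/2$, set $\epsilon:=3/2-2\alpha$; then $\epsilon<1/2$ and $\ell^{2\alpha}=\ell^{3/2-\epsilon}$. Corollary~\ref{corollary3.15} is precisely the statement that this expression has $\varlimsup=+\infty$ as $\ell\to\infty$ for every such $\epsilon$. Since $\ell\to\infty$ is equivalent to $y\to 0$ and $h\,A_e(\Delta(1))$ is a strictly positive constant independent of $y$, the $\varlimsup$ on the horocycle side is also infinite, proving the claim.

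In this way the theorem is essentially a one-line corollary of Corollary~\ref{corollary3.15} once the bookkeeping change of variable $\ell=y^{-1/2}$ of Theorem~\ref{Theorem 3.18} has been inverted. All of the genuine work sits in Lemma~\ref{lemma3.7}, whose only arithmetic ingredient is the infinitude of primes: along integers $[\ell+1]$ that happen to be prime, $\hat{N}(\ell)$ jumps by an amount of order $\ell$ which the smooth main term $(6/\pi^2)A_e(\Delta(\ell))$ cannot absorb, and this defect survives multiplication by any $\ell^{1/2+\delta}$ with $\delta>0$. The main obstacle of the whole argument is thus not here but in Lemma~\ref{lemma3.7}; at the level of the present theorem there is nothing further to prove.
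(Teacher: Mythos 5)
Your reading of the misprints ($y\to 0$, $\overline{m}(C)$, and $y^{-\alpha}$) is correct, and your proof is essentially the paper's own: just before Theorem~\ref{Theorem 3.18} the paper records $\overline{\lim}_{t\to\infty}\bigl[\bigl(n(t)e^{-t}/A_h(\beta(C))-3/\pi^2\bigr)e^{\alpha t}\bigr]=+\infty$ for every $\alpha>1/2$ as a consequence of Corollary~\ref{corollary3.15}, and the present theorem is obtained from it by the substitution $y=e^{-t}$ together with (\ref{1.1}). Your bookkeeping via $\ell=y^{-1/2}$ and $A_e(\Delta(\ell))=\ell^{2}A_e(\Delta(1))$ is the same reduction, merely carried out in the trapezium coordinates of the corollary rather than in the hyperbolic-area form used in the paper.
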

\begin{theorem}\label{Theorem 3.22}
Let $f \colon M \to \mathbb{R}$ be any continuous function with compact
support. Then, there exists a positive constant $K(f)$, depending only
of $f$, such that
\begin{equation}\label{3.23}
|m_y(f) - \overline{m}(f)| \leq K(f)y^{1/2}|\log y|, \quad (0 < y \leq 1/2).
\end{equation}
Furthermore, if $\alpha>1/2$, then
\begin{equation}\label{3.24}
\varlimsup_{y\to
0}[|m_{y}(\chi_{U})-\overline{m}(\chi_{U})|y^{-\alpha}]=\infty.
\end{equation}
\end{theorem}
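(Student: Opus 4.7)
My plan is to prove the theorem in its two parts separately: the upper bound (\ref{3.23}) for continuous $f$, and the lower bound (\ref{3.24}) for $\chi_U$. Each is reduced to a statement already established for adapted boxes.

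The lower bound (\ref{3.24}) is essentially immediate. Since the adapted boxes form a basis for the topology of $M$, I choose $U$ to be any adapted box $C$. Equation (\ref{3.24}) for this $U$ is then exactly the conclusion of the theorem preceding Theorem \ref{Theorem 3.22}, applied to $C$. No additional work is required.

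For the upper bound (\ref{3.23}) the plan is to approximate $f$ by simple functions built from characteristic functions of adapted boxes, where Theorem \ref{Theorem 3.18} applies. Concretely, cover $\operatorname{supp}(f)$ by a finite collection $\{C_i\}_{i=1}^{N}$ of pairwise disjoint embedded adapted boxes all contained in a single larger adapted box $C_0$. Form the sandwiching simple functions
\begin{equation*}
s^{-} = \sum_{i=1}^{N}\bigl(\inf\nolimits_{C_i} f\bigr)\chi_{C_i} \;\leq\; f \;\leq\; \sum_{i=1}^{N}\bigl(\sup\nolimits_{C_i} f\bigr)\chi_{C_i} = s^{+}.
\end{equation*}
The crucial uniformity clause of Theorem \ref{Theorem 3.18}---that a single constant $k = k(C_0)$ bounds $|K_C(y)|$ simultaneously for every adapted box $C \subset C_0$---yields
\begin{equation*}
|m_y(s^{\pm}) - \overline{m}(s^{\pm})| \;\leq\; N\|f\|_{\infty}\,k\,y^{1/2}|\log y|.
\end{equation*}
Combining this with the sandwich $m_y(s^{-}) \leq m_y(f) \leq m_y(s^{+})$ and the analogous inequality for $\overline{m}$, I obtain
\begin{equation*}
|m_y(f) - \overline{m}(f)| \;\leq\; \overline{m}(s^{+} - s^{-}) + N\|f\|_{\infty}\,k\,y^{1/2}|\log y|,
\end{equation*}
where the approximation error is bounded by $\omega_f(\delta)\,\overline{m}(C_0)$ with $\delta = \max_i \operatorname{diam}(C_i)$ and $\omega_f$ the modulus of continuity of $f$.

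The main obstacle is making both terms on the right simultaneously controllable by $K(f)\,y^{1/2}|\log y|$: once the partition is fixed the approximation error is a positive constant, while $y^{1/2}|\log y|$ tends to zero. The natural remedy is to let the partition scale $\delta$ depend on $y$ and balance $\omega_f(\delta)\overline{m}(C_0)$ against $N(\delta)\|f\|_\infty k\,y^{1/2}|\log y|$; since $N(\delta)$ grows polynomially in $\delta^{-1}$, the balance succeeds cleanly when $f$ is Lipschitz (or H\"older), producing a $K(f)$ depending on a regularity constant of $f$, on $\|f\|_\infty$, and on $\overline{m}(C_0)$. For merely continuous $f$, I would instead use a layer-cake decomposition $f = \int_0^{\|f\|_\infty}\chi_{\{f>t\}}\,dt$, approximate each super-level set from inside and outside by unions of adapted boxes, and apply Theorem \ref{Theorem 3.18} level by level---absorbing the resulting constants into $K(f)$ at the end.
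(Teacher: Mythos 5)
Your overall strategy---sandwich $f$ between step functions supported on adapted boxes and invoke Theorem~\ref{Theorem 3.18} with its uniform constant---is essentially the same as the paper's proof, which covers $\operatorname{supp}(f)$ by adapted boxes and says ``the rest follows from Theorem~\ref{Theorem 3.18}.'' Your treatment of \eqref{3.24} (choosing $U$ to be an adapted box) is correct and is also what the paper does. What you have done, and the paper does not, is to make explicit the real obstruction in the first part: once a partition is fixed, the approximation error $\overline{m}(s^{+}-s^{-})$ is a positive constant independent of $y$, while the target bound $K(f)y^{1/2}|\log y|$ tends to zero.

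Unfortunately the remedies you sketch do not close that gap, and your claim that ``the balance succeeds cleanly when $f$ is Lipschitz'' is not correct. With mesh $\delta$ the number of boxes is $N(\delta)\asymp\delta^{-3}$ since $M$ is three-dimensional, so the two competing terms are $\omega_f(\delta)\overline{m}(C_0)$ and $\delta^{-3}\|f\|_\infty\,k\,y^{1/2}|\log y|$. For Lipschitz $f$ this is $L\delta$ versus $\delta^{-3}y^{1/2}|\log y|$; the minimum occurs at $\delta\asymp(y^{1/2}|\log y|)^{1/4}$ and yields a bound of order $(y^{1/2}|\log y|)^{1/4}$, which is far weaker than the claimed $y^{1/2}|\log y|$. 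The layer-cake variant inherits the same defect: the super-level sets of a continuous $f$ are not finite unions of adapted boxes, and approximating their boundaries by boxes again costs a fixed positive amount that does not shrink with $y$. What would be needed---and what neither your proposal nor the paper establishes---is that the error in Theorem~\ref{Theorem 3.18} is proportional to the size of the box (so that the sum over a partition is controlled by $\operatorname{vol}(C_0)$, not by $N$). The estimate there is a uniform constant $k$ for all sub-boxes of $C_0$, and the triangle inequality over $N$ boxes is therefore too lossy. To repair the argument one would have to exploit cancellation directly, for instance by running the lattice-point count of Theorem~\ref{mertens} over a union of trapezia and bounding the error by the total perimeter rather than summing the individual box errors. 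As written, the extension of \eqref{3.23} from a single adapted box to a general continuous $f$ of compact support is not justified in your proposal, nor in the paper itself.
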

\begin{proof}
(\ref{3.24}) is a direct consequence of (\ref{equation24}). For
(\ref{3.23}) one proceeds as follows: Let $\mathcal{B} = \text{supp}(f)$. Let
 $V_i = \{C_{1,i}, C_{2,i}, ... C_{n_i,i}
\}\ i = 1,2,...$ be a sequence of finite coverings of $\mathcal{B}$ by
adapted boxes. Let $F_i= \{g_1^i,...g_{n_i}^i \}$ be a smooth partition
of unity subordinated to $V_i(i = 1,2,...)$. For $i > 1$ suppose that
the maximum diameter of the boxes in $V_i$ is less than a Lebesgue
number for the covering $V_{i-1}$. If we consider $\{
g_1^i,...g_{n_i}^i \}$ we see that we can uniformly approximate $f$ by
a finite sum of functions which are constant on a box and zero outside
that box. The rest follows from Theorem (\ref{Theorem 3.18}).
\end{proof}

\section{APPENDIX A}
\textbf{The Ranking-Selberg Method and the Mellin transform of }
$\overline{m}_y$. 

In all that follows we will borrow from \cite{Za}
and \cite{Sa}.
\par Let $f \colon M \to \mathbb{R}$ be any $C^{\infty}$
function with compact support. Consider for $s \in \mathbb{C}$ with
$\mathfrak{R}(s)> 1$ the Mellin-type transform
\begin{equation*}
\langle E(s), f \rangle \coloneqq G_f(s) \coloneqq
\int_{0}^{\infty}m_y(f)y^{s-1} \frac{dy}{y}. 
\end{equation*}
\par We may think of $f$ as a function $f \colon T_1 \mathbb{H} \to \mathbb{R}$
which is $\Gamma$-invariant ($\Gamma = \text{PSL}(2, \mathbb{Z})$). Then 
\begin{equation*}
\langle m_y, f \rangle \coloneqq m_y(f) = \int_{0}^{1}f(x,y,0)dx
\end{equation*}
where $x$, $y$ and $\theta$ are the parameters of $\text{SL}(2,\mathbb{R})$
described before. 
\par Let $K_f = \text{sup}\{ \mathfrak{F}(s)| (z, \theta) \in
\text{supp}(f)  \}$. Then if $\mathfrak{R}(s) > 1$: 
\begin{equation}\label{equation26}
|G_f(s)| \leq \| f\|_{\infty} \frac{(K_f)^{\sigma - 1}}{\sigma - 1} \;
\sigma = \mathfrak{R}(s). 
\end{equation}
Therefore, we see that
\begin{itemize}
\item [(i)] The integral defining $G_f(\cdot)$ converges absolutely
  and uniformly in $\mathfrak{R}(s) > 1 + \epsilon$. Therefore $G_f(\;)$
  is holomorphic in $\mathfrak{R}(s) > 1$.
\item[(ii)] Because of inequality (\ref{equation26}) it follows that
\begin{equation*}
E(\cdot) \colon \{ \mathfrak{R}(s) > 1 \} \rightarrow [C_c^{\infty}]^*
= \mathcal{D}(M)
\end{equation*}
defines a weakly holomorphic function with values in the distribution
space, $\mathcal{D}(M)$, of $M$. In fact, (4.1) implies that for all $s_0$,
with $\mathfrak{R}(s_0) > 1$, $E(s_0)$ is complex valued infinite
measure concentrated in $P(L^+)$ and invariant under $h_t^+$. Where
$h_t^+$ acts on distributions $\mathcal{S}$ as follows:
\begin{equation*} \langle h_t^+ \mathcal{S}, f \rangle = \langle \mathcal{S}, f
\circ h_{-t}^+ \rangle, \; f\in C_c^{\infty}(M), t \in \mathbb{R}.
\end{equation*}
\item[(iii)] For every $f\in C_c^{\infty}(M)$, $G_g(\cdot)$ can be
  extended as a meromorphic function to all of $\mathbb{C}$ with no
  singularities in $\mathfrak{R}(s) > 1/2$ except for a simple pole at
  $s=1$ with residue
\begin{equation*}
\text{Res}(G_f(s))\big|_{s=1} = \overline{m}(f).
\end{equation*}
\item[(iv)] The growth on vertical lines $t \mapsto \sigma + it$ is
  controlled by the growth of
\begin{equation*}
\phi(s)= \frac{\pi^{1/2} \Gamma (s - 1/2)\zeta(2s
-1)}{\Gamma(s)\zeta(2s)}. 
\end{equation*}
\item[(v)] $E(\cdot) \colon \mathbb{C} \to \mathcal{D}(M) \cup \infty$ has the
property that $E(s)$ is a distribution of finite order ($s$ not a
pole); $E(S)$ satisfies the following functional equation:  
\begin{equation*}
E(s) = \mathcal{H}(s, \cdot) * E(1 -s)
\end{equation*}
where * denotes convolution and
\begin{equation*}
\mathcal{H}(s, \theta) = \left[ \frac{\Gamma(s)}{\pi^{1/2}\Gamma(s -1/2)}
  (\sin\theta/2)^{2s - 2} \right]\phi(s) = (\sin\theta/2)^{2s - 2}
\frac{\zeta(2s -1)}{\zeta(2s)}
\end{equation*}
and $\mathcal{H}(s, \cdot)$ acts on $2\pi$-periodic vector functions by
convolution in the $\theta$ variable.
\item [(vi)] Let $\mathfrak{a} = \text{sup}\{\mathfrak{R}(\rho);
  \zeta(\rho)= 0 \}$. Then, by Mellin inversion formula, we have for
  every $f \in C_{c}^{\infty }(M)$:
\begin{equation*}
m_y(f) = \frac{1}{2\pi i}\int_{-\frac{1}{2} -i \infty }^{-\frac{1}{2}
  + i \infty}G_f(s + 1)y^{-s}ds
\end{equation*}
(the validity of this inversion is shown in \cite{Sa}).
\end{itemize}
Changing variables and classical growth estimates of $\zeta(s)$,
$\Gamma(s)$ (Titchmarsh formulae (14.25) (14.2.6) p.283 \cite{Ti} we
have the validity of the following inversion:
\begin{equation}\label{equation27}
m_y(f) = \frac{1}{4 \pi i} \int_{\alpha -i \infty}^{\alpha + \infty}
G_f(\frac{s}{2})y^{1 - \frac{s}{2}}, \quad \text{ For } \alpha >
\mathfrak{a}.
\end{equation}
\par The right-hand side of (\ref{equation27}) is a superposition of
functions which are $\text{O}(1 - \frac{\mathfrak{a}}{2} - \epsilon)$
for all $\epsilon > 0$ ($y \to 0$) hence, $m_y(f)$ has the same order
for all $f \in C_{c}^{\infty }(M)$. This is the connection with the
Riemann Hypothesis found by Zagier. To prove the Riemann Hypothesis it
is enough to find a $C^2$ function, $f$, with compact support in $M$
such that the error term $|m_y(f) - \overline{m}_y(f)|$ can be made
$o(y^{3/4 - \epsilon})$ for all $\epsilon > 0$ and whose Mellin
transform has no zeroes on the critical strip.\\

\textbf{Eisenstein Series}. Let $C^{\downarrow}(S(\Gamma))$ denote the
space of functions which decay very rapidly as the argument of the
function approaches the cusp. Namely: the space of functions $f \colon
\mathbb{H} \to \mathbb{C}$ which are $\Gamma$-invariant and such that $f(x + iy)
= O(y^{-N})$ for all $N$. Then, since $f(x + iy)$ is periodic of period
one in the $x$-variable we may develop it into a Fourier series     
\begin{equation*}
f(x + iy) = \sum_{n \in \mathbb{Z}}\hat{f}_n(y)e^{2 \pi i n x}, \quad (y >
0).
\end{equation*}
Let $C(f,y) = \int_{0}^{1}f(x + iy)dx$ denote the constant term of its
Fourier expansion:
\begin{equation*}
C(f,y) = \hat{f}_0(y).
\end{equation*}
\par Let $\mathcal{M}(f,s)$ be the Mellin transform of $C(f,y)$:
\begin{equation*}
\mathcal{M}(f,s) = \int_{0}^{\infty}C(f,y)y^{s-1}\frac{dy}{y}, \quad
(\mathfrak{R}(s)>1). 
\end{equation*}
\par Using the fact $f$ is $\Gamma$-invariant and the fact that for
$\gamma \in \Gamma$ we go from a fundamental domain to the standard
domain, we have:
\begin{eqnarray*}
 \mathcal{M}(f,s) &=& \int_{0}^{\infty}\int_{0}^{1}C(f,y)y^{s-1}\frac{dy}{y}
 = \int_{0}^{\infty}\int_{0}^{1} f(x + iy)y^{s}\frac{dx \; dy}{y^2}
 \\ &=& \int_{\mathbb{H}/\Gamma}f(x)E(z,s)dz =
 \int_{S(\Gamma)}f(x)E(z,s)dz \quad (dz = \frac{dx dy}{y^{2}}).
\end{eqnarray*}
Hence
\begin{equation*}
\mathcal{M}(f,s) = \int_{S(\Gamma)}f(x)E(z,s)dz.
\end{equation*}
With this formula we see that $\mathcal{M}(f,s)$ enjoys the same properties
of $E(z,s)$:
\begin{itemize}
\item [(i)] $\mathcal{M}(f,s)$ has a meromorphic continuation to all of
  $\mathbb{C}$. It has a simple pole at $s = 1$.
\item [(ii)] $\text{Res}_{s=1} \mathcal{M}(f,s) =
\frac{3}{\pi}\int_{S(\Gamma)}f(u)du$. 
\item[(iii)] $M^*(f,s) \coloneqq \pi^{-s}\Gamma(s)\zeta(2s)\mathcal{M}(f,s)$
  is regular in $\mathbb{C} -\{ 0,1 \}$ and it satisfies the functional
  equation
\begin{equation*}
\mathcal{M}^*(f,s) = \mathcal{M}^*(f,1-s).
\end{equation*}

\end{itemize}
$E(z,s)$ is an Eisenstein Series:
\begin{equation*}
E(z,s) = \sum_{\gamma \in \Gamma_{\infty}/\Gamma}\mathfrak{F}(\gamma
z)^s = \frac{1}{2} \sum_{\substack{c,d \in \mathbb{Z} \\ \{ c,d \}=1}}
\frac{y^s}{|cz + d|^{2s}}
\end{equation*}

\section{APPENDIX B}

\textbf{Discrete measures and the Riemann-Hypothesis}. We refer to \cite{Ve} for this appendix. Many of the facts of this paper can be reduced to the study of discrete measures in the multiplicative group of the positive reals. 
Let $\mathbb R^\bullet=\left\{ y\in\mathbb R\,:\, t>0 \right\}$. Let $f:\mathbb N\to\mathbb C$  be any arithmetic function. For $y\in\mathbb R^\bullet$ consider the distribution $\nu_y:\text{C}^\infty_0(\mathbb R^\bullet)\to \mathbb C$, where $\text{C}^\infty_0(\mathbb R^\bullet)$ is the space of smooth functions with compact support,
defined by the formula:
$$
\nu_{y,f}=\underset{n\in \mathbb N}\sum\,f(n)\delta_{ny},
$$

\noindent where $\delta_x$ is the Dirac measure at the point $x\in\mathbb R^\bullet$, \emph{i.e.,}
$$
\nu_{y,f}(g)=\underset{n\in \mathbb N}\sum\,f(n)g(ny),\quad\quad g\in{\text{C}^\infty_0(\mathbb R^\bullet)}
$$

\noindent The Mellin transform
of $\nu_{y,f}(g)$ as a function of $y\in \mathbb R^\bullet$
$$
M_g(s)=\int_0^\infty\,y^{s-1}\nu_{y,f}(g)dy
$$

\noindent can be used to study the behavior of  $\nu_{y,f}(f)$ as $y\to0$. This is of course related to arithmetic since
the Mellin transform of $\nu_y$ involves the Mellin transform of $f$
and the Dirichlet series
$$
\underset{n\in \mathbb N}\sum\,\frac{f(n)}{n^s}
$$

\noindent For instance in \cite{Ve} there is the following:
\begin{theorem} Let $f$ above be Euler totient function $\varphi(n)$ that counts the number of integers lesser or equal to $n$ which are relatively prime to the integer $n$. Then,
\begin{enumerate}
\item For all $g\in{\text{C}^\infty_0(\mathbb R^\bullet)}$ 
$$
y^2\nu_{y,\varphi}(g)=\int_o^\infty\,\frac{6}{\pi^2}ug(u)\,du+ o(y)\quad \text{as}\,\, y\to0
$$ 
\item The Riemann hypothesis is true if and only if for for all $g\in{\text{C}^\infty_0(\mathbb R^\bullet)}$ 
$$
y^2\nu_{y,\varphi}(g)=\int_o^\infty\,\frac{6}{\pi^2}ug(u)\,du+ o(y^{\frac32-\epsilon})\quad \text{as}\,\, y\to0
\quad \forall \epsilon>0
$$ 

\end{enumerate}

\end{theorem}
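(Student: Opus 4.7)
The plan is to compute the Mellin transform of $y\mapsto \nu_{y,\varphi}(g)$ and invert, extracting the main term from the residue at $s=2$ and controlling the remainder via the analytic behavior of $\zeta(s-1)/\zeta(s)$. For $\mathrm{Re}(s)$ large, Fubini and the substitution $u=ny$ give
\[
M(s):=\int_{0}^{\infty}y^{s-1}\nu_{y,\varphi}(g)\,dy \;=\; \Bigl(\sum_{n\ge 1}\frac{\varphi(n)}{n^{s}}\Bigr)\tilde g(s) \;=\; \frac{\zeta(s-1)}{\zeta(s)}\,\tilde g(s),
\]
where $\tilde g(s)=\int_{0}^{\infty}u^{s-1}g(u)\,du$ is entire and of super-polynomial decay on vertical strips because $g\in C^{\infty}_{0}(\mathbb{R}^{\bullet})$ has compact support bounded away from $0$ and $\infty$. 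The classical Dirichlet identity $\sum \varphi(n)/n^{s}=\zeta(s-1)/\zeta(s)$ justifies the factorisation, and Mellin inversion yields
\[
\nu_{y,\varphi}(g) \;=\; \frac{1}{2\pi i}\int_{(c)} y^{-s}\,\frac{\zeta(s-1)}{\zeta(s)}\,\tilde g(s)\,ds, \qquad c>2.
\]

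Next I shift the contour to the left of $s=2$. Between $\mathrm{Re}(s)=1$ and $\mathrm{Re}(s)=2$ the only singularity crossed is the simple pole of $\zeta(s-1)$ at $s=2$ with residue $1$; its contribution is
\[
\frac{\tilde g(2)}{\zeta(2)}\,y^{-2} \;=\; \frac{6}{\pi^{2}\,y^{2}}\int_{0}^{\infty}u\,g(u)\,du,
\]
which after multiplication by $y^{2}$ furnishes the claimed main term. For part (1), I then push the contour slightly past $\mathrm{Re}(s)=1$ into the de la Vall\'ee Poussin zero-free region $\mathrm{Re}(s)\ge 1-c/\log(|t|+2)$, using the convexity bound $\zeta(s-1)=O(|t|^{1/2})$ on $\mathrm{Re}(s)=1$, the estimate $1/\zeta(s)=O((\log|t|)^{A})$ there, and the rapid decay of $\tilde g$ on vertical lines. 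The remaining integral is then $o(y^{-1})$, so $y^{2}$ times the error is $o(y)$.

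For the forward direction of (2), if RH holds then $\zeta(s)\ne 0$ in $\mathrm{Re}(s)>1/2$, and I may shift the contour to $\mathrm{Re}(s)=\tfrac12+\varepsilon$ without crossing any new singularity. The factor $|y^{-s}|=y^{-1/2-\varepsilon}$, together with polynomial bounds for $\zeta(s-1)/\zeta(s)$ on that line counteracted by the rapid decay of $\tilde g$, yields a remainder $O(y^{-1/2-\varepsilon})$; multiplying by $y^{2}$ gives the desired $o(y^{3/2-\varepsilon})$. The converse implication is the main obstacle: from the estimate $y^{2}\nu_{y,\varphi}(g)-\frac{6}{\pi^{2}}\int ug=o(y^{3/2-\varepsilon})$ valid for \emph{every} $g\in C^{\infty}_{0}(\mathbb{R}^{\bullet})$, I must deduce that $\zeta(s-1)/\zeta(s)$ has no poles in $\mathrm{Re}(s)>1/2$. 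The plan is to take Mellin transforms of both sides: the hypothesised bound on the remainder forces its Mellin transform to be holomorphic in $\mathrm{Re}(s)>1/2$, and by varying $g$ so that $\tilde g$ ranges over a sufficiently rich Paley--Wiener family (whose zeros can be made to avoid any prescribed point), one concludes that $\zeta(s-1)/\zeta(s)$ is itself holomorphic in that half-plane, whence the nontrivial zeros of $\zeta$ all lie on $\mathrm{Re}(s)=1/2$. Making this extraction rigorous, while controlling the admissible Paley--Wiener class of test functions, is the delicate step.
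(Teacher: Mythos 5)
The paper does not actually contain a proof of this theorem; the entire appendix is deferred to the reference \cite{Ve}. What the paper does provide, in the paragraph preceding the statement, is exactly the framework your proposal uses: the Mellin transform $M_g(s)=\int_0^\infty y^{s-1}\nu_{y,\varphi}(g)\,dy$ factoring into the Mellin transform $\tilde g$ of the test function and the Dirichlet series $\sum_n\varphi(n)n^{-s}=\zeta(s-1)/\zeta(s)$. Your argument therefore follows the intended route, and its essentials are correct: the residue at $s=2$ gives the main term, the contour shift into the de la Vall\'ee Poussin zero-free region gives part (1), the shift to $\mathrm{Re}(s)=\tfrac12+\varepsilon$ under RH gives the forward direction of part (2), and the super-polynomial decay of $\tilde g$ on vertical strips (because $\mathrm{supp}\,g$ is compact in $(0,\infty)$) absorbs the polynomial growth of $\zeta(s-1)/\zeta(s)$.

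The step you flag as delicate, the converse of part (2), does close, and more simply than your Paley--Wiener remark suggests. Split $M_g(s)=\int_0^1+\int_1^\infty$. The tail $\int_1^\infty$ is entire: since $\mathrm{supp}\,g\subset(0,\infty)$ is compact, $\nu_{y,\varphi}(g)=\sum_{n\geq1}\varphi(n)g(ny)$ vanishes identically once $y>\sup(\mathrm{supp}\,g)$. In $\int_0^1$, subtract the main term $\tfrac{6}{\pi^2y^2}\tilde g(2)$; this produces the simple pole $\tfrac{6\tilde g(2)}{\pi^2(s-2)}$ plus an integral of a remainder which is $o(y^{-1/2-\varepsilon})$ for every $\varepsilon>0$ by hypothesis, hence holomorphic on $\mathrm{Re}(s)>1/2$. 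Consequently $\zeta(s-1)\tilde g(s)/\zeta(s)$ extends holomorphically to $\{1/2<\mathrm{Re}(s)<2\}\setminus\{2\}$. Now if $\zeta(\rho)=0$ with $1/2<\mathrm{Re}(\rho)<1$, then $1/\zeta$ has a pole at $\rho$, while $\zeta(\rho-1)\neq0$ because $\rho-1$ lies in $-1/2<\mathrm{Re}<0$, which contains no zeros of $\zeta$; so $\tilde g(\rho)=0$ would be forced for every admissible $g$. But $g\mapsto\tilde g(\rho)=\int_0^\infty u^{\rho-1}g(u)\,du$ is a nonzero linear functional on $C_0^\infty(\mathbb R^\bullet)$ (take a bump concentrated near $u=1$), a contradiction. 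Together with the classical non-vanishing of $\zeta$ on $\mathrm{Re}(s)\geq1$, $s\neq1$, and the functional equation, this yields RH. Your proposal is thus correct, with the acknowledged gap admitting the short resolution above.
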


\noindent\textit{Remark.} The author has found an interesting
connection with Hurwitz zeta function by considering horocycle
measures concentrated on equally spaced closed horocycles approaching
the cusp. Also one could give a geometric proof of the theorem of
Franel-Landau.

\section*{Acknowledgements}
\par I would like to thank S. G. Dani, E.
Ghys, S. Lopez de Medrano, C.  McMullen, B. Randol, D. Sullivan.
Finally I would like to thank  Dalid Rito Rodr\'iguez,  Antonia S\'anchez Godinez y Jos\'e Juan Zacar\'ias
for having typed the present version.

\bibliographystyle{plain}

\end{document}